\providecommand{\abs}[1]{\left|#1\right|}
\providecommand{\norm}[1]{\left \| #1\right \|}
\newcommand{\real}{\mathbb{R}}
\newcommand{\n}{\mathbb{N}}
\newcommand{\rN}{ {\mathbb{R}^N} }
\numberwithin{equation}{section}
\newtheorem{teo}{Theorem}[section]
\newtheorem{lem}[teo]{Lemma}
\newtheorem{remark}[teo]{Remark}
\newtheorem{defi}[teo]{Definition}
\newtheorem{cor}[teo]{Corollary}
\newtheorem{prop}[teo]{Proposition}
\newtheorem{claim}[teo]{Claim}
\def\calc{\mathcal{C}}
\def\mm{{{\cal M}^-}} 
\begin{document}

\title{\bf\Large A priori estimates and multiplicity for  systems of elliptic PDE with natural gradient growth}
\author[1]{Gabrielle Nornberg\footnote{gabrielle@icmc.usp.br, supported by Fapesp grant 2018/04000-9, São Paulo Research Foundation.}}
\author[2]{Delia Schiera\footnote{d.schiera@uninsubria.it}}
\author[3]{Boyan Sirakov\footnote{bsirakov@mat.puc-rio.br}}
\affil[1]{\small Instituto de Ciências Matemáticas e de Computação, Universidade de São Paulo, Brazil}
\affil[2]{\small Università degli Studi dell'Insubria, Italy}
\affil[3]{\small Pontifícia Universidade Católica do Rio de Janeiro, Brazil}
\date{}

\maketitle

\vspace{-1.5cm}

\begin{center}
\textit{Dedicated to Professor Wei-Ming Ni with admiration}
\end{center}

\smallskip

{\small\noindent{\bf{Abstract.}} We consider fully nonlinear uniformly elliptic cooperative systems with quadratic growth in the gradient, such as
$$
-F_i(x, u_i, Du_i, D^2 u_i)- \langle M_i(x)D u_i, D u_i \rangle =\lambda c_{i1}(x) u_1 + \cdots + \lambda c_{in}(x) u_n +h_i(x),
$$
for $i=1,\cdots,n$, in a bounded $C^{1,1}$ domain $\Omega\subset \mathbb{R}^N$ with Dirichlet boundary conditions; here $n\geq 1$, $\lambda \in\real$, $c_{ij},\, h_i \in L^\infty(\Omega)$,  $c_{ij}\geq 0$,  $M_i$ satisfies $0<\mu_1 I\leq M_i\leq \mu_2 I$, and $F_i$ is an uniformly elliptic Isaacs operator.

We obtain uniform a priori bounds for systems, under a weak coupling hypothesis that seems to be optimal. As an application,
we also establish existence and multiplicity results for these systems, including a branch of solutions which is new even in the scalar case.}

\medskip

{\small\noindent{\bf{Keywords.}} {A priori estimates; Elliptic system; Multiplicity; Existence and nonexistence.}

\medskip

{\small\noindent{\bf{MSC2010.}} {35J47, 35J60, 35J66, 35A01, 35A16, 35P30.}

\section{Introduction}
In this paper we study the following system of fully nonlinear uniformly elliptic equations
\begin{equation}\label{Plambda}\tag{$P_{\lambda}$}
\left\{
\begin{array}{rclc}
	-F_i(x, Du_i, D^2 u_i)-\braket{M_i(x) Du_i, Du_i} & =& \lambda\displaystyle\sum_{j=1}^n  c_{ij}(x) u_j + h_i(x) & \text{ in } \Omega \\
u_1=\cdots=u_n&=&0 & \text{ on } \partial \Omega
\end{array}
\right.
\end{equation}
where $\Omega$ is a bounded $C^{1,1}$ domain in $\mathbb{R}^N$, $\lambda \in \mathbb{R}$, $n,N \ge 1$, $c_{ij}, h_i \in L^\infty(\Omega)$, and $M_i$ is a bounded nondegenerate matrix. Scalar product is denoted with $\braket{\cdot, \cdot}$.
We assume $c_{ij}\ge 0$ in $\Omega$, which means that the system is noncoercive and cooperative when $\lambda>0$. The latter is a parameter which measures the size of the zero order matrix $\mathcal{C}=(c_{ij})_{i,j=1}^n $.

A very particular case, for which our results are new as well, is when each $F_i$ is the Laplacian; $F_i$ can also be a linear operator in nondivergence form
$F_i(x,Du,D^2 u)= \textrm{tr} (A_i(x)D^2 u) + \braket{b_i(x),D u}$, or it can even have a fully nonlinear structure as an Isaacs operator.
We note that nondivergence fully nonlinear equations with natural growth are particularly relevant for applications, since problems with such growth in the gradient are  abundant in control and game theory, and more recently in mean-field problems, where Hamilton-Jacobi-Bellman and Isaacs operators appear  as infinitesimal generators of the underlying stochastic processes. We refer to Section 2 of \cite{BuscaSirakov} for more on applications of this type of systems.

It is notable that the two terms in the left-hand side of  \eqref{Plambda} have the same scaling with respect to dilations, so the second order term is not dominating when we zoom into a given point. This type of gradient dependence is usually named ``natural'' in the literature, and is the object of extensive study. Another important property of \eqref{Plambda} is the invariance of this class of systems with respect to diffeomorphic  changes of variable, in $x$ or $u$.\medskip

We start with a  brief review of the literature for scalar equations ($n=1$).  It is known that the sign of $\lambda$ dramatically influences the solvability and properties of the solution set of \eqref{Plambda}. For the so-called strictly coercive case $\lambda c(x)<<0$,
existence and uniqueness when $F$ is in divergence form goes back to the works \cite{BBGK, BM, BMP1, BMP2, KK78}.
 However, in the case of weakly coercive equations (say, $\lambda=0$) existence and uniqueness can be proved only under a smallness assumption on $c$ and $M$, as was first observed in \cite{FM}.
These works use the weak integral formulation of the equation.

The third author showed in \cite{arma2010} that the same type of existence and uniqueness results can be proved for general coercive equations in nondivergence form, by using techniques based on the maximum principle. In that paper it was also observed, for the first time and with a rather specific example with the Laplacian, that the solution set can be very different in the ``noncoercive" case $\lambda c>0$, and in particular more than one solution may appear. It was also conjectured in that paper that a refined analysis should be doable in order to embrace more general structures.

In the last few years appeared several papers which unveil the complex nature of the solution set for noncoercive equations, in the particular case of the Laplacian -- see \cite{ACJT, CJ, JB, Souplet}. In all these works the crucial a priori bounds for $u$ in the $L^\infty$-norm rely on the fact that the second order operator is the Laplacian, or a divergence form operator.

In \cite{jfa19} we obtained similar results for general operators in nondivergence form, by using different techniques adapted to such operators. In particular, the conjectures in \cite{arma2010} for noncoercive equations were established through a new method of obtaining a priori bounds in the uniform norm.
The method is based on some standard estimates from regularity theory, such as half-Harnack inequalities, and their recent boundary extensions in \cite{B2016}, in addition to a Vázquez strong maximum principle; see also \cite{Sirakov19} for an extensive description of the method.

However, up to our knowledge, nothing was known about systems with natural gradient growth. This is what this work is devoted to, complement and extend the results in \cite{jfa19} to the context of systems of the form \eqref{Plambda}.
We develop a machinery to obtain the crucial a priori bounds for the system \eqref{Plambda} via a nondegeneracy hypothesis on the matrix $\calc(x)$ that seems to be optimal. In combination with these estimates we also exploit a Fredholm theory for fully nonlinear operators with unbounded weight, which turns out to be an important tool in investigating existence and multiplicity of solutions.

It is worth noting that general systems as \eqref{Plambda} do not have variational characterization even if the second order operators $F_i$ are in divergence form, such as the Laplacian; so variational methods do not apply to such systems.
\medskip

The paper is organized as follows. The next section contains the statements of our results. In the preliminary section \ref{Preliminaries} we recall some known results that will be used throughout the text.
Section \ref{section a priori} is devoted to the proofs of the a
priori bounds in the uniform norm for solutions of the noncoercive problem \eqref{Plambda}.
In Section \ref{section multiplicity} we sketch the proof of our existence and multiplicity results, which resemble to the scalar case \cite{jfa19} after some appropriate changes.
Section \ref{section scalar}, in turn, consists of a multiplicity result which is new even for single equations in nondivergence form, see Theorem \ref{th cite introd}. It is based on a version of the anti-maximum principle, proven in section \ref{section eigenvalue} together with some tools involving eigenvalues.

\section{Main Results}\label{main results}

We assume that the matrices $M_i$ satisfy the nondegeneracy condition
\begin{align}\label{M}
\tag{$M$}  \mu_1 I \leq M_i(x)\leq \mu_2 I\;\;\;\mathrm{a.e.\;\; in\;} \Omega
\end{align}
for some $\mu_1, \mu_2>0$, and that $F_i$ in \eqref{Plambda} has the following structure
\begin{equation} \label{SC}
\tag{$SC$} \begin{cases}
F_i(x,0,X)\;\mbox{ is continuous in }\;x\in \overline{\Omega},\\
\mm (X-Y)-b|\vec{p}-\vec{q}| \leq F_i(x,\vec{p},X) - F_i(x,\vec{q},Y)
\leq \mathcal{M}^+ (X-Y)+b|\vec{p}-\vec{q}|
\end{cases}
\end{equation}
for a.e. $x\in\Omega$, where $b\ge0$ and $\mm$, $\mathcal{M}^+$ are the Pucci extremal operators (see the next section) with constants $0<\lambda_P\le\Lambda_P$. For simplicity, the reader may think that each $F_i[u]= F_i(x,Du,D^2 u)$ is in one of the following forms
\begin{equation} \label{models}
\textrm{tr} (A_i(x)D^2 u) + \braket{b_i(x), D u}  \quad\mbox{ or } \quad \mathcal{M}^{\pm}_{\lambda_P,\Lambda_P}(D^2u) \pm b_i (x)|Du|
\end{equation}
where $A_i$ are continuous matrices whose spectrum is in $[\lambda_P,\Lambda_P]$, and $b_i$ are bounded vector functions.  Only at the expense of trivial technicalities we can consider more general operators as in \cite{jfa19}, with zero order terms, and coefficients $b_i, c_{ij}, h_i$ belonging to $L^p$, $p>N$. We prefer to avoid such technicalities here, in order to concentrate on what is new due to the presence of a system rather than a scalar equation.

Solutions of the Dirichlet problem \eqref{Plambda} are understood in the $L^p$-viscosity sense (see Definition \ref{def Lp-viscosity sol} below) and belong to $C(\overline{\Omega})$, so are bounded.
We also use the notion of strong solutions, which are functions in $W^{2,p}_{\mathrm{loc}} (\Omega)$ satisfying the equation almost everywhere.
Strong solutions are viscosity solutions, \cite{KSweakharnack}.
Conversely, it follows from the regularity results in \cite{regularidade} that, if the operator $F_i$  has  property \eqref{Hstrong} below, then viscosity solutions are strong. Hypothesis \eqref{SC} guarantees that the $L^p$-viscosity solutions of \eqref{Plambda} have global  $C^{1,\alpha}$ regularity and estimates, by \cite{regularidade}.

\smallskip

We denote  $F [u]:=(F_1[u_1],\cdots,F_n[u_n]\,)$, $u=(u_1,\cdots,u_n)$, $f=(f_1,\cdots,f_n)$, fix $p>N$, and consider the Dirichlet problem
\begin{align}\label{Fif}
-F[u]=f(x)\;\; \textrm{ in } \Omega, \quad u=0 \;\;\textrm{ on } \partial\Omega.
\end{align}
The model operators in \eqref{models} have the following properties.
\begin{align}\label{ExistUnic M bem definido}
\textrm{For each $f \in L^p(\Omega)^n$, there exists a unique $L^p$-viscosity solution of }  \eqref{Fif}. \tag{$H_1$}
\end{align}
\begin{align} \label{Hstrong} \tag{$H_2$}
\textrm{For each $f\in L^p (\Omega)^n$, any solution $u$ of \eqref{Fif} belongs to $W^{2,p} (\Omega)^n$.}
\end{align}
More generally, operators satisfying \eqref{SC} and convex/concave in the Hessian matrix satisfy \eqref{ExistUnic M bem definido}--\eqref{Hstrong},  by \cite{CCKS, regularidade, Winter}.
We stress that \eqref{Hstrong} above implies $(H_2)$ from \cite{jfa19} in the scalar case, by the proof of the $W^{2,p}$  regularity in \cite{regularidade}.

\smallskip

Since we want to study the way the nature of the solution set changes when we go from negative to positive zero order term (i.e.\ from $\lambda<0$ to $\lambda>0$), we will naturally assume that the problem with $\lambda =0$ has a solution.
\begin{align}\label{H0}
\tag{$H_0$} \textrm{The problem (}P_0\textrm{) has a strong solution }u_0=(u_1^0,\cdots , u_n^0) .
\end{align}

Theorem 1(ii) of \cite{arma2010} ensures \eqref{H0} for instance if $\mu_2h_i$ has small $L^p$-norm for each $i$ (notice that $(P_0)$ is a system of $n$ uncoupled equations, hence Theorem 1 of \cite{arma2010} applies to each of these equations separately).
Examples showing that in general this hypothesis cannot be removed are also found there. The function $u_0$ is the unique $L^p$-viscosity solution of $(P_0)$, by Theorem 1(iii) of \cite{arma2010}.\medskip

We use the following order in the space $E:=C^1(\overline{\Omega})^n$.
\begin{defi}\label{def1.2}
Let $u=(u_1,\cdots,u_n)$, $v=(v_1,\cdots, v_n) \in E$. We denote $u\leq v$ in $\Omega$ to mean $u_i\leq v_i$ in $\Omega$ for all $i=1,\cdots,n$.
Also, we say that $u\ll v$ if, for all $i\in\{1,\dots, n \}$, $u_i<v_i$ in $\Omega$, and for any $x_0\in\partial\Omega$ we have either $u_i(x_0)<v_i(x_0)$, or $u_i(x_0)=v_i(x_0)$ and $\partial_\nu u_i (x_0)<\partial_\nu v_i (x_0)$, where $\vec{\nu}$ is the interior unit normal to $\partial \Omega$.

We also write $u \leq C$ $(\ge C)$ to mean $u_i \leq C$ $($respectively, $\ge C)$ for any $i=1,\cdots,n$.
\end{defi}

\smallskip

As in any study of systems of equations, it is essential to determine the coupling of the system, that is, the way each of the equations influences each of the components of the vector $u$. A {\it fully coupled} system is one which cannot be split into two subsystems such that one of which does not depend on the other. In our context, \eqref{Plambda} would be fully coupled if the matrix $\calc$ is irreducible, in the sense that for each nonempty $I,J\subset\{1,\ldots,n\}$, $I\cap J=\emptyset$, $I\cup J=\{1,\ldots,n\}$ there exist $i\in I$, $j\in J$, such that $c_{ij}(x)\gneqq 0$ in $\Omega$.

Every matrix $\mathcal{C}=(c_{ij})_{i,j=1}^n $ can be written in the block triangular form
\begin{equation}\label{blocktriangularform}
\mathcal{C}(x)=(\calc_{kl} (x))_{\,k, l=1}^{\,{n^\prime}},
\end{equation}
where $1 \le {n^\prime} \le n$, $\calc_{kl}$ are $t_k \times t_l$ matrices, $\sum_{k=1}^{n^\prime} t_k=n$,  $\calc_{kk}$ is irreducible for each $k=1,\ldots, n^\prime$, and $\calc_{kl} \equiv 0$ in $\Omega$, for all $k,l \in \{ 1, \dots, {n^\prime} \}$ with $k <l$. This is easy to achieve by renumbering lines and columns of $\calc$, that is, by changing the order of the equations in \eqref{Plambda} and renumbering the components of $u$. Indeed, if $\calc$ is irreducible, we can take $n^\prime=1$, $\calc_{11}=\calc$; if not, there are two subsets $I,J$ as in the previous paragraph, and we renumber so that $I=\{1,\ldots k\}$ with $k=|I|$, then repeat the same until reaching \eqref{blocktriangularform}. See Section \ref{EA} below, and Section 8 in \cite{BuscaSirakov}.

From now on we assume that $\calc$ in \eqref{Plambda} is in the form \eqref{blocktriangularform}. We will say that $u\ll v$ in some block if there exists some $k\in\{1,\ldots, n^\prime\}$ such that $\widetilde{u}\ll \widetilde{v}$ in $\Omega$, where for any $w\in\mathbb{R}^n$ we denote with $\widetilde{w}$ the vector $(w_{s_{k-1}+1},\cdots , w_{s_k})$, and $s_0=0$, $s_k=\sum_{i=1}^k t_i$.

 The additional assumption that we need to impose, which extends and plays the role of hypothesis $c\gneqq 0$ from the scalar case, is the following.
\begin{align}\label{1x1 blocks nonzero}\tag{$H_3$}
\textrm{In \eqref{blocktriangularform}, there is no $1 \times 1$  block with a zero coefficient, i.e. if $t_k=1$ then $\calc_{kk}\not\equiv0$.}
\end{align}
This hypothesis seems to be optimal for our kind of systems, see Remark \ref{optimal}. To our knowledge, this is the first time such a hypothesis appears in the study of elliptic systems.

We now state our  results.
The first theorem is a uniform estimate for solutions of \eqref{Plambda}, which is both important in itself and instrumental for the existence statements below.

\begin{teo} \label{apriori}
Suppose \eqref{M}, \eqref{SC}, \eqref{1x1 blocks nonzero} hold. Let $\Lambda_1, \Lambda_2$ with $0<\Lambda_1 < \Lambda_2$.
Then every $L^p$-viscosity solution $(u_1, \dots, u_n)$ of \eqref{Plambda} satisfies
\[ \norm{u_i}_{\infty} \le C, \text{ for all } \lambda \in [\Lambda_1, \Lambda_2], \, i=1, \dots, n, \]
where $C$ depends on $n, N, p, \mu_1, \mu_2,  \mathrm{diam}(\Omega), \Lambda_1, \Lambda_2, \norm{b}_{\infty}, \norm{c_{ij}}_{\infty}, \norm{h_i}_{\infty}$, $\lambda_p, \Lambda_p$, and on a lower bound on the measure of the sets where the $c_{ij}$  are positive, for those $i,j$ which determine the irreducibility of the blocks in the form \eqref{blocktriangularform}.
\end{teo}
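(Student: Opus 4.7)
My approach combines a block-triangular reduction with a Hopf--Cole-type exponential change of variables that tames the quadratic gradient term, followed by a contradiction argument based on the half-Harnack and boundary regularity toolkit of \cite{jfa19,B2016}. The proof has three main moments: reduction to an irreducible coupling matrix, a uniform lower bound on the negative parts $(u_i)^-$, and a blow-up/contradiction argument for the positive parts.

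First I would reduce to the case of an irreducible $\calc$. By \eqref{blocktriangularform} the initial $t_1$ equations of \eqref{Plambda} form a closed subsystem for $(u_1,\ldots,u_{s_1})$ whose matrix $\calc_{11}$ is irreducible and still satisfies \eqref{1x1 blocks nonzero}. Once this subsystem has been handled, its bounded solution is absorbed into the $L^\infty$-source terms of the next group of equations, which has the same structure; induction on $n'$ leaves the irreducible case. Still in this setting, the next task is a uniform lower bound $(u_i)^-\le C_0$. Using $\mu_1 I\le M_i$, the Hopf--Cole change of variable $w_i=\mu_1^{-1}(1-e^{\mu_1\min(u_i,0)})$ converts the equation on $\{u_i<0\}$ into a sub-solution inequality in which the quadratic gradient term is absorbed exactly by the exponential factor, with source controlled by $\sum_j c_{ij}(u_j)^-$ and $\|h_i\|_\infty$; an ABP estimate together with an irreducibility chain, using \eqref{1x1 blocks nonzero} to cover scalar blocks, yields the claimed bound, uniformly in $\lambda\in[\Lambda_1,\Lambda_2]$.

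For the upper bound I would argue by contradiction: assume there exist $\lambda_k\in[\Lambda_1,\Lambda_2]$ and solutions $u^k$ of \eqref{Plambda} with $M_k:=\max_i\|u^k_i\|_\infty\to\infty$; by the previous step the blow-up occurs on the positive side. Apply the transform $v^k_i=\mu_1^{-1}(e^{\mu_1 u^k_i}-1)\ge-\mu_1^{-1}$, which turns \eqref{Plambda} (in the sub- and super-solution sense consistent with \eqref{SC}) into
\begin{equation*}
-\widetilde F_i[v^k_i]=(1+\mu_1 v^k_i)\Big(\lambda_k\textstyle\sum_{j=1}^n c_{ij}(x)u^k_j+h_i(x)\Big)+\mathcal{E}_i(x,Dv^k_i),
\end{equation*}
for a new uniformly elliptic operator $\widetilde F_i$ with the same $\lambda_P,\Lambda_P$ and a linear gradient correction $\mathcal{E}_i$; the quadratic term has been replaced by an exponential-in-$u$ factor. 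Normalize by $\widetilde M_k:=\max_i\|v^k_i\|_\infty\to\infty$ and consider $\bar v^k_i=v^k_i/\widetilde M_k$. Using the interior and boundary half-Harnack inequalities from \cite{B2016} together with the global $C^{1,\alpha}$ regularity of \cite{regularidade}, one extracts a non-trivial limit $\bar v\ge 0$ with $\max_i\|\bar v_i\|_\infty=1$ and $\bar v\equiv 0$ on $\partial\Omega$. Irreducibility of $\calc$ and the Vázquez strong maximum principle then propagate strict positivity from one component of $\bar v$ to all the others, while \eqref{1x1 blocks nonzero} covers the $1\times 1$ block case where the irreducibility chain is empty; a Hopf-type boundary analysis produces the contradiction.

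The main obstacle lies in the passage to the limit in the last step. The right-hand side $(1+\mu_1 v^k_i)\lambda_k\sum_j c_{ij}u^k_j$ is formally of order $\widetilde M_k\log\widetilde M_k$ at the peak, which blows up faster than the normalization $\widetilde M_k$. To obtain a meaningful limit equation one must exploit the identity $u^k_j=\mu_1^{-1}\log(1+\mu_1 v^k_j)$, combined with the Step~2 bound on negative parts, to show that the source divided by $\widetilde M_k$ is bounded in an appropriate topology and converges to a coefficient that still carries the irreducibility of $\calc$. Tracking this coupling through the blow-up, so that the resulting linear limit problem is incompatible with the Vázquez principle and the Hopf lemma at the boundary, is the new ingredient beyond the scalar case of \cite{jfa19} and is where most of the technical novelty of the theorem resides.
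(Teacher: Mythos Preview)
Your reduction to irreducible blocks matches the paper, but both of your analytic steps diverge from the paper's argument, and the upper-bound step contains a gap that you yourself flag but do not close.

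For the lower bound, the paper uses neither ABP nor any irreducibility chain. After the exponential change on $U_i=(u_i)^-$ it takes $w=\max_i w_i$, which satisfies a single scalar subsolution inequality with source $2\lambda m^{-1}c(x)|\ln(1-mw)|(1-mw)$, $c=\max_{i,j}c_{ij}$; the coupling disappears. The bound $w<1/m$ then comes from a Perron-type $\bar w=\sup\mathcal A$ construction and the V\'azquez strong maximum principle of \cite[Lemma~5.3]{jfa19}, exactly as in the scalar case. Your proposed ABP step would need control of $\sum_j c_{ij}(u_j)^-$ in the source, which is the very quantity you are bounding, so as written it is circular.

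The more serious issue is the upper bound. The paper does \emph{not} argue by blow-up and compactness; it uses a direct quantitative chain. After shifting $v_i=u_i+C_1+\delta\ge0$ and setting $w_i=m_1^{-1}(e^{m_1v_i}-1)$, it applies the global quantitative strong maximum principle (Theorem~\ref{QSMP}) to each equation. Writing $I_i=\inf_\Omega w_i/d$, the irreducibility of $\calc$ enters here and only here: for each $k$ there is some $j\neq k$ with $c_{jk}\gneqq0$, and the $j$-th inequality of the type \eqref{estimatesI1}--\eqref{estimatesI2} forces $I_k\le C$. The global weak Harnack inequality (Theorem~\ref{WHI}) converts $I_i\le C$ into $\|w_i\|_{L^{\varepsilon_i}}\le C$, and finally the global local maximum principle (Theorem~\ref{LMP}) applied to $z=\max_i m_2^{-1}(e^{m_2u_i}-1)$ closes the estimate. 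No limits, no normalization, no compactness.

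Your blow-up route runs into exactly the obstacle you describe: after the exponential change the source is of order $\widetilde M_k\log\widetilde M_k$, strictly faster than the normalization $\widetilde M_k$, so the rescaled equations do not converge to a nontrivial linear limit problem in any evident topology. This is precisely the critical-growth phenomenon that the method of \cite{jfa19,B2016,Sirakov19} was designed to bypass; the logarithmic loss is absorbed not by a limit passage but by the $L^\varepsilon$-type structure of Theorems~\ref{QSMP}--\ref{LMP}. Unless you supply a genuinely new compactness mechanism that handles the $\log$ divergence, the argument as sketched does not close.
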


The next theorems describe the solution set of \eqref{Plambda}.
\begin{teo} \label{th1.1,1.2,1.3}
Assume \eqref{M}, \eqref{SC},  \eqref{H0}, \eqref{ExistUnic M bem definido}, and \eqref{1x1 blocks nonzero}.

1. Then, for $\lambda\leq 0$, the problem \eqref{Plambda} has an $L^p$-viscosity solution $u_\lambda$ that converges to $u_0$ in $E$ as $\lambda\rightarrow 0^-$. Moreover, the set
$
\Sigma = \{ \,(\lambda , u) \in \real \times E\, ; \, u \;\, \textrm{solves \eqref{Plambda}} \,\}
$
possesses an unbounded component $\mathcal{C}^+\subset [0,+\infty]\times E$ such that $\mathcal{C}^+\cap ( \{0\}\times E )=\{u_0\}$.

\smallskip

2. This component is such that:
either it bifurcates from infinity to the right of the axis $\lambda =0$ with the corresponding solutions having a positive part blowing up to infinity in $C (\overline{\Omega})$ as $\lambda\rightarrow 0^+$;
or its projection on the $\lambda$ axis is $[0,+\infty)$.

\smallskip

3. There exists $\bar{\lambda} \in (0,+\infty]$ such that, for every $\lambda\in (0,\bar{\lambda})$, the problem \eqref{Plambda} has at least two $L^p$-viscosity solutions, $u_{\lambda, 1}$ and $u_{\lambda , 2}\,$, satisfying
$u_{\lambda , 1}\rightarrow u_0$ in $E$;
$\max_{\overline{\Omega}} u_{\lambda , 2} \rightarrow +\infty$ as $\lambda\rightarrow 0^+$;
and if $\,\bar{\lambda}<+\infty$, the problem $(P_{\bar{\lambda}})$ has at least one  $L^p$-viscosity solution. The latter is unique if $F(x,\vec{p},X)$ is convex in $(\vec{p},X)$.

\smallskip

4. If \eqref{Hstrong} holds, the solutions $u_\lambda\,$ for $\lambda\leq 0$ are unique among $L^p$-viscosity solutions; whereas the solutions from 3. for $\lambda>0$ are ordered in some block.
If in addition the system is fully coupled, $u_{\lambda, 1} \ll u_{\lambda , 2}$ in the sense of definition \ref{def1.2}, for all $\lambda>0$.
\end{teo}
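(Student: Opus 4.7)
The plan is to transplant the scalar Leray--Schauder continuation framework of \cite{jfa19} to the system setting, using Theorem \ref{apriori} as the replacement for the scalar a priori bound. First I would recast \eqref{Plambda} as a fixed point equation $u = T(\lambda, u)$ in $E = C^1(\overline{\Omega})^n$, where $T(\lambda, \cdot)$ is defined componentwise by solving, for each $i$, the Dirichlet problem
\[
-F_i(x, Dw_i, D^2 w_i) \;=\; \braket{M_i(x) Du_i, Du_i} + \lambda \sum_{j=1}^n c_{ij}(x) u_j + h_i(x),
\]
with $w_i = 0$ on $\partial\Omega$. Hypothesis \eqref{ExistUnic M bem definido} makes $T$ well-defined, \eqref{Hstrong} together with the global $C^{1,\alpha}$ estimates of \cite{regularidade} gives compactness of $T$ on $E$, and \eqref{H0} provides the starting pair $(0, u_0)$.

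For part 1, I would treat $\lambda\le 0$ first: the problem is cooperative and the zero-order term has the right sign, so a maximum-principle-based comparison (as in \cite{arma2010}, solving \eqref{blocktriangularform} block by block from bottom to top) produces an $L^p$-viscosity solution $u_\lambda$ close to $u_0$ for $|\lambda|$ small, and a standard continuation then extends this down to all $\lambda \le 0$, with $u_\lambda\to u_0$ in $E$ as $\lambda\to 0^-$ by uniqueness and the $C^{1,\alpha}$ compactness. For the continuum $\mathcal{C}^+$ to the right of $\lambda=0$, I would compute the Leray--Schauder index of $u_0$ at $\lambda=0$ via homotopy of $T(0,\cdot)$ to its linearization, and then invoke the Rabinowitz-style global alternative to produce an unbounded connected component emanating from $(0,u_0)$.

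For part 2, Theorem \ref{apriori} gives uniform $L^\infty$ bounds on every compact sub-interval $[\Lambda_1,\Lambda_2]\subset(0,\infty)$, which via \eqref{Hstrong} and \cite{regularidade} upgrade to uniform $C^{1,\alpha}$ bounds; thus the only way $\mathcal{C}^+$ can fail to project onto $[0,\infty)$ is to blow up as $\lambda\to 0^+$, and in that case at least one positive part must blow up in $C(\overline{\Omega})$, giving the bifurcation from infinity. For part 3, set $\bar\lambda := \sup\{\lambda>0 : \eqref{Plambda} \text{ is solvable}\}$. For $\lambda>0$ small, the implicit function theorem (applied to the linearization at $u_0$, which is invertible because $\lambda=0$ is not an eigenvalue) yields the small solution $u_{\lambda,1}\to u_0$, while $\mathcal{C}^+$ produces a second solution $u_{\lambda,2}$ whose positive part blows up as $\lambda\to 0^+$; if $\bar\lambda<\infty$, compactness along a sequence $\lambda_k\nearrow\bar\lambda$ yields a solution of $(P_{\bar\lambda})$, with uniqueness under the convexity hypothesis by the standard sub/supersolution argument for convex operators.

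Part 4 is where the system structure is used most seriously. Uniqueness for $\lambda\le 0$ among $L^p$-viscosity solutions, under \eqref{Hstrong}, would follow by comparing two solutions block by block along the triangular form \eqref{blocktriangularform}, invoking the cooperative maximum principle for systems of the style used in \cite{BuscaSirakov}. The ordering $u_{\lambda,1}\le u_{\lambda,2}$ in some block is inherited from the monotone branch construction of $u_{\lambda,2}$ along $\mathcal{C}^+$. Finally, the strict inequality $u_{\lambda,1}\ll u_{\lambda,2}$ under full coupling is obtained by writing the system satisfied by the difference $v=u_{\lambda,2}-u_{\lambda,1}$ as a cooperative linear system with bounded coefficients (using the $C^{1,\alpha}$ regularity to linearize the quadratic gradient term), and then applying the V\'azquez strong maximum principle and the Hopf boundary lemma to the irreducible coupling matrix. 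The main obstacle is precisely the interplay between compactness of $T$ and the quadratic gradient nonlinearity $\braket{M_i Du_i, Du_i}$: the continuation and degree arguments only work once one has uniform $C^1$ control along the branch, which is exactly what Theorem \ref{apriori} combined with \cite{regularidade} delivers, so that obstacle is already dissolved by the a priori estimate proved earlier in the paper.
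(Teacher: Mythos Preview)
Your overall framework is reasonable, but there are two genuine gaps in parts 1 and 3 that would not survive as written.

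\textbf{Linearization and the implicit function theorem are not available.} The operators $F_i$ in \eqref{Plambda} satisfy only \eqref{SC}; they are Isaacs-type and need not be Fr\'echet differentiable in $(\vec p, X)$. So neither the index computation ``via homotopy of $T(0,\cdot)$ to its linearization'' nor the production of $u_{\lambda,1}$ ``by the implicit function theorem applied to the linearization at $u_0$'' is justified. The paper avoids any linearization: the index $\mathrm{ind}(I-\mathcal T_0,u_0)=1$ is obtained from the strict sub/super\-solution pair $u_0\pm\varepsilon$ via Theorem~\ref{th2.1}(i), using only that $u_0$ is the unique solution of $(P_0)$.

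\textbf{The multiplicity argument is incomplete.} Even granting a small branch $u_{\lambda,1}\to u_0$, your claim that $\mathcal C^+$ supplies a \emph{distinct} second solution $u_{\lambda,2}$ does not follow: $\mathcal C^+$ emanates from $(0,u_0)$, so near $\lambda=0$ it may coincide with the small branch, and the dichotomy in part~2 does not force bifurcation from infinity. Your definition $\bar\lambda=\sup\{\lambda:\eqref{Plambda}\text{ solvable}\}$ is also not the one in the statement, which concerns \emph{two} solutions. The paper obtains multiplicity by a degree excision: $\deg(I-\mathcal T_\lambda,\mathcal S,0)=1$ in a small set $\mathcal S$, while $\deg(I-\mathcal T_\lambda,\mathcal B_R,0)=0$ in a large ball, the latter coming from Lemma~\ref{lema Plambda,k has no solutions} (the auxiliary homotopy $(P_{\lambda,k})$ has no solutions for $k\ge1$, an eigenvalue obstruction via Proposition~\ref{th4.1 QB}). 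This is the missing ingredient in your sketch. Similarly, in part~4 the ordering is not ``inherited from a monotone branch construction''; it comes from choosing $u_{\lambda,1}$ as the \emph{minimal} solution above the strict subsolution $\xi_\lambda$ of Lemma~\ref{lemma 4.2}, and then a contradiction via Theorem~\ref{th2.1}(ii) forces $u_{\lambda,1}\le u_{\lambda,2}$ componentwise before the SMP/Hopf argument you describe.
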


In the next two theorems, we show that it is possible to obtain a more precise description of the set $\Sigma$, provided we know the sign of $u_0$. For this, we need to extend the hypothesis $c(x)u_0\not\equiv 0$ from the scalar case to the context of the system. The following assumption is  a natural requirement in view of our weak coupling hypothesis \eqref{1x1 blocks nonzero}.
\begin{align}\label{cu0 nonzero}\tag{$H_4$}
\textrm{ $(\calc u_0)_i \not\equiv 0$ for at least one $i \in S_k=\{ s_{k-1} + 1, \dots, s_k \}$, for all $k \in \{1, \dots, n' \}$,}
\end{align}
where $s_0=0$, $s_k=\sum_{i=1}^k t_i$, with $t_i$ and $n'$ coming from \eqref{blocktriangularform}.

Notice that hypothesis \eqref{cu0 nonzero} is consistent with the results obtained for single equations in nondivergence form in \cite{jfa19}.
In the particular case $n'=1$, namely if the system is fully coupled, we recover the assumption $\calc(x)u_0 \not\equiv 0$, as a vector.

\begin{teo} \label{th1.5}
Suppose \eqref{M}, \eqref{SC}, \eqref{H0},  \eqref{ExistUnic M bem definido}, \eqref{Hstrong}, \eqref{1x1 blocks nonzero}, \eqref{cu0 nonzero}, and $u_0 \leq 0$.

Then every nonpositive solution of \eqref{Plambda} with $\lambda>0$ satisfies $u\ll u_0$. Furthermore, for every $\lambda >0$, the problem \eqref{Plambda} has at least two nontrivial strong solutions $ u_{\lambda, 1} \le u_{\lambda , 2}\,$, such that  $u_{\lambda_2 ,1}\ll u_{\lambda_1 ,1} \ll u_0$ if $\,0<\lambda_1<\lambda_2\,$, and
$u_{\lambda , 1} \rightarrow u_0$ in $E$; $\max_{\overline{\Omega}} u_{\lambda , 2} \rightarrow +\infty$ as ${\lambda\rightarrow 0^+}$.
If $F(x,\vec{p},X)$ is convex in $(\vec{p},X)$ then $\max_{\overline{\Omega}}\, u_{\lambda,2}>0$ for all $\lambda>0$.
\end{teo}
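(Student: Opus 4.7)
The strategy combines Theorem \ref{th1.1,1.2,1.3} and the uniform a priori bound of Theorem \ref{apriori} with the sign information $u_0\leq 0$, which turns $u_0$ into a \emph{strict} supersolution of \eqref{Plambda} for every $\lambda>0$ and opens the door to componentwise strong maximum principle arguments propagating through the block decomposition \eqref{blocktriangularform}.

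\textbf{Step 1 (strict ordering $u\ll u_0$).} Let $u\leq 0$ be an $L^p$-viscosity solution of \eqref{Plambda}, $\lambda>0$, and set $v_i=u_i-u_i^0$. Subtracting the equations satisfied by $u$ and $u_0$, using the identity
\[
\langle M_i Du_i, Du_i\rangle-\langle M_i Du_i^0, Du_i^0\rangle = \langle M_i(Du_i+Du_i^0), Dv_i\rangle
\]
together with \eqref{SC}, one obtains componentwise
\[
-\mathcal{M}^+(D^2 v_i)-\tilde B_i(x)\cdot Dv_i \;\leq\; \lambda(\mathcal{C}u)_i \;\leq\; 0 \quad\text{in }\Omega, \qquad v_i=0 \text{ on }\partial\Omega,
\]
with $\tilde B_i\in L^\infty(\Omega)$ (allowable since $u,u_0\in C^{1,\alpha}(\overline{\Omega})$ by \eqref{Hstrong} and the regularity of \cite{regularidade}). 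The usual ABP/maximum principle for this Pucci-type operator gives $v_i\leq 0$, and the V\'azquez strong maximum principle together with Hopf's lemma yield the dichotomy: either $v_i<0$ in $\Omega$ with $\partial_\nu v_i<0$ on $\partial\Omega$, or $v_i\equiv 0$. If the latter holds at some index $i$, then \eqref{Plambda} forces $(\mathcal{C}u)_i\equiv 0$, so $c_{ij}u_j\equiv 0$ for every $j$. Iterating from the top block $S_1$ downwards along \eqref{blocktriangularform}, the irreducibility of each $\mathcal{C}_{kk}$ prevents the ``$v_i\equiv 0$'' and ``$v_i<0$'' indices from coexisting within the same block $S_k$; thus either the whole block vanishes identically, which contradicts \eqref{cu0 nonzero} for that $k$, or none of it does. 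This delicate block-by-block bookkeeping is the main technical obstacle.

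\textbf{Step 2 (first branch for every $\lambda>0$).} Since $u_0\leq 0$ and \eqref{cu0 nonzero} holds, $u_0$ is a strict supersolution of \eqref{Plambda} for every $\lambda>0$. A strict subsolution $\underline u_\lambda\leq u_0$ is built, e.g., as a large negative multiple of the first eigenfunction of an auxiliary linearized problem, as in the scalar analogue in \cite{jfa19}. The sub/supersolution method adapted to $L^p$-viscosity solutions of cooperative systems produces a solution $u_{\lambda,1}$ with $\underline u_\lambda\leq u_{\lambda,1}\leq u_0$, which Step 1 upgrades to $u_{\lambda,1}\ll u_0$. For $0<\lambda_1<\lambda_2$, the nonpositive $u_{\lambda_2,1}$ is a subsolution of $(P_{\lambda_1})$ because $(\lambda_1-\lambda_2)\mathcal{C}u_{\lambda_2,1}\geq 0$; reapplying Step 1 with $u_{\lambda_2,1}$ in place of $u$ and $u_{\lambda_1,1}$ in place of $u_0$ yields the strict monotonicity $u_{\lambda_2,1}\ll u_{\lambda_1,1}$. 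Convergence $u_{\lambda,1}\to u_0$ in $E$ as $\lambda\to 0^+$ then follows by compactness and uniqueness of $u_0$.

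\textbf{Step 3 (second solution and convex case).} Step 2 ensures existence of $u_{\lambda,1}$ for every $\lambda>0$, which rules out $\bar\lambda<+\infty$ in Theorem \ref{th1.1,1.2,1.3} and forces the projection of $\mathcal{C}^+$ onto the $\lambda$-axis to be $[0,+\infty)$. Combining the uniform bound of Theorem \ref{apriori} with the Leray--Schauder degree argument used in the scalar case \cite{jfa19}, localized near $u_{\lambda,1}$ (whose local degree equals $+1$ because it sits between strict sub- and supersolutions), delivers a second solution $u_{\lambda,2}\geq u_{\lambda,1}$ for every $\lambda>0$, with $\max_{\overline{\Omega}}u_{\lambda,2}\to+\infty$ as $\lambda\to 0^+$ inherited from Theorem \ref{th1.1,1.2,1.3}. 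Finally, assume $F$ is convex in $(\vec p,X)$ and $u_{\lambda,2}\leq 0$: by Step 1, $u_{\lambda,2}\ll u_0$, and a standard convexity-based uniqueness argument in the nonpositive cone (convex combinations of two ordered nonpositive solutions produce a strict subsolution strictly between them, contradicting the strict ordering of Step 1) forces $u_{\lambda,2}=u_{\lambda,1}$, which is absurd. Hence $\max_{\overline{\Omega}}u_{\lambda,2}>0$.
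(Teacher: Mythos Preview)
Your outline is essentially the paper's strategy, and Steps 1 and 3 are correct. Two comments.

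In Step 1 you handle the quadratic gradient term by absorbing it into a bounded first-order coefficient $\tilde B_i$, using that both $u$ and $u_0$ lie in $C^{1,\alpha}(\overline\Omega)$ under \eqref{Hstrong}. The paper instead performs the exponential change of Lemma~\ref{lemma2.3arma} (see Claim~\ref{step 1 th1.5}); your route is legitimate here and somewhat shorter. The block-by-block propagation is right, though for a block $S_k$ with $k\ge2$ you should record explicitly that if $v_i\equiv0$ throughout $S_k$ then $c_{il}u_l\equiv0$ forces $c_{il}\equiv0$ for every $l$ in an earlier block (because $u_l<0$ there), so that $(\mathcal C u_0)_i=0$ genuinely follows and contradicts \eqref{cu0 nonzero}.

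The description of the strict subsolution in Step 2 is wrong: a large negative multiple $-t\varphi_1$ of an eigenfunction is in general \emph{not} a subsolution of \eqref{Plambda}. At an interior maximum of $\varphi_1$ one has $D\varphi_1=0$, so the quadratic term $\langle M D(-t\varphi_1),D(-t\varphi_1)\rangle$ vanishes there, while the zero-order contribution $\lambda(\mathcal C(-t\varphi_1))_i\to-\infty$ as $t\to\infty$; the subsolution inequality then fails at that point for large $t$. Neither the paper nor \cite{jfa19} uses this construction. The correct strict minimal subsolution $\xi_\lambda$ is provided by Lemma~\ref{lemma 4.2}: one first solves the \emph{linear} Dirichlet problem $\mathcal L^-[\xi_0]=\lambda K\mathcal C(x)+h^-(x)+1$ (obtaining $\xi_0\ll0$ by ABP, SMP and Hopf), and then upgrades to a strict strong subsolution via a truncated auxiliary problem. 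You should replace the eigenfunction heuristic by a direct appeal to Lemma~\ref{lemma 4.2}.
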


\begin{figure}
\centering
\includegraphics[scale=0.31]{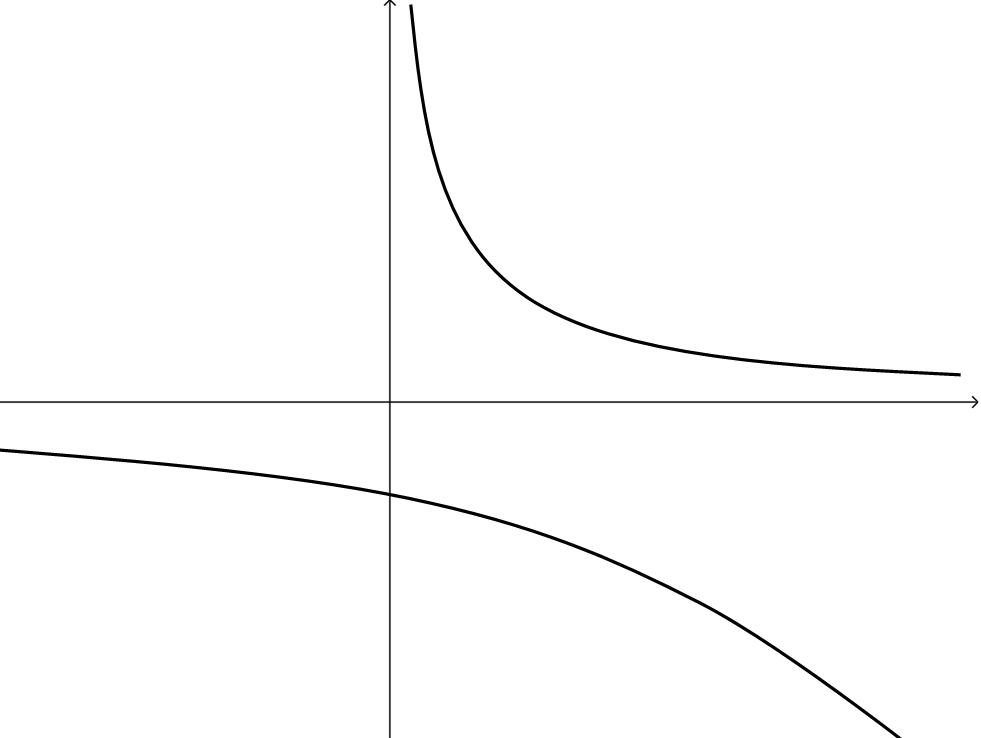}
\caption{Illustration of Theorem \ref{th1.5}.}
\label{figurath15}
\end{figure}

\begin{teo} \label{th1.4}
Suppose \eqref{M}, \eqref{SC}, \eqref{H0}, \eqref{ExistUnic M bem definido}, \eqref{Hstrong}, \eqref{1x1 blocks nonzero}, \eqref{cu0 nonzero}, and $u_0 \geq 0$.
\smallskip

Then every nonnegative solution of \eqref{Plambda} with $\lambda>0$ satisfies $u\gg u_0$. Moreover, there exists  $\bar{\lambda}_1 \in (0,+\infty)$ such that
for every $\lambda\in (0,\bar{\lambda}_1)$, the problem \eqref{Plambda} has at least two nontrivial strong solutions with $ u_{\lambda, 1} \le u_{\lambda , 2}\,$, where $u_0 \ll u_{\lambda_1 ,1}\ll u_{\lambda_2 ,1} $ if $\,0<\lambda_1<\lambda_2\,$,
$u_{\lambda , 1} \rightarrow u_0$ in $E$, and $\max_{\overline{\Omega}} u_{\lambda , 2} \rightarrow +\infty$ as ${\lambda\rightarrow 0^+}$.
The problem $(P_{\bar{\lambda}_1})$ has at least one nonnegative strong solution, which is unique if $F$ is convex in $(\vec{p},X)$; and for $\lambda > \bar{\lambda}_1$, the problem \eqref{Plambda} has no nonnegative solution.

\smallskip

Furthermore, there exists some $\delta >0$ such that, if
$\sup_i \mu_2\|h_i\|_{L^p(\Omega)}\leq \delta$,
with $h\gneqq 0$,
then we have the existence of $\bar{\lambda}_2 > \bar{\lambda}_1$ such that  \eqref{Plambda} has at least two strong solutions for $\lambda> \bar{\lambda}_2$, with $u_{\lambda,1}\ll 0$ in $\Omega$ and $\min_{\overline{\Omega}} u_{\lambda,2}<0$.
The problem  $(P_{\bar{\lambda}_2})$ has at least one nonpositive strong solution, which is unique if $F$ is convex in $(\vec{p},X)$; and
for $\lambda < \bar{\lambda}_2$, the problem \eqref{Plambda} has no nonpositive solution.
\end{teo}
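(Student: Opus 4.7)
The plan is to adapt the scalar strategy of \cite{jfa19} to the system by exploiting the block triangular form \eqref{blocktriangularform} and the new a priori bound Theorem \ref{apriori}. First, for any nonnegative strong solution $u$ of \eqref{Plambda} with $\lambda>0$, subtract the equations for $u$ and $u_0$: writing $w=u-u_0$, applying \eqref{SC} to the difference $F_i[u_i]-F_i[u_i^0]$, and linearising the quadratic term via the algebraic identity $\langle M_i Du_i, Du_i\rangle-\langle M_i Du_i^0, Du_i^0\rangle=\langle M_i Dw_i, D(u_i+u_i^0)\rangle$, one obtains a componentwise linear inequality
\[
\mm(D^2 w_i)-\tilde b_i(x)|Dw_i|\le \lambda(\calc u)_i,
\]
with bounded $\tilde b_i$. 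Since $u\ge 0$ and $\calc\ge 0$, the right-hand sides are nonnegative, and running \eqref{blocktriangularform} from the top block downwards, the irreducibility of each diagonal block $\calc_{kk}$ together with \eqref{cu0 nonzero} produces a nonzero source in each block. The Hopf--Vazquez strong maximum principle then yields $u\gg u_0$ in the sense of Definition \ref{def1.2}.

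For the first branch, set $\bar\lambda_1:=\sup\{\lambda>0:\eqref{Plambda}\text{ admits a nonnegative strong solution}\}$. Locally near $\lambda=0$, $u_{\lambda,1}$ is constructed by the implicit function theorem around $u_0$, using \eqref{SC} and \eqref{Hstrong} to set up a Fredholm framework on $E$. The a priori bound of Theorem \ref{apriori} and the $C^{1,\alpha}$ regularity from \cite{regularidade} then extend this local branch by Leray--Schauder continuation up to $\bar\lambda_1$; monotonicity $u_{\lambda_1,1}\ll u_{\lambda_2,1}$ for $\lambda_1<\lambda_2$ is the comparison above applied to pairs of solutions. To see $\bar\lambda_1<+\infty$, I test the equation for any nonnegative solution against the positive principal eigenfunction of the linearisation of $-F-\langle M\,\cdot,\cdot\rangle$ at $u_0$ with weight $\calc$: the resulting integral identity forces $\lambda$ to stay below the corresponding generalised principal eigenvalue. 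Solvability at $\bar\lambda_1$ follows by compactness along $\lambda_n\uparrow\bar\lambda_1$; uniqueness under convex $F$ is the classical strict convex combination argument. Nonexistence beyond $\bar\lambda_1$ is built into the definition.

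The second nonnegative solution $u_{\lambda,2}$ for $\lambda\in(0,\bar\lambda_1)$ is supplied by the unbounded continuum $\mathcal{C}^+$ of Theorem \ref{th1.1,1.2,1.3}: on each compact subinterval of $(0,\bar\lambda_1]$, the continuum is bounded by Theorem \ref{apriori}, so it must escape to infinity as $\lambda\to 0^+$, yielding $\max u_{\lambda,2}\to+\infty$. For the nonpositive regime, define $\bar\lambda_2:=\inf\{\lambda>0:\eqref{Plambda}\text{ admits a nonpositive strong solution}\}$. The smallness condition $\mu_2\|h_i\|_{L^p}\le\delta$ combined with $h\gneqq 0$ lets me construct, for $\lambda$ sufficiently large, a strict negative supersolution proportional to $-\varphi_1$, where $\varphi_1$ is an appropriate positive eigenfunction; Perron's method together with Theorem \ref{apriori} then produces a nonpositive solution, so $\bar\lambda_2<+\infty$. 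The same continuation, comparison and compactness arguments, now centred at this negative supersolution, deliver monotonicity, solvability at $\bar\lambda_2$, nonexistence below, and $\bar\lambda_2>\bar\lambda_1$ (the latter by sign incompatibility at the same $\lambda$). The second solution $u_{\lambda,2}$ with $\min u_{\lambda,2}<0$ for $\lambda>\bar\lambda_2$ is realised as the extension of $\mathcal{C}^+$ past $\bar\lambda_1$, which necessarily becomes sign-changing there; alternatively, it is produced via a degree jump on the open set $\{u\in E:\min_i u_i<0\}$, with Theorem \ref{apriori} preventing escape of the branch.

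The hard part, I expect, is the simultaneous control of the two branches in the presence of the block structure: one must rule out that $\mathcal{C}^+$ or its nonpositive analogue develops pathological sign patterns that would break monotonicity or degree calculations, which is exactly where the block-by-block strong maximum principle is essential and where both \eqref{1x1 blocks nonzero} and \eqref{cu0 nonzero} enter decisively. A second technical hurdle is the Fredholm theory for the linearisation of $-F-\langle M\,\cdot,\cdot\rangle$ at a nonzero strong solution, whose weight $\langle M\,\cdot, D(u+u_0)\rangle$ is only bounded, and whose positive principal eigenfunction must be produced and exploited for the testing argument; this is the genuinely new ingredient compared to the scalar case \cite{jfa19}.
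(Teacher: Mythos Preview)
Your proposal contains a genuine gap in the argument that $\bar\lambda_1<\infty$. You write that you will ``test the equation for any nonnegative solution against the positive principal eigenfunction'' and that ``the resulting integral identity forces $\lambda$ to stay below the corresponding generalised principal eigenvalue.'' But the operators $F_i$ are fully nonlinear and in nondivergence form (Pucci or Isaacs type), so there is no integration by parts, no weak formulation, and no integral identity to exploit; the paper stresses in the introduction that variational methods do not apply even when each $F_i$ is the Laplacian, because the system itself has no variational structure. The paper's actual mechanism (Claim~\ref{no nonnegative sol of Plambda for lambda large}) is purely pointwise: for $v=u-u_0$ one takes $\underline v=\min_{1\le i\le t_1} v_i$ over the first block, derives the viscosity inequality $(\widehat{\mathcal L}^-+\widehat\lambda_1\,\widehat c)[\underline v]\lneqq 0$ with the weight $\widehat c=\min_{1\le i\le t_1}\sum_j c_{ij}$, and then invokes the simplicity/comparison result Proposition~\ref{th4.1 QB} to force $\underline v=t\widehat\varphi_1$, a contradiction. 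The passage to $\min$ (not a linearisation) and the weighted-eigenvalue machinery of Section~\ref{section eigenvalue} are precisely what replaces your testing step.

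A second gap concerns the nonpositive branch. Taking the negative supersolution ``proportional to $-\varphi_1$'' does not work: $-\varphi_1$ solves a homogeneous eigenvalue equation and will not absorb the forcing $h\gneqq 0$ together with the quadratic gradient term. The paper instead solves, via Theorem~\ref{th Exist>lambda1}, an auxiliary linear Dirichlet problem $-\mathcal L^+[w]=\lambda_0 c w+1+h$ for some $\lambda_0$ just above $\lambda_1^-(\mathcal L^+(c))$, and then uses an anti-maximum principle (Claim~\ref{AMP}) to show $w\ll 0$; the supersolution is $\eta_\gamma=\gamma w$, and the smallness of $\gamma$ (equivalently of $\mu_2\|h\|_{L^p}$) is what makes the quadratic term $\langle M D\eta,D\eta\rangle$ dominated by the extra ``$+1$''. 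Your sketch misses both the anti-maximum principle and the role of the auxiliary existence result beyond $\lambda_1^-$. Finally, the construction of $u_{\lambda,1}$ near $\lambda=0$ in the paper is by sub/supersolution and degree (Theorem~\ref{th2.1}) with $u_0$ as strict subsolution, not by the implicit function theorem; the latter would require differentiability of $F$ in $(p,X)$, which is not assumed.
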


\begin{figure}[!htb]
\centering
\includegraphics[scale=0.43]{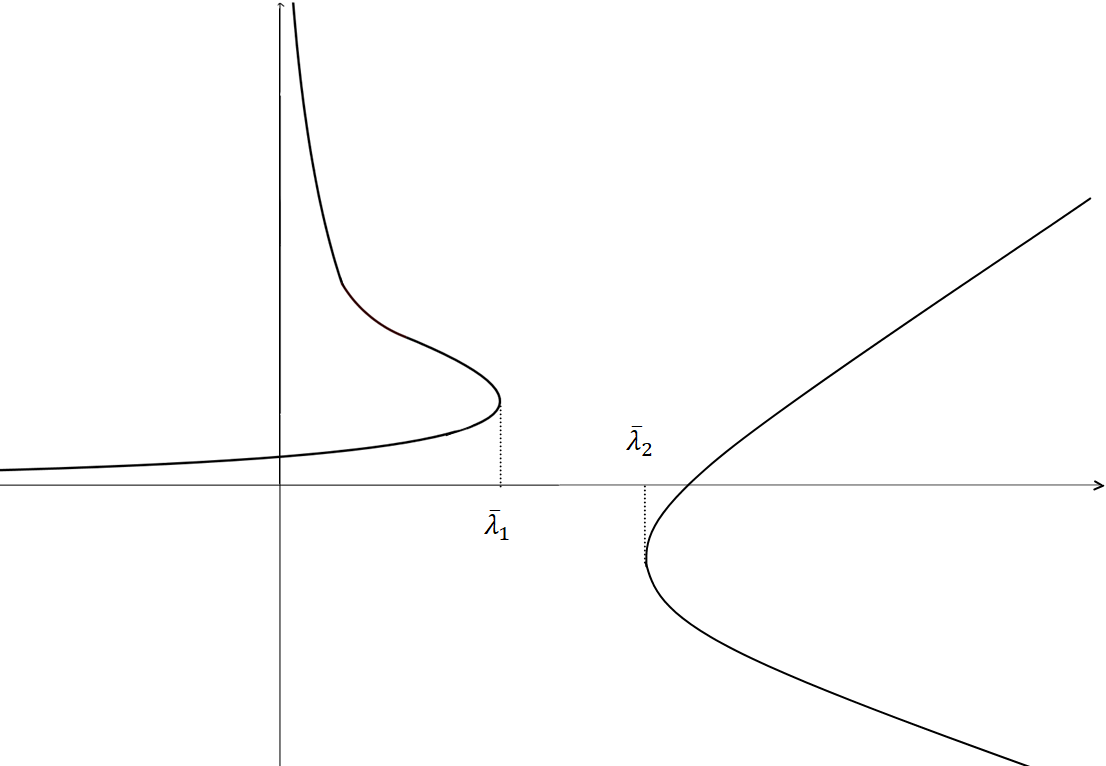}
\caption{Illustration of Theorem \ref{th1.4} for $\mu_2 h\gneqq 0$ small in $L^p$-norm.}
\label{Rotulo2}
\end{figure}

Moreover, as in item \textit{4} of Theorem \ref{th1.1,1.2,1.3}, in theorems \ref{th1.5} and \ref{th1.4} the solutions $u_{\lambda, 1}$, $u_{\lambda , 2}$ are ordered in at least one block; and $u_{\lambda, 1} \ll u_{\lambda , 2}$ in the sense of definition \ref{def1.2}, for all $\lambda>0$ if \eqref{Plambda} is fully coupled, see Claim \ref{ulambda,1 leq ulambda,2 th1.3}.

We remark that the hypotheses $u_0\le 0$, resp $u_0\ge0$, of the above theorems are implied for instance by $h\le 0$, resp $h\ge0$. See Remark 6.25 of \cite{jfa19} for a proof.

We stress that theorems \ref{apriori}--\ref{th1.4} are new even for systems involving the Laplacian operator.
Moreover, the second part in Theorem \ref{th1.4} is new even for a single equation, in the context of nondivergence form operators.


\section{Preliminaries}\label{Preliminaries}

In this section we briefly recall some definitions and previous results which we use in the sequel.
More comments can be found in the preliminary section of \cite{jfa19}.

Let $F_i\,(x,\vec{p},X):\Omega\times\rN\times\mathbb{S}^N\rightarrow\real$ be a measurable function satisfying \eqref{SC}, where
$$
\mathcal{M}^+(X):=\sup_{\lambda_P I\leq A\leq \Lambda_P I} \mathrm{tr} (AX)\,,\quad \mathcal{M}^-(X):=\inf_{\lambda_P I\leq A\leq \Lambda_P I} \mathrm{tr} (AX)
$$
are the Pucci's extremal operators with constants $0<\lambda_P\leq \Lambda_P$.
See, for example, \cite{CafCab} for their properties.
Also, denote $\mathcal{L}^\pm [u]:=\mathcal{M}^\pm (D^2 u)\pm b|Du|$, for $b\ge0$.

\begin{defi}\label{def Lp-viscosity sol}
Let $f\in L^p_{\textrm{loc}}(\Omega)^n$.
We say that $u\in C(\Omega)$ is an $L^p$-viscosity subsolution $($respectively, supersolution$)$ of the system $F[u]=f(x)$ in $\Omega$ if, for each $i\in \{1,\cdots , n\}$, whenever $\phi\in  W^{2,p}_{\mathrm{loc}}(\Omega)$, $\varepsilon>0$ and $\mathcal{O}\subset\Omega$ open are such that
\begin{align*}
F_i(x,u_i(x),D\phi(x),D^2\phi (x))-f_i(x)  \leq -\varepsilon\;\;
( F_i(x,u_i(x),D\phi(x),D^2\phi (x))-f_i(x)  \geq \varepsilon )
\end{align*}
for a.e. $x\in\mathcal{O}$, then $u_i-\phi$ cannot have a local maximum $($minimum$)$ in $\mathcal{O}$.
\end{defi}

If both $F_i$ and $f_i$ are continuous in $x$, for all $i=1,\cdots , n$, we can use the more usual notion of  \textit{$C$-viscosity} sub and supersolutions -- see \cite{user}.

On the other side, a \textit{strong} sub or supersolution belongs to $W^{2,p}_{\mathrm{loc}}(\Omega)^n$ and satisfies the inequality at almost every point. As we already mentioned, this is intrinsically connected to the notion of  $L^p$-viscosity solution; more precisely we have the following fact.
\begin{prop}\label{Lpiffstrong.quad}
Let $F_i$ satisfy \eqref{SC} and $f_i\in L^p(\Omega)$, $\mu\ge0$.
Then, $u_i\in W^{2,p}_{\mathrm{loc}}(\Omega)$ is a strong subsolution $($supersolution$)$ of $F_i[u_i]+\mu|D u_i|^2=f_i$ in $\Omega$ if and only if it is an $L^p$-viscosity subsolution $($supersolution$)$ of this equation.
\end{prop}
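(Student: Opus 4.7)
The plan is to reduce the assertion to the classical equivalence between strong and $L^p$-viscosity sub/super\-solutions for operators satisfying the structure condition \eqref{SC}, established in \cite{CCKS}, by absorbing the quadratic gradient term into the first-order part of that structure condition on compact subsets.

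The key observation is that since $p>N$, every function in $W^{2,p}_{\mathrm{loc}}(\Omega)$ is locally $C^1$ by Sobolev embedding; in particular, both $u_i$ and any admissible test function $\phi\in W^{2,p}_{\mathrm{loc}}$ have a locally bounded gradient. Setting
\[
G_i(x,\vec p,X):=F_i(x,\vec p,X)+\mu|\vec p|^2,
\]
and using the elementary bound $|\vec p|^2-|\vec q|^2\le(|\vec p|+|\vec q|)\,|\vec p-\vec q|$ together with \eqref{SC}, I would verify that on any compact $K\Subset\Omega$ on which the relevant gradients are bounded by a constant $K_0$, $G_i$ still satisfies \eqref{SC} with the modified first-order coefficient $b+2\mu K_0\in L^\infty(K)$. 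This reduction is legitimate because the equivalence in \cite{CCKS} is of local character.

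For the direction ``strong $\Rightarrow$ $L^p$-viscosity'', I would argue directly. Suppose, for contradiction, that $w:=u_i-\phi$ attains a local maximum at some $x_0\in\mathcal{O}$ while $\phi\in W^{2,p}_{\mathrm{loc}}$ obeys $G_i(x,D\phi,D^2\phi)-f_i\le-\varepsilon$ a.e.\ in $\mathcal{O}$. Subtracting this inequality from the strong subsolution inequality for $u_i$, and then applying \eqref{SC} together with the gradient bound above, yields
\[
\mathcal{M}^+(D^2 w)+\tilde b(x)|Dw|\ge\varepsilon\quad\text{a.e.\ in }\mathcal{O},
\]
with $\tilde b:=b+\mu(|Du_i|+|D\phi|)\in L^\infty_{\mathrm{loc}}(\mathcal{O})$, thanks to the $C^1$ bound on $Du_i,D\phi$. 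A strong maximum principle of V\'azquez/Bony type, applied to the nonnegative function $M-w$ (where $M$ is the local maximum value of $w$), would then force $w$ to be locally constant near $x_0$; but this is incompatible with the strict pointwise inequality $\ge\varepsilon>0$, so no such local maximum can exist.

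For the converse ``$L^p$-viscosity $\Rightarrow$ strong'', the plan is to invoke the \cite{CCKS} equivalence on a compact exhaustion of $\Omega$, using the locally modified structure condition for $G_i$ derived above. The main obstacle is verifying that the class of admissible test functions in Definition~\ref{def Lp-viscosity sol} is not weakened by the localization: concretely, given $\phi\in W^{2,p}_{\mathrm{loc}}$ and a touching point, one has to replace $\phi$ outside a small neighborhood by a cut-off with the same $2$-jet at the point, in such a way that both the $L^p$-viscosity and the strong formulations are preserved. This is a routine but somewhat delicate Lebesgue-point / cut-off manipulation; once it is in place, the pointwise a.e.\ inequality $F_i(x,Du_i,D^2u_i)+\mu|Du_i|^2\ge f_i$ follows from the \cite{CCKS} theorem applied to $G_i$, and the analogous argument handles supersolutions.
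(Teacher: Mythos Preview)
The paper does not give its own proof of this proposition; it simply cites Theorem~3.1 and Proposition~9.1 of Koike--\'Swiech \cite{KSweakharnack}. Your sketch is essentially a reconstruction of that argument, and the central idea---that $p>N$ forces $u_i\in C^1_{\mathrm{loc}}$, so the quadratic gradient term can be absorbed into the first-order coefficient of \eqref{SC} on compact subsets---is exactly the right one.

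A couple of minor comments. For the direction ``strong $\Rightarrow$ $L^p$-viscosity'' your argument is correct, but you do not need a V\'azquez-type strong maximum principle: the classical Bony maximum principle for $W^{2,p}$ functions (namely $\operatorname*{ess\,liminf}_{x\to x_0}\mathcal{M}^+(D^2w(x))\le 0$ at a local maximum $x_0$), together with $Dw(x_0)=0$ and the local boundedness of $\tilde b$, already contradicts $\mathcal{M}^+(D^2w)+\tilde b|Dw|\ge\varepsilon$ a.e.\ near $x_0$. For the converse, your plan to invoke the \cite{CCKS} equivalence with the locally modified structure condition is the right route, but note that the modified \eqref{SC} with coefficient $b+2\mu K_0$ only holds for $|\vec p|,|\vec q|\le K_0$, not for all $\vec p,\vec q$; so one cannot apply the CCKS theorem to $G_i$ as a black box. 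The clean fix---and this is what Koike--\'Swiech do---is to build the test function as a perturbation $\phi=u_i-\delta\psi$ of $u_i$ itself (with $\psi\in W^{2,p}$ solving an auxiliary extremal equation), so that $D\phi$ stays uniformly close to $Du_i$ on the relevant ball and only a bounded range of gradients ever enters the computation. Your remark that this step is ``routine but somewhat delicate'' is fair, but that is where the actual work in the cited reference lies.
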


See Theorem 3.1 and Proposition 9.1 in \cite{KSweakharnack} for a proof.
For scalar equations it is also well known that the pointwise maximum of subsolutions, or supremum over any set $($if this supremum is locally bounded$)$, is still a subsolution, see \cite{KoikePerron}.

The next proposition follows from Theorem 4 in \cite{arma2010} or Proposition 9.4 in \cite{KSweakharnack}.

\begin{prop} {$($Stability$)$} \label{Lpquad}
Let $F$, $F_k$ be scalar operators satisfying \eqref{SC}, $p> N$, $f, \, f_k\in L^p(\Omega)$, $u_k\in C(\Omega)$ an $L^p$-viscosity subsolution $($supersolution$)$ of
$$
F_k(x,u_k,Du_k,D^2u_k)+\langle M(x) Du_k,Du_k\rangle \geq(\leq) f_k(x) \;\;\textrm{in} \;\;\Omega\, , \;\textrm{ for all } k\in \n .
$$
Suppose $u_k\rightarrow u$ in $L_{\mathrm{loc}}^\infty  (\Omega)$ as $k\rightarrow \infty$ and, for each $B\subset\subset \Omega$ and $\varphi\in W^{2,p}(B)$, if we set
\begin{align*}
g_k(x):=F_k(x,u_k,D\varphi,D^2\varphi) \rangle- f_k(x) \, , \;
g(x):=F(x,u,D\varphi,D^2\varphi)-f(x)
\end{align*}
we have $\| (g_k-g)^+\|_{L^p(B)}$ $(\| (g_k-g)^-\|_{L^p(B)}) \rightarrow 0$ as $k\rightarrow \infty$. Then $u$ is an $L^p$-viscosity subsolution $($supersolution$)$ of\;
$
F(x,u,Du,D^2u)+\langle M(x)Du,Du\rangle  \geq(\leq) f(x) \,\textrm{ in }\, \Omega\, .
$
\end{prop}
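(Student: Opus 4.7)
The plan is to adapt the standard stability argument for $L^p$-viscosity solutions (see \cite{CCKS, KSweakharnack}) to accommodate the quadratic gradient term. I treat only the subsolution case; the supersolution case is analogous with reversed inequalities.

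Assume, for contradiction, that $u$ fails to be an $L^p$-viscosity subsolution of the limit equation. Then there exist $\varphi \in W^{2,p}_{\mathrm{loc}}(\Omega)$, an open set $\mathcal{O} \subset\subset \Omega$, $\varepsilon > 0$, and $\hat{x} \in \mathcal{O}$ at which $u - \varphi$ attains a local maximum, while
\begin{equation*}
F(x, u, D\varphi, D^2\varphi) + \langle M(x)D\varphi, D\varphi\rangle - f(x) \leq -\varepsilon \quad \text{a.e. in } \mathcal{O}.
\end{equation*}
By adding a small multiple of $|x - \hat{x}|^4$ to $\varphi$ and shrinking $\mathcal{O}$ to a ball $B = B_\rho(\hat{x})$, I make $\hat{x}$ a strict maximum of $u - \varphi$ over $\overline{B}$ while preserving the a.e.\ inequality with slack $-\varepsilon/2$; the local uniform convergence $u_k \to u$ then forces, for all $k$ large, an interior maximum point $\hat{x}_k \in B$ of $u_k - \varphi$ over $\overline{B}$.

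The core step is the construction of a corrector $\psi_k \in W^{2,p}(B) \cap C(\overline{B})$ with zero boundary values, defined as the $L^p$-viscosity solution of the Pucci extremal equation
\begin{equation*}
\mathcal{M}^+(D^2\psi_k) + b|D\psi_k| = -(g_k - g)^+(x) \quad \text{in } B, \qquad \psi_k = 0 \quad \text{on } \partial B.
\end{equation*}
Existence and the $W^{2,p}$ estimate $\|\psi_k\|_{W^{2,p}(B)} \leq C\|(g_k - g)^+\|_{L^p(B)}$ come from the solvability and regularity theory recalled in the paper, so $\|\psi_k\|_{W^{2,p}(B)} \to 0$ by hypothesis. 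Because $p > N$, Morrey embedding yields $\|\psi_k\|_{C^1(\overline{B})} \to 0$, ensuring that $u_k - (\varphi + \psi_k)$ still attains an interior maximum in $B$ for large $k$. Using (SC) to compare $F_k$ at $(D\varphi + D\psi_k, D^2\varphi + D^2\psi_k)$ and at $(D\varphi, D^2\varphi)$, together with the expansion $\langle M D(\varphi+\psi_k), D(\varphi+\psi_k)\rangle = \langle M D\varphi, D\varphi\rangle + O(\|D\psi_k\|_\infty)$, I obtain a.e.\ in $B$
\begin{equation*}
F_k(x, u_k, D(\varphi+\psi_k), D^2(\varphi+\psi_k)) + \langle M D(\varphi+\psi_k), D(\varphi+\psi_k)\rangle - f_k \leq g_k - (g_k - g)^+ + \langle M D\varphi, D\varphi\rangle + o_k(1),
\end{equation*}
where $o_k(1) \to 0$ in $L^\infty(B)$. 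Since $g_k - (g_k-g)^+ \leq g$, the right-hand side is bounded above by $-\varepsilon/2 + o_k(1) \leq -\varepsilon/4$ a.e.\ for $k$ sufficiently large. This strict a.e.\ inequality on the open set $B$, combined with the interior maximum of $u_k - (\varphi + \psi_k)$ in $B$, contradicts the $L^p$-viscosity subsolution property of $u_k$, completing the argument.

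The main obstacle is accommodating the quadratic gradient term, which is not subsumed by (SC). Its control rests on two ingredients: the Morrey embedding $W^{2,p} \hookrightarrow C^1$ valid for $p > N$, which turns $L^p$ smallness of $D^2\psi_k$ into $C^1$ smallness of $\psi_k$ and thereby bounds the cross and purely quadratic remainders arising from replacing $Du$ by $D\varphi + D\psi_k$; and the $L^p$-viscosity solvability with sharp $W^{2,p}$ estimates for the extremal equation defining $\psi_k$, which ensures the corrector is small in exactly the right norm to absorb both the $(g_k-g)^+$ error and the gradient perturbations into the slack $-\varepsilon/2$.
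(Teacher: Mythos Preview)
Your argument is correct and follows the standard corrector-based stability proof for $L^p$-viscosity solutions, adapted to handle the quadratic gradient term via the Morrey embedding $W^{2,p}\hookrightarrow C^1$. The paper itself does not give a proof of this proposition; it simply states that it ``follows from Theorem 4 in \cite{arma2010} or Proposition 9.4 in \cite{KSweakharnack}.'' What you have written is essentially a sketch of the argument in those references: the corrector $\psi_k$ solving an extremal Pucci problem with right-hand side $-(g_k-g)^+$, its $W^{2,p}$ (hence $C^1$) smallness, and the pointwise comparison $g_k-(g_k-g)^+\le g$ are exactly the ingredients used there. Your explicit identification of why the quadratic term requires $C^1$ rather than merely $C^0$ control of $\psi_k$ is the one place where the argument goes beyond the linear-growth case, and you handle it correctly.
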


The following result follows from Lemma 2.3 in \cite{arma2010}, see also the appendix of \cite{jfa19}.

\begin{lem}{$($Exponential change$)$}\label{lemma2.3arma}
Let $p> N$ and $u\in C(\Omega)$. For $m>0$ set
$mv=e^{mu}-1$ and $ mw=1-e^{-mu}$.
Then the following inequalities hold in the $L^p$-viscosity sense
\begin{align*}
\mathcal{M}^\pm (D^2 u)+m\lambda_P |Du|^2 &\leq \frac{\mathcal{M}^\pm (D^2 v)}{1+mv} \leq \mathcal{M}^\pm (D^2 u)+m\Lambda_P |Du|^2 , \\
\mathcal{M}^\pm (D^2 u)-m\Lambda_P |Du|^2 &\leq \frac{\mathcal{M}^\pm (D^2 w)}{1-mw} \leq \mathcal{M}^\pm (D^2 u)-m\lambda_P |Du|^2 .
\end{align*}
\end{lem}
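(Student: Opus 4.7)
The plan is to verify the inequalities first pointwise for $u \in C^2(\Omega)$ by direct differentiation, and then upgrade to the $L^p$-viscosity setting by a test-function change of variables through the diffeomorphism $u \leftrightarrow v$ (resp.\ $u \leftrightarrow w$). The two sides of each inequality will come from the standard sub/superadditivity bounds for Pucci operators applied to a rank-one perturbation.

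First I would treat the smooth $v$-case. Since $1 + mv = e^{mu}$, differentiating gives $Dv = e^{mu} Du$ and $D^2 v = e^{mu}(D^2 u + m\, Du \otimes Du)$; dividing by $1 + mv$ yields the key identity
\[
\frac{D^2 v}{1+mv} \;=\; D^2 u + m\, Du \otimes Du.
\]
The perturbation $m\, Du \otimes Du$ is positive semidefinite of rank at most one, with single positive eigenvalue $m|Du|^2$, so $\mathcal{M}^-(m\, Du \otimes Du) = m\lambda_P|Du|^2$ and $\mathcal{M}^+(m\, Du \otimes Du) = m\Lambda_P|Du|^2$. Applying the standard sub/superadditivity of the Pucci operators,
\[
\mathcal{M}^\pm(A) + \mathcal{M}^-(B) \;\le\; \mathcal{M}^\pm(A+B) \;\le\; \mathcal{M}^\pm(A) + \mathcal{M}^+(B),
\]
with $A = D^2 u$ and $B = m\, Du \otimes Du$, gives the first pair of inequalities. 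The $w$-case is analogous: from $1 - mw = e^{-mu}$ one gets $D^2 w/(1-mw) = D^2 u - m\, Du \otimes Du$, and now $-m\, Du \otimes Du$ has extremal eigenvalues $\mathcal{M}^-(-m\, Du \otimes Du) = -m\Lambda_P|Du|^2$ and $\mathcal{M}^+(-m\, Du \otimes Du) = -m\lambda_P|Du|^2$, yielding the second pair.

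To extend to merely continuous $u$, I would exploit that $\Phi(s) := m^{-1}(e^{ms}-1)$ is a smooth strictly increasing bijection from $\mathbb{R}$ onto $(-1/m, \infty)$ with smooth inverse $\Phi^{-1}(t) = m^{-1}\log(1+mt)$. To transfer, say, a viscosity (sub)solution property for $u$ to one for $v$: given any test $\phi \in W^{2,p}_{\mathrm{loc}}$ such that $v - \phi$ has a local maximum at $x_0$, first normalize by a constant so $\phi(x_0) = v(x_0)$; then on a neighborhood where $1 + m\phi > 0$ (which holds by continuity since $1 + mv(x_0) > 0$), the function $\psi := \Phi^{-1}(\phi)$ is well-defined and lies in $W^{2,p}_{\mathrm{loc}}$ by the Sobolev chain rule. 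Monotonicity of $\Phi^{-1}$ together with $\phi(x_0) = v(x_0)$ ensures that $u - \psi$ has a local maximum at $x_0$. A direct chain-rule computation a.e.\ yields
\[
D^2\phi \;=\; (1+m\phi)\bigl(D^2\psi + m\, D\psi \otimes D\psi\bigr),
\]
and positive homogeneity of $\mathcal{M}^\pm$ then gives $\mathcal{M}^\pm(D^2\phi)/(1+m\phi) = \mathcal{M}^\pm(D^2\psi + m\, D\psi \otimes D\psi)$ pointwise a.e. Combined with the sub/superadditivity estimate applied to $\psi$, this converts the viscosity inequality for $u$ (tested against $\psi$) into the desired viscosity inequality for $v$ (tested against $\phi$). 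The reverse direction and the $w$-case are handled symmetrically, using the smooth inverse change $\Psi(s) := m^{-1}(1-e^{-ms})$.

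The main obstacle is bookkeeping in this test-function correspondence: one must check (i) that the Sobolev composition $\psi = \Phi^{-1}(\phi)$ genuinely lies in $W^{2,p}_{\mathrm{loc}}$ with the expected chain-rule derivatives (standard since $\Phi^{-1}$ is smooth and $1+m\phi$ is locally bounded away from $0$), (ii) that the constant-shift normalization aligning the extrema of $u-\psi$ and $v-\phi$ is compatible with the $L^p$-viscosity definition (it is, since adding a constant to $\phi$ does not affect $D\phi, D^2\phi$ or the a.e.\ inequality tested in Definition \ref{def Lp-viscosity sol}), and (iii) that the a.e.\ algebraic identity between the Hessians of $\phi$ and $\psi$ can be propagated through the $\varepsilon$-strict quantifier in that definition. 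Once these routine verifications are in place, the lemma reduces to the elementary Pucci computation displayed above.
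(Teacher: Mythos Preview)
Your argument is correct and is precisely the standard one: the paper does not give its own proof but refers to Lemma~2.3 in \cite{arma2010} and the appendix of \cite{jfa19}, where exactly this computation (chain rule yielding $D^2 v/(1+mv)=D^2 u + m\,Du\otimes Du$, then Pucci sub/superadditivity on the rank-one term, followed by the test-function correspondence through the smooth monotone change $\Phi$) is carried out. Your bookkeeping in points (i)--(iii) is the right checklist for the $L^p$-viscosity transfer, and nothing further is needed.
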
\vspace{0.1cm}

\vspace{0.05 cm}

The following scalar estimates will play a pivotal role in our proofs. The first one is a global variant of the Local Maximum Principle (LMP);
 see \cite{tese, B2016} for a proof.

\begin{teo}[GLMP]\label{LMP}
Let $u$ be a locally bounded $L^p$-viscosity subsolution of
\begin{align*}
\left\{
\begin{array}{rclcl}
\mathcal{L}^+ (D^2 u)+\nu (x) u &\geq & -f(x) &\mbox{in} &\Omega \\
u &\leq & 0 &\mbox{on} & \partial\Omega
\end{array}
\right.
\end{align*}
with $f\in L^p (\Omega)$, $\nu\in L^{p_1} (\Omega)$, for some $p, p_1>N$.
Then, for each $r>0$,
\begin{align*}
 \sup_{\Omega} u^+ \leq C \left( \left(  \int_{\Omega} (u^+)^r \right)^{1/r} + \|f^+\|_{L^p(\Omega)} \right),
\end{align*}
where $C$ depends only on $N,\,p,\, p_1,\,\lambda, \,\Lambda,\, r,\, b$, and $\,\|\nu\|_{L^{p_1}(\Omega)}$.
\end{teo}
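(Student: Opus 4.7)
The plan is to reduce the global statement to the classical interior Local Maximum Principle by extending $u^+$ by zero across $\partial\Omega$ and applying the interior result on a large enclosing ball. The hypothesis $u\le 0$ on $\partial\Omega$ makes the zero extension continuous, so no genuine boundary estimate is needed: the passage across $\partial\Omega$ is absorbed into a subsolution pasting argument.

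Concretely, I would fix $B_R$ with $\Omega \Subset B_{R/2}$ and define $v:=u^+$ on $\Omega$, $v:=0$ on $B_R\setminus\Omega$. Since $u\in C(\overline{\Omega})$ with $u\le 0$ on $\partial\Omega$, $v$ is continuous on $\overline{B_R}$. The key technical step is to verify that $v$ is an $L^p$-viscosity subsolution of
\[
\mathcal{L}^+(D^2 v)+\nu^+(x)\,v \;\geq\; -f^+(x)\chi_\Omega(x) \qquad \text{in }B_R.
\]
On $\Omega$, using that $\nu(x)u\le \nu^+(x)u^+$ and $f\le f^+$ on $\{u>0\}$, both $u$ and the zero function are $L^p$-viscosity subsolutions of this modified inequality; their pointwise maximum $v=\max(u,0)$ is again a subsolution by the standard Perron-type pasting for proper operators (cf.\ \cite{KoikePerron, KSweakharnack}). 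Outside $\Omega$, $v\equiv 0$ satisfies the inequality trivially, and the gluing across $\partial\Omega$ is part of the same maximum argument performed on all of $B_R$.

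With the extended inequality in place, the interior Local Maximum Principle for $L^p$-viscosity subsolutions with unbounded zero-order coefficient (see \cite{CCKS, KSweakharnack}) yields, for any $r>0$,
\[
\sup_{B_{R/2}} v \;\leq\; C\left( \Big(\int_{B_R} v^r\Big)^{1/r} + \|f^+\|_{L^p(\Omega)}\right),
\]
with $C$ depending on the quantities listed in the statement together with $R$ (hence on $\mathrm{diam}(\Omega)$). Since $\Omega\subset B_{R/2}$ and $v\equiv u^+$ on $\Omega$ while $v\equiv 0$ outside, this is the conclusion of Theorem~\ref{LMP}. If the interior LMP is first established only for some natural exponent $r_0$, the case of small $r>0$ follows by the interpolation $\bigl(\int (u^+)^{r_0}\bigr)^{1/r_0}\le (\sup u^+)^{1-r/r_0}\bigl(\int (u^+)^{r}\bigr)^{r/r_0}$, where the factor $(\sup u^+)^{1-r/r_0}$ with exponent $<1$ is absorbed into the left-hand side.

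The main obstacle is the rigorous verification that the zero extension of $u^+$ is an $L^p$-viscosity subsolution on $B_R$, especially across $\partial\Omega$ where $u$ has only $C(\overline{\Omega})$ regularity. This hinges on two facts that are standard but technically delicate in the $L^p$-viscosity framework: the stability of the subsolution property under pointwise maxima of two subsolutions, and the fact that the constant $0$ is a subsolution of the modified inequality thanks to the nonnegativity of $\nu^+$ and $f^+$. Both must be checked against test functions in $W^{2,p}_{\mathrm{loc}}$; once this is done, the rest of the proof is a direct application of the cited interior estimate.
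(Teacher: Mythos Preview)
The paper does not supply a proof of this theorem; it only records the statement and cites \cite{tese, B2016}. Your approach---extend $u^+$ by zero across $\partial\Omega$ using the boundary condition $u\le 0$, verify the resulting function is an $L^p$-viscosity subsolution of the modified inequality on an enclosing ball via a Perron-type pasting, and then invoke the interior Local Maximum Principle---is correct and is in fact the standard route to such a global estimate. The technical point you flag (that the pasting of $\max(u,0)$ with the exterior zero must be checked against $W^{2,p}_{\mathrm{loc}}$ test functions, including those centered on $\partial\Omega$) is indeed the only nontrivial step, and the references you cite are the right ones for it. One minor remark: your argument produces a constant that also depends on $\mathrm{diam}(\Omega)$ through the radius $R$ of the enclosing ball; the paper's statement does not list this dependence, but it is harmless for the applications in Section~\ref{section a priori}, where the a priori bound is already allowed to depend on $\mathrm{diam}(\Omega)$.
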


We recall the following two global scalar versions of the quantitative strong maximum principle (QSMP) and the weak Harnack inequality (WHI), which follow from theorems 1.1 and 1.2 in \cite{B2016}.
Denote $d=d(x)=dist(x, \partial \Omega)$.

\begin{teo}[GQSMP]\label{QSMP}
Assume  $u$ is an $L^p$ viscosity supersolution of $\mathcal{L}^- [u] - gu \le f$, $u \ge 0$ in $\Omega$, and let $f, g \in L^p(\Omega)$, $p>n$. Then there exist constants $\varepsilon, c, C>0$ depending on $n, \lambda, \Lambda, b, p, $ and $\norm{g}_p$ such that
\[ \inf_{\Omega} \frac{u}{d} \ge c \left( \int_{\Omega} (f^-)^{\varepsilon} \right)^{1/\varepsilon} - C \norm{f^+}_p . \]
\end{teo}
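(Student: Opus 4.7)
The plan is to prove this boundary-weighted quantitative strong maximum principle by combining the interior weak Harnack inequality of Krylov--Safonov type with a Hopf-type barrier argument near $\partial \Omega$, in the spirit of the boundary extensions of \cite{B2016}.

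First I would reduce to a cleaner setting. By Proposition \ref{Lpiffstrong.quad}, we may work with $u \in W^{2,p}_{\mathrm{loc}}(\Omega)$ satisfying the inequality almost everywhere. The zero-order term $-gu$ with $u\ge 0$ and $g \in L^p$, $p>N$, can be absorbed into the source: localizing on balls whose radius is small relative to $\|g\|_p$, a standard iteration treats $g$ as a perturbation, so it suffices to focus on the pure supersolution $\mathcal{L}^-[u] \le f$, with the dependence on $\|g\|_p$ entering only through the constants $c$ and $C$.

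Next, to separate the $\|f^+\|_p$ contribution, I would introduce an auxiliary Dirichlet problem: let $w \in W^{2,p}(\Omega)$ solve $\mathcal{L}^+[w] = -f^+$ in $\Omega$ with $w=0$ on $\partial\Omega$. Global $W^{2,p}$ estimates combined with the classical Hopf lemma yield $\|w/d\|_\infty \le C\|f^+\|_p$. Setting $\hat u := u + w \ge 0$, the structural inequality \eqref{SC} and standard viscosity calculus show that $\hat u$ is an $L^p$-viscosity supersolution of $\mathcal{L}^-[\hat u] \le -f^-$, modulo a first-order perturbation that is reabsorbed in the same way. It is then enough to establish
\[ \inf_\Omega \frac{\hat u}{d} \;\ge\; c \left(\int_\Omega (f^-)^{\varepsilon}\right)^{1/\varepsilon} \]
for some $\varepsilon, c>0$ depending only on the structural data.

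Finally, this weighted lower bound I would obtain in two pieces. In the interior, for $x_0$ with $d(x_0) \ge \delta_0$, applying the interior WHI on a chain of balls linking $x_0$ to the support of $f^-$, together with a Krylov--Safonov covering argument, delivers a pointwise bound $\hat u(x_0) \ge c_1 (\int_\Omega (f^-)^\varepsilon)^{1/\varepsilon}$. Near the boundary, comparison with a barrier built from the distance function $d$, using the $C^{1,1}$ regularity of $\partial\Omega$ to ensure a uniform exterior sphere condition and thus to produce a subsolution of $\mathcal{L}^-$ in a thin tubular neighborhood with the right Hopf-type normal derivative, upgrades the pointwise estimate to the weighted form $\hat u/d \ge c (\cdots)$ uniformly up to $\partial \Omega$. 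I expect the boundary step to be the main obstacle: the barrier must be simultaneously a subsolution of $\mathcal{L}^-$ with a prescribed normal derivative and must survive the drift term $b|Du|$, and one has to track how the $L^\varepsilon$-type quantity $(\int (f^-)^\varepsilon)^{1/\varepsilon}$ propagates across the comparison. This is precisely what Theorems 1.1--1.2 of \cite{B2016} accomplish, and it is from them that the stated GQSMP is extracted.
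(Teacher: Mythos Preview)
The paper does not actually prove this theorem: it is quoted as a known result, with the sentence ``which follow from theorems 1.1 and 1.2 in \cite{B2016}'' and the remark that ``in \cite{B2016}, theorems \ref{QSMP} and \ref{WHI} are proved for $g\equiv 0$, but exactly the same proofs there work for any $g\ge0$. Moreover, since the function $u$ has a sign, $g^-u\geq 0$ and they are also valid for nonproper operators.'' That is the entirety of the paper's argument. Your sketch therefore goes well beyond what the paper does; you are in effect outlining the proof of \cite{B2016} itself, and you say so explicitly at the end. In that sense there is nothing to compare: the paper defers to \cite{B2016}, and your outline is a plausible summary of the strategy there (split off $f^+$ by an auxiliary problem, interior weak Harnack plus chaining, Hopf-type barrier at the boundary).

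One genuine slip in your reduction step: invoking Proposition~\ref{Lpiffstrong.quad} to ``work with $u\in W^{2,p}_{\mathrm{loc}}$'' is not legitimate. That proposition says that \emph{if} $u\in W^{2,p}_{\mathrm{loc}}$ then the strong and $L^p$-viscosity notions coincide; it does not give any regularity to an arbitrary $L^p$-viscosity supersolution, and in general such supersolutions are not $W^{2,p}$. The proof in \cite{B2016} works directly at the viscosity level (the weak Harnack inequality and the barrier comparison are both valid for viscosity supersolutions), so this step is unnecessary and, as stated, incorrect. Otherwise your decomposition---absorb $g$ via the sign of $u$, peel off $f^+$ with a Dirichlet auxiliary solved by $\mathcal{L}^+$, then combine interior WHI with a boundary barrier---is the right shape, and the subadditivity $\mathcal{L}^-[u+w]\le \mathcal{L}^-[u]+\mathcal{L}^+[w]$ (with $w$ strong) handles the nonlinearity you flagged as a ``first-order perturbation.''
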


\begin{teo}[GWHI]\label{WHI}
Suppose $g, f \in L^p$, $p>n$. Assume  $u$ is an $L^p$ viscosity supersolution of $\mathcal{L}^-[u] - gu \le f$, $ u\ge 0$ in $\Omega$. Then there exist constants $\varepsilon, c, C>0$ depending on $n, \lambda, \Lambda, b, p$ and $\norm{g}_p$ such that
\[ \inf_{\Omega} \frac{u}{d} \ge c \left( \int_{\Omega} \left( \frac{u}{d} \right)^{\varepsilon} \right)^{1/\varepsilon} - C \norm{f^+}_p . \]
\end{teo}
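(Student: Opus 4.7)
The plan is to derive the global weak Harnack by combining the classical interior weak Harnack (in its $L^p$-viscosity form, \cite{CCKS, KSweakharnack}) with the boundary weak Harnack inequality proved as Theorem 1.2 of \cite{B2016}, and then patching the two local estimates together with a finite covering / Harnack-chain argument that exploits the $C^{1,1}$ regularity of $\partial\Omega$. The first step is to dispose of the zero-order coefficient: since $u\ge 0$, I rewrite the hypothesis as $\mathcal{L}^-[u]\le f+g^+u$ and invoke the standard stability and normalization procedures from \cite{KSweakharnack}, so that the unbounded coefficient $g\in L^p$ only influences the final constants through $\|g\|_p$.

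Next I assemble the two local ingredients. On every ball $B_{2r}(x_0)\subset\Omega$ the interior weak Harnack gives
\[
\Big(r^{-N}\!\!\int_{B_r(x_0)} u^{\varepsilon_0}\Big)^{1/\varepsilon_0}\le C\Big(\inf_{B_r(x_0)} u+r^{2-N/p}\|f^+\|_{L^p(B_{2r})}\Big),
\]
while on a boundary ball $\Omega_{x_0,r}=\Omega\cap B_r(x_0)$, $x_0\in\partial\Omega$, Theorem 1.2 of \cite{B2016} yields
\[
\Big(\int_{\Omega_{x_0,r/2}} (u/d)^{\varepsilon_0}\Big)^{1/\varepsilon_0}\le C\Big(\inf_{\Omega_{x_0,r}\cap\{d=r/2\}} (u/d)+\|f^+\|_{L^p(\Omega_{x_0,r})}\Big).
\]
Using the $C^{1,1}$ regularity of $\partial\Omega$, I fix a small $r_0$ and cover $\overline\Omega$ by finitely many balls of common size $r_0$: boundary balls, on which the second inequality applies, and interior balls lying in $\{d>\delta\}$, where $u$ and $u/d$ are comparable up to a constant $\sim 1/\delta$. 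The chain is built so that any two of these balls are connected by a sequence of overlapping balls each of which contains an interior ball of comparable size where both halves of Harnack's inequality hold.

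The third step is to iterate along such a chain: at each link, the $L^{\varepsilon_0}$ integral of $u/d$ on one ball is controlled by $\inf(u/d)$ on a neighbor plus a contribution of $\|f^+\|_p$, with multiplicative constants depending only on the fixed covering. After a bounded number of steps one arrives at a ball containing the point realizing $\inf_\Omega(u/d)$, and summing the error terms over the finite chain produces the claimed estimate with a (possibly smaller) exponent $\varepsilon$. The main obstacle is precisely the one-sidedness of the weak Harnack: since only an $\inf$ bounds an $L^{\varepsilon_0}$ norm (not conversely), transferring an $L^{\varepsilon_0}$ norm from one ball to a neighbour requires a Vitali-type subadditivity together with a lowering of the exponent so that the finite iteration does not degrade the constant $c$ to zero. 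The second delicate point is tracking how the constant $\|g\|_p$ propagates through the normalizations and rescalings in the covering so that the final dependence on $n,\lambda_P,\Lambda_P,b,p,\|g\|_p$ is uniform.
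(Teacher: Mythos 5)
Your proposal correctly identifies the ingredients behind this theorem, which the paper itself justifies essentially by citation: it recalls that Theorems 1.1--1.2 of \cite{B2016} give the (boundary, hence global) weak Harnack inequality for $g\equiv 0$, notes that the same proofs work for $g\ge 0$, and observes that $g^-u\ge 0$ when $u\ge 0$ lets one pass to the proper coefficient $g^+$ --- which is precisely your rewriting $\mathcal{L}^-[u]\le f+g^+u$. So the route is the same; what you have done is unpack the covering/chaining argument that is already carried out inside \cite{B2016}, rather than introduce a genuinely different method.

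One small correction to the chaining step: there is no need to lower the exponent along the chain, nor to invoke a Vitali-type subadditivity. Passing from a ball $B_j$ to a neighbouring $B_{j+1}$ with substantial overlap uses the trivial bound $\inf_{B_{j+1}}(u/d)\le\inf_{B_j\cap B_{j+1}}(u/d)$, which is dominated by the $L^{\varepsilon_0}$-average of $u/d$ over the overlap, hence (up to a fixed geometric constant) by the $L^{\varepsilon_0}$-average over $B_j$, hence by $C\bigl(\inf_{B_j}(u/d)+\norm{f^+}_p\bigr)$ via the local weak Harnack on $B_j$. Since the covering of $\overline\Omega$ is finite, the accumulated multiplicative factor $C^k$ is harmless, and the exponent $\varepsilon=\varepsilon_0$ survives unchanged; at the end one simply sums the integrals over the covering balls to control $\int_\Omega(u/d)^{\varepsilon_0}$ by $\bigl(\inf_\Omega(u/d)\bigr)^{\varepsilon_0}+\norm{f^+}_p^{\varepsilon_0}$.
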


In \cite{B2016}, theorems \ref{QSMP} and \ref{WHI} are proved for $g\equiv 0$, but exactly the same proofs there work for any $g\ge0$. Moreover, since the function $u$ has a sign, $g^-u\geq 0$ and they are also valid for nonproper operators.
Theorem \ref{WHI} implies, in particular, the strong maximum principle (SMP) for single equations when $f=0$, i.e. for $\Omega\in C^{1,1}$ and $u$ an $L^p$-viscosity solution of $\mathcal{L}^-[u]-gu\leq 0$, $u\geq 0$ in $\Omega$, where $g\in L^p(\Omega)$, we have either $u\equiv 0$ in $\Omega$ or $u>0$ in $\Omega$; in the latter case, if $u(x_0)=0$ at $x_0\in\partial\Omega$, then $\partial_\nu u (x_0)>0$, by Hopf lemma.
We are going to refer to these simply as SMP and Hopf throughout the text.

\section{A priori estimates for systems}\label{section a priori}

This section contains the proof of Theorem \ref{apriori}, that is,  we establish uniform a priori bounds for the system \eqref{Plambda}.
We will develop ideas in \cite{note, Sirakov19}.

For simplicity, we carry over the proofs in the model case $n=2$. We just refer to the differences from the general case when  needed.

\subsection{Estimates from below}\label{EB}

The first step to obtain a priori estimates, as in \cite[Section 5]{jfa19}, is to prove that any $L^p$-viscosity supersolution of \eqref{Plambda}  is uniformly bounded from below.

\begin{teo}\label{estimatesbelow}
Suppose \eqref{SC} and let $\Lambda_2>0$. Then every $L^p$-viscosity supersolution $(u_1, \dots, u_n)$ of \eqref{Plambda} satisfies
\[ \norm{u_i^-}_{\infty} \le C_1, \quad \text{ for all } \lambda \in [0, \Lambda_2], \;\; i=1, \dots, n, \]
where $C_1$ depends only on $n, N, p, \mu_1, \Omega, \Lambda_2, \norm{b}_{\infty}, \norm{c_{ij}}_{\infty}, \norm{h_i^-}_{\infty},  \lambda_p, \Lambda_p$.
\end{teo}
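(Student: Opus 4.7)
My plan is to rewrite the quasilinear supersolution inequality componentwise as an inhomogeneous Pucci-type inequality, eliminate the quadratic gradient term via the exponential change of variables of Lemma \ref{lemma2.3arma}, estimate the negative part of the transformed function by a global maximum-principle estimate, and close the argument by a logarithmic bootstrap that exploits the fact that the inverse transformation is logarithmic.

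More precisely: from the supersolution inequality, the structure condition \eqref{SC}, $\langle M_i Du_i, Du_i\rangle\ge\mu_1|Du_i|^2$ from \eqref{M}, and the signs $c_{ij},\lambda\ge 0$ together with $-u_j\le u_j^-$, $-h_i\le h_i^-$, one deduces componentwise
\[
\mathcal{M}^-(D^2 u_i)-b|Du_i|+\mu_1|Du_i|^2 \;\le\; \lambda\sum_{j=1}^n c_{ij}(x)\,u_j^-(x)+h_i^-(x)
\]
in the $L^p$-viscosity sense. Setting $\mu_1 v_i:=1-e^{-\mu_1 u_i}$ (so $v_i<1/\mu_1$ and $v_i|_{\partial\Omega}=0$), the second inequality of Lemma \ref{lemma2.3arma} yields $\mathcal{M}^-(D^2 u_i)\ge \mathcal{M}^-(D^2 v_i)/(1-\mu_1 v_i)+\mu_1\lambda_P|Du_i|^2$; after absorbing $b|Du_i|$ into the resulting $\mu_1(\lambda_P+1)|Du_i|^2$ via Young's inequality, one arrives at
\[
\mathcal{M}^-(D^2 v_i) \;\le\; \tilde K_i(x)\,(1-\mu_1 v_i),\qquad \tilde K_i := \lambda\sum_j c_{ij}\,u_j^-+h_i^-+C_0,
\]
with $C_0$ depending only on $b,\mu_1,\lambda_P$.

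Since $1-\mu_1 v_i>0$, this is equivalent to $\mathcal{M}^-(D^2 v_i)+\mu_1\tilde K_i\,v_i\le\tilde K_i$ with $v_i|_{\partial\Omega}=0$, and the auxiliary function $w_i:=v_i^-$ then satisfies in the viscosity sense an inequality of the form $\mathcal{M}^+(D^2 w_i)+\mu_1\tilde K_i\,w_i\ge -\tilde K_i$ with $w_i|_{\partial\Omega}=0$; Theorem \ref{LMP} applied to $w_i$ delivers
\[
\|v_i^-\|_\infty \;\le\; C\bigl(\|v_i^-\|_{L^r(\Omega)}+\|\tilde K_i\|_{L^p(\Omega)}\bigr),
\]
for a constant $C$ depending on $\|\mu_1\tilde K_i\|_{L^{p_1}}$. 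The closure uses the logarithmic identity $u_i^-=\mu_1^{-1}\log(1+\mu_1 v_i^-)$, which shows that $u_j^-$, and therefore $\|\tilde K_i\|_{L^p}$ and $\|\tilde K_i\|_{L^{p_1}}$, grow only logarithmically in $\|v_j^-\|_\infty$. Summing over $i$ produces a self-consistent inequality of the form $\sum_i\|v_i^-\|_\infty\le C_3\sum_j\log(1+\mu_1\|v_j^-\|_\infty)+C_4$, and the sublinearity of the logarithm forces $\sum_i\|v_i^-\|_\infty$ to be bounded by a constant depending only on the listed data, whence $\|u_i^-\|_\infty\le C_1$ via the same logarithmic identity.

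The main obstacle is the third step: the zero-order coefficient $\mu_1\tilde K_i$ has the ``wrong'' sign for a direct ABP, and both the constant in Theorem \ref{LMP} and the integral norms on its right-hand side involve the unknowns $u_j^-$ themselves. It is precisely the logarithmic dependence of $u_j^-$ on $v_j^-$, coming from the exponential change of variables, that permits the bootstrap to close; the system coupling in $\tilde K_i$ poses no additional difficulty, since it is linear and the sum over $j=1,\dots,n$ of the logarithmic contributions preserves the sublinear growth required to bound $\sum_i\|v_i^-\|_\infty$.
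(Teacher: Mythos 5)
Your overall strategy — exponential change of variables, then a global maximum principle estimate — is in the same family as the paper's, but at the crucial fourth step the argument does not close, and the way you set up the change of variables is what loses the key structure.

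The first concrete problem is in the bootstrap itself. After Theorem~\ref{LMP} you have, in your notation,
\[
\|v_i^-\|_\infty \;\le\; C\bigl(\|v_i^-\|_{L^r(\Omega)}+\|\tilde K_i\|_{L^p(\Omega)}\bigr),
\]
where $C$ depends on $\|\mu_1\tilde K_i\|_{L^{p_1}}$. You correctly observe that $\|\tilde K_i\|_{L^p}$ and $\|\mu_1\tilde K_i\|_{L^{p_1}}$ are $O\bigl(\log(1+\max_j\|v_j^-\|_\infty)\bigr)$. But you then drop the term $\|v_i^-\|_{L^r}$, which is \emph{not} logarithmic in the unknown: a priori it can be as large as $|\Omega|^{1/r}\|v_i^-\|_\infty$. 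Setting $T=\max_j\|v_j^-\|_\infty$, the inequality you actually obtain is of the shape $T\le C(\log T)\bigl(T+\log T\bigr)$, which gives no bound at all. The sublinearity of $\log$ does not rescue you, because the dominant term on the right is linear in $T$. You would need an independent a priori $L^r$ bound on $v_i^-$ to close the loop, and no such bound is available here — indeed the zero-order coefficient $\mu_1\tilde K_i\ge 0$ in your transformed inequality has the wrong sign for ABP, as you note, so there is no quick way to control $\|v_i^-\|_{L^r}$ in terms of the data only.

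The deeper issue is the choice to apply the exponential change to $u_i$ rather than to $U_i:=u_i^-$. With your choice $\mu_1 v_i=1-e^{-\mu_1 u_i}$, the negative part $v_i^-$ is \emph{exponentially large} in $u_i^-$, so the range of the quantity you are trying to bound is unbounded, and any estimate must be genuinely quantitative — hence the bootstrap, which then fails. The paper instead applies the change to $U_i=u_i^-\ge 0$, setting $w_i=(1-e^{-mU_i})/m$ with $m=\mu_1/\Lambda_P$. This compactifies: $w_i$ takes values in $[0,1/m)$, and unboundedness of $u_i^-$ corresponds to $w_i$ approaching the finite value $1/m$. The argument then avoids quantitative estimates entirely: one considers $w=\max_i w_i$, which is a subsolution of a single scalar inequality \eqref{eqn:max}; one takes $\bar w$ to be the supremum of all $L^p$-viscosity solutions of that inequality taking values in $[0,1/m)$; a boundary ABP-type estimate (using the boundedness of $(1-t)|\ln(1-t)|$ on $[0,1]$) shows $\bar w\to 0$ at $\partial\Omega$; and a nonlinear strong maximum principle \cite[Lemma 5.3]{jfa19} shows $\bar w$ cannot touch $1/m$ in the interior. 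If a sequence of supersolutions had unbounded negative parts, the corresponding $w^k\in\mathcal A$ would force $\bar w(x_0)=1/m$ at an interior point, contradiction. This is a compactness-plus-SMP argument, not a quantitative self-consistent bound, and it is precisely what your version of the exponential change forfeits. To repair your proof you would either need to redo the change of variables à la the paper, or supply a genuine mechanism to bound $\|v_i^-\|_{L^r}$ uniformly before the LMP step.
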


\smallskip

\noindent{\it Proof.} First we take $U_i=u_i^-$ and we make the following exponential change
\[ w_i=\frac{1- e^{-mU_i}}{m}, \;\;\;i=1,2, \;\;\text{ with } m=\frac{\mu_1}{\Lambda_p}. \]
By Lemma \ref{lemma2.3arma} we know that $(w_1, w_2)$ satisfies
\begin{align*}
-\mathcal{L}_1^+[w_i]  \le\, &  \frac{\lambda}{m} c_{i1}(x)  \abs{ \ln(1-m w_1)}(1-m w_i) \\
	&+  \frac{\lambda}{m} c_{i2}(x)  \abs{ \ln(1-m w_2)}(1-m w_i)   + h^-(x) \;\; \textrm{ in } \Omega
\end{align*}
with $w_i=0$ on $\partial \Omega$, where $\mathcal{L}_1^+[w] = \mathcal{L}^+[w] - m h^- (x) w$ and $h^-=\max \{h_1^-,h_2^-\}$.

Now we consider
\begin{equation}\label{eqn:max}
\begin{cases}
-\mathcal{L}_1^+[w] \le h^-(x)+ \frac{2\lambda}{m} c(x)  \abs{ \ln(1-m w)}(1-m w) & \text{ in } \Omega \\[1ex]
\;\;w=0 & \text{ on } \partial \Omega
\end{cases}
\end{equation}
where $c=\max_{i, j} \{ c_{ij} \}$.
Notice that $w=\max \{w_1, w_2 \}$ satisfies \eqref{eqn:max}.
Define
\[ \bar w= \sup \mathcal{A}, \text{ where } \mathcal{A}:= \{ w \colon w \text{ is an $L^p$-visc.~solution of \eqref{eqn:max}; } 0 \le w < 1/m \text{ in } \Omega \}. \]
As a supremum of subsolutions, $\bar w$ is a subsolution of \eqref{eqn:max}.

Next we proceed as in \cite[Proposition 5.2]{jfa19} to prove that $\bar w \ne \frac{1}{m}$.
Indeed,
\[ \bar w(x) \le C \norm{f^+}_p \text{dist}(x, \partial \Omega) \to 0 \quad \text{ as } x \to \partial \Omega \]
where
\[ f(x)=h^-(x) + \frac{\lambda}{m} c(x) \abs{ \ln (1-m\bar w)} (1-m \bar w). \]
Assume by contradiction that there exists a sequence of supersolutions $(u_1^k, u_2^k)$ of \eqref{Plambda} with unbounded negative parts, namely there exists a subsequence such that
\[
(u_1^k)^-(x_k)=\norm{ (u_1^k)^-}_{\infty} \to \infty, \, x_k \in \bar \Omega, \, x_k \to x_0 \in \bar \Omega
\]
with $x_k \in \Omega$ for large $k$ since $u_1^k \ge 0$ on $\partial \Omega$.
One has
\[ w_1^k(x_k)=\frac{1}{m} \{ 1 - e^{-m (u_1^k)^-(x_k)} \} \to \frac{1}{m}. \]
Take $w^k=\max \{ w_1^k, w_2^k \} < 1/m$. Then,
\[ w^k (x_k) \to \frac{1}{m} \]
and $w^k \in \mathcal{A}$. In particular, for every $\varepsilon >0$ there exists $k_0$ such that
\[ \frac{1}{m} \ge \bar w(x_k) \ge w^k(x_k) \ge \frac{1}{m} - \varepsilon, \, \text{ for all } k \ge k_0 \]
thus
\[ \bar w(x_0) \ge \liminf_{x_k \to x_0} \bar w (x_k)=\lim_{k \to \infty} \bar w (x_k)=\frac{1}{m}. \]
As a consequence, $x_0 \in \Omega$ and $\bar w (x_0)=\frac{1}{m}$.
Then we reach a contradiction as in \cite[Proposition 5.2]{jfa19}, by applying a nonlinear version of the strong maximum principle \cite[Lemma 5.3]{jfa19}.

\subsection{Estimates from above} \label{EA}

First we recall that the matrix $\mathcal{C}=(c_{ij})_{i,j=1}^n$ is said to be irreducible -- equivalently we say that the system \eqref{Plambda} is fully coupled for $\lambda>0$ -- if for any nonempty sets $I, J \subset \{1, \cdots , n\}$ such that $I \cap J = \emptyset$ and $I \cup J = \{1, \cdots , n\}$, there exist $i_0 \in I$ and $j_0 \in J$ for which
\begin{align}\label{ci0j0 >0}
\mathrm{meas}\{x \in \Omega ; \; c_{i_0 j_0 } (x) > 0\} > 0.
\end{align}
This means that the system cannot be split into two subsystems in which one of them does not depend on the other. For instance, if $n=2$, it says that $c_{12}\gneqq 0$ and $c_{21}\gneqq 0$ in $\Omega$.
Of course if both $c_{12}$ and $c_{21}$ are identically zero, then we already know multiplicity from \cite{jfa19}, as soon as $c_{11}\gneqq 0$ and $c_{22}\gneqq 0$.

For simplicity, when \eqref{ci0j0 >0}  holds we write $c_{i_0 j_0}\gneqq 0$ in $\Omega$.
We can fix $\rho > 0$ such that the sets $\{x \in B_R ; \; c_{i_0 j_0} (x) \ge \rho \}$ have positive measures. Let $\omega > 0$ be a lower bound for these measures.

Then we recall our main result concerning a priori estimates for systems.
\begin{teo}\label{estimatesabove}
Suppose \eqref{SC} holds and let $\Lambda_1, \Lambda_2$ with $0<\Lambda_1 < \Lambda_2$.
Assume further that $\mathcal{C}(x)=(c_{ij})_{i,j=1}^n$ is in the block triangular form \eqref{blocktriangularform}, and that \eqref{cu0 nonzero} holds, namely $\mathcal{C}$ has no $1 \times 1$ diagonal blocks with a zero coefficient.
Then every $L^p$-viscosity solution $(u_1, \dots, u_n)$ of \eqref{Plambda} satisfies
\[ \norm{u_i}_{\infty} \le C, \text{ for all } \lambda \in [\Lambda_1, \Lambda_2], \, i=1, \dots, n, \]
where $C$ depends on $n, N, p, \mu_1, \mu_2, \mathrm{diam}\Omega, \Lambda_1, \Lambda_2, \norm{b}_{\infty}, \norm{c_{ij}}_{\infty}, \norm{h_i}_{\infty}$, $\lambda_p, \Lambda_p$, and $\omega$.
\end{teo}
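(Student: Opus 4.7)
The plan is to argue by contradiction, first reducing to the fully coupled case via the block structure \eqref{blocktriangularform}, then using an exponential linearization combined with the global half-Harnack inequalities of Section~\ref{Preliminaries} to propagate blow-up through the coupling graph until we reach a contradiction.

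First I would reduce to the case in which $\mathcal{C}$ is irreducible. The block-triangular form says $\mathcal{C}_{kl}\equiv 0$ whenever $k<l$, so for $i$ in the $k$-th block the sum $\sum_{j}c_{ij}u_j$ only involves components $u_j$ from blocks $1,\dots,k$. Inducting on $k$, once the components in the earlier blocks are uniformly bounded, their contribution enters the $k$-th block as an $L^\infty$ modification of $h_i$; the $k$-th block is then either scalar with $c_{kk}\not\equiv 0$ (by hypothesis \eqref{1x1 blocks nonzero}, covered by \cite{jfa19}) or a fully coupled system in fewer variables. Together with the lower bound of Theorem \ref{estimatesbelow}, this reduces the statement to obtaining an upper bound on $\max_i\|u_i^+\|_\infty$ when $\mathcal{C}$ is irreducible.

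In the irreducible case I would treat $n=2$ explicitly (the general case being notationally heavier but structurally identical). Assume for contradiction the existence of $\lambda_k\in[\Lambda_1,\Lambda_2]$ and $L^p$-viscosity solutions $(u_1^k,u_2^k)$ with $M_k:=\max_i\|(u_i^k)^+\|_\infty\to +\infty$. Apply the exponential change $v_i^k=(e^{mu_i^k}-1)/m$ with $m$ chosen so that the quadratic term $\langle M_i Du_i,Du_i\rangle$ is absorbed through the inequality of Lemma \ref{lemma2.3arma} (any $m\ge \mu_2/\lambda_P$ works after rewriting the equation via \eqref{SC}). The resulting $v_i^k$ satisfy in the $L^p$-viscosity sense an inequality of the form
\[
-\mathcal{L}^+[v_i^k]\;\le\;(1+mv_i^k)\Bigl(\lambda_k\sum_j c_{ij}(x)\,u_j^k + h_i^+(x)\Bigr),
\]
i.e.\ an equation whose nonlinearity has been removed, and whose right-hand side depends on the original $u_j^k$ only through the logarithmic factor $u_j^k=\frac{1}{m}\ln(1+mv_j^k)$ on the set where $v_j^k>0$.

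I would then use the global estimates to propagate the blow-up. Theorem \ref{LMP} gives $\sup v_i^k\le C_r(\|v_i^k\|_{L^r}+\|\mathrm{RHS}\|_{L^p})$, so if $v_1^k$ blows up uniformly then so does $\|v_1^k\|_{L^r}$. Irreducibility furnishes a set of measure at least $\omega$ where $c_{21}\ge\rho$; on this set the right-hand side of the equation for $v_2^k$ contains the term $\lambda_k\rho\,u_1^k\,(1+mv_2^k)$, which diverges with $u_1^k$. Applying the global weak Harnack inequality (Theorem \ref{WHI}) to $v_2^k$ forces $\inf_{\Omega}(v_2^k/d)$ to diverge, and hence $\|v_2^k\|_\infty\to\infty$; symmetrically $v_1^k$ must also blow up. Thus both components must blow up simultaneously at rates constrained by $\rho$ and $\omega$.

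The main obstacle, and the last step, is extracting a contradiction from the coupled blow-up. I would rescale, say by setting $\tilde v_i^k=v_i^k/\|v^k\|_\infty$, and pass to the limit using the stability result Proposition \ref{Lpquad}. Since the logarithmic factor $u_j^k/\|v^k\|_\infty=\ln(1+mv_j^k)/(m\|v^k\|_\infty)\to 0$, the rescaled functions satisfy in the limit a \emph{linear} cooperative inequality $-\mathcal{L}^+[\tilde v_i]\le 0$ with $\tilde v_i\ge 0$, $\tilde v_i=0$ on $\partial\Omega$, and $\max_i\|\tilde v_i\|_\infty=1$. The strong maximum principle together with the Hopf lemma (recalled at the end of Section \ref{Preliminaries}) then forces at least one $\tilde v_i\equiv 0$, while the other remains nontrivial; but the GWHI applied once more along the coupling path, using the uniform positivity $c_{ij}\ge \rho$ on a set of measure $\omega$, contradicts this decoupling. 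The delicate point is keeping track of the coupling strength after rescaling: it is precisely the quantitative bound $\rho\omega$ given by the irreducibility of $\mathcal{C}$, combined with hypothesis \eqref{1x1 blocks nonzero} in the induction step, that survives the limit and closes the argument.
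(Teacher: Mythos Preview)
Your reduction to the fully coupled case via the block-triangular structure is correct and matches the paper. The paper, however, proceeds \emph{directly}, with no contradiction and no rescaling: it shifts $u_i$ to a positive function $v_i=u_i+C_1+\delta$, applies the exponential change $w_i=(e^{m_1v_i}-1)/m_1$ with $m_1=\mu_1/\Lambda_P$ to produce a \emph{supersolution} inequality for $\mathcal{L}^-$, then uses GQSMP (Theorem~\ref{QSMP}) to bound $I_i=\inf_\Omega w_i/d$ through the coupling (this is where irreducibility enters quantitatively, via $\rho,\omega$), then GWHI (Theorem~\ref{WHI}) to bound $\|w_i\|_{L^\varepsilon}$, and finally a second exponential change $z_i=(e^{m_2u_i}-1)/m_2$ with $m_2=\mu_2/\lambda_P$ combined with GLMP (Theorem~\ref{LMP}) applied to $z=\max_i z_i$ to conclude.

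Your blow-up/rescaling route has a genuine gap at the limiting step. After dividing by $V_k:=\|v^k\|_\infty$, the right-hand side of your inequality becomes
\[
\Bigl(\tfrac{1}{V_k}+m\,\tilde v_i^k\Bigr)\Bigl(\lambda_k\textstyle\sum_j c_{ij}(x)\,u_j^k+h_i\Bigr).
\]
The first factor is $O(1)$ but does \emph{not} tend to zero (it is $\approx m\tilde v_i^k$ on the support of the limit), while $u_j^k=\tfrac1m\ln(1+mV_k\tilde v_j^k)$ is of order $\ln V_k$ wherever $\tilde v_j^k$ stays positive. The product therefore diverges on $\{c_{ij}>0\}\cap\{\tilde v_i>0\}\cap\{\tilde v_j>0\}$, and stability does not deliver the limit $-\mathcal{L}^+[\tilde v_i]\le 0$ you claim; your assertion that ``the logarithmic factor $u_j^k/\|v^k\|_\infty\to 0$'' is true but irrelevant, since that is not the quantity appearing in the rescaled right-hand side. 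There is also a sign mismatch earlier: the exponential change with $m\ge \mu_2/\lambda_P$ gives a \emph{subsolution} inequality for $\mathcal{L}^+$, which is the wrong direction for GWHI/GQSMP (both are supersolution estimates for $\mathcal{L}^-$). The paper avoids both problems by using two different exponential changes---one tailored to the supersolution half-Harnack estimates and one to the subsolution GLMP---and by never passing to a limit.
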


\begin{remark}\label{optimal}
Notice that if $\,\mathcal{C}(x)$ is in the form \eqref{blocktriangularform} and has a $1 \times 1$ diagonal block with a zero coefficient, then there is no chance of getting a priori bounds for \eqref{Plambda}, in general. Indeed, say that block is in the $i_0$-th line. Even if we could prove that all preceding functions $u_1,\ldots,u_{i_0-1}$ are uniformly bounded, then $u_{i_0}$ solves a scalar equation without a zero-order term. Specifically, $u_{i_0}$ solves an equation like $(P_0)$, but with $h_{i_0}$ replaced by $h_{i_0}+\lambda\sum_{j=1}^{i_0-1} c_{i_0j}u_j$; however, as we recalled after \eqref{H0} such an equation admits in general a priori bounds only if $h_{i_0}$ is small, while resonance phenomena may appear otherwise, see \cite{siam2010} and \cite{arma2010}.

See also section~\ref{section scalar} for a two parameter dependence in the problem \eqref{Plambda}, obtained for a large parameter $\lambda$ but a small $h$.
\end{remark}

\begin{remark}
Clearly, if \eqref{Plambda} is fully coupled then it satisfies the hypotheses of Theorem~\ref{estimatesabove}, just take ${n^\prime}=1$. The other extreme is a diagonal matrix such that $c_{kk} \gneqq 0$ for any $k$, by choosing ${n^\prime}=n$, which corresponds to $n$ independent scalar equations with positive zero-order term coefficients, and Theorem \ref{estimatesabove} reduces to \cite[Theorem 2.1]{jfa19}.
\end{remark}

Now we prove Theorem \ref{estimatesabove}.
As a first step, we assume that \eqref{Plambda} is fully coupled. Again, in order to avoid cumbersome notation, we assume $n=2$, and we point out how to adapt the proof for $n \ge 2$ when necessary.

By Theorem \ref{estimatesbelow}, solutions are bounded from below by a uniform constant $C_1$.
Fix $\delta>0$.
Notice that $v_i:=u_i+C_1+\delta$, is a nonnegative viscosity solution of

\begin{align*}
\mathcal{M}^-(D^2v_i)-b\abs{Dv_i} \le -\lambda c_{i1}(x) v_1 - \lambda c_{i2}(x) v_2 - \mu_1 \abs{Dv_i}^2 + \tilde h_i(x)\;\;\textrm{ in } \Omega,
\end{align*}
where $\tilde h_i=h_i^- + \Lambda_2 \{c_{i1}+c_{i2}\} (C_1+\delta)$. Thus, by Lemma \ref{lemma2.3arma}, the functions
\[ w_i:=\frac{1}{m_1} \{ e^{m_1 v_i}- 1 \}, \;\;\; i=1,2, \]
where $m_1=\frac{\mu_1}{\Lambda_p}$, form a nonnegative viscosity supersolution of
\[
\mathcal{L}_{i}^-[w_i] \le f_i(x)  \text{ in } \Omega , \quad i=1,2
\]
with $\mathcal{L}_i^-[w]=\mathcal{M}^- (D^2w) - b\abs{D w} - m_1 \tilde h_i(x) w$ and $f_i(x)=-\frac{\lambda}{m_1}c_{i1}(x)(1+m_1 w_i) \ln (1+m_1 w_1) -\frac{\lambda}{m_1}c_{i2}(x)(1+m_1 w_i) \ln (1+m_1 w_2) +\tilde h_i(x)$.
Let
\[ I_1 = \inf_{\Omega} \frac{w_1}{d} , \quad I_2 = \inf_{\Omega} \frac{w_2}{d}. \]
Since $f_i^+ \in L^p(\Omega)$ (see the proof of Theorem 5.1 in \cite{jfa19}), we can apply Theorem \ref{QSMP} to obtain suitable constants such that
\begin{align*}
I_1 & \ge c_0 \left( \int_{\Omega} (f_1^-)^{\varepsilon} \right)^{1/\varepsilon} - C_0 \norm{f_1^+}_{L^p}= c_0 \Big (\int_{\Omega} \Big \{ \Big ( \frac{\lambda}{m_1} c_{11}(x) (1 + m_1 w_1) \ln  (1 + m_1 w_1)\\
& \begin{multlined}
	 \;\;\; +\frac{\lambda}{m_1} c_{12}(x) (1 + m_1 w_1) \ln  (1 + m_1 w_2) - \tilde h_1(x) \Big )^+ \Big \}^{\varepsilon} \Big)^{1/\varepsilon} - C
\end{multlined}\\
& \begin{multlined}
	\ge c_0 \inf_{\Omega} \frac{w_1}{d} \Big ( \int_{\Omega} \Big ( \Big ( \lambda c_{11}(x) \frac{1+m_1w_1}{m_1w_1} d \ln (1 + m_1w_1) \\
	+ \lambda c_{12}(x) \frac{1+m_1w_1}{m_1w_1} d \ln (1 + m_1w_2) - \tilde h_1(x) \frac{1+m_1w_1}{w_1}d \Big)^+ \Big)^{\varepsilon} \Big)^{1/\varepsilon} - C
\end{multlined}\\
& \begin{multlined}
	\ge c_0 I_1 \Big( \int_{\Omega} \Big \{ \Big( \lambda c_{11}(x) d  \ln (1+I_1 m_1 d) + \lambda c_{12}(x) d \ln (1+I_2 m_1 d) \\
		 - m_1 \tilde h_1(x) d \Big)^+ \Big \}^{\varepsilon} \Big)^{1/\varepsilon} - C
    \end{multlined}
\end{align*}
Therefore
\begin{multline}\label{estimatesI1}
 I_1 \Big \{ c_0  \Big ( \int_{\Omega} d^{\varepsilon}  \Big(  \Big( \lambda c_{11}(x) \ln(1+I_1m_1 d) +  \lambda c_{12}(x) \ln(1+I_2m_1 d)\\
 - m_1 \tilde h_1(x)  \Big)^+  \Big)^{\varepsilon}  \Big)^{1/\varepsilon} - 1  \Big \} \le C
 \end{multline}
and analogously
\begin{multline}\label{estimatesI2}
 I_2 \Big \{ c_0  \Big ( \int_{\Omega} d^{\varepsilon}  \Big(  \Big( \lambda c_{21}(x) \ln(1+I_1m_1 d) +  \lambda c_{22}(x) \ln(1+I_2m_1 d)\\
 - m_1 \tilde h_2(x)  \Big)^+  \Big)^{\varepsilon}  \Big)^{1/\varepsilon} - 1  \Big \} \le C.
\end{multline}
We prove in the sequel that both $I_1$ and $I_2$ are bounded from above.
By full coupling, $c_{12} \gneqq 0$ and $c_{21} \gneqq 0$. Since $I_1 \ge \frac{1}{m_1}\{e^{m_1\delta} - 1\} >0$, \eqref{estimatesI1} implies
\[ \int_{\Omega} d^{\varepsilon} \Big ( \Big (\lambda c_{11}(x) \ln (1+I_1m_1 d) + \lambda c_{12}(x) \ln (1+I_2m_1 d) - m_1 \tilde h_1(x) \Big)^+ \Big)^{\varepsilon} \le C. \]
In particular,
\[ \int_{\Omega} d^{\varepsilon} \Big ( \Big ( \lambda c_{12}(x) \ln (1+I_2m_1 d) - m_1 \tilde h_1(x) \Big)^+ \Big)^{\varepsilon} \le C, \]
and analogously by \eqref{estimatesI2},
\[ \int_{\Omega} d^{\varepsilon} \Big ( \Big ( \lambda c_{21}(x) \ln (1+I_1m_1 d) - m_1 \tilde h_2(x) \Big)^+ \Big)^{\varepsilon} \le C, \]
whence $I_1, I_2 \le C$ as in \cite[p.1829]{jfa19}.
In the general case $n \ge 2$, we just observe that by full coupling for any fixed $k=1, \dots, n$ there exists an index $j =1, \dots, n $, $j \ne k$, such that $c_{jk} \gneqq 0$. Thus, exploiting the $j$-th equation we get
\[ \int_{\Omega} d^{\varepsilon} \Big ( \Big ( \lambda c_{jk}(x) \ln (1+I_k m_1 d) - m_1 \tilde h_j(x) \Big)^+ \Big)^{\varepsilon} \le C, \]
and $I_k$ turns out to be bounded, for all $k=1,\cdots, n$.

Let us now turn back to the model case $n=2$.
By Theorem \ref{WHI} and $I_1 \le C$ we find constants such that
\begin{equation}\label{BWH_1}
\left( \int_{\Omega} (w_1)^{\varepsilon_1} \right)^{1/\varepsilon_1} \le \textrm{diam}\Omega \left( \int_{\Omega} \left( \frac{w_1}{d} \right)^{\varepsilon_1} \right)^{1/\varepsilon_1} \le C_0\, \{ I_1 + \norm{f_1^+}_p \} \le C.
\end{equation}
Similarly, using $I_2 \le C$ we obtain
\begin{equation}\label{BWH_2}
\left( \int_{\Omega} (w_2)^{\varepsilon_2} \right)^{1/\varepsilon_2} \le C.
\end{equation}

Set
\[ z_i = \frac{1}{m_2} \{ e^{m_2 u_i} - 1\},  \; i=1,2,\]
where $m_2=\frac{\mu_2}{\lambda_p}$. Since
\begin{align*}
\mathcal{M}^+ & (D^2 z_i) + b \abs{D z_i} +  \frac{\lambda}{m_2 z_i} c_{i1}(x)(1+m_2 z_i) \ln (1+m_2 z_1)  z_i  \\
&+ \frac{\lambda}{m_2 z_i} c_{i2}(x)(1+m_2 z_i) \ln (1+m_2 z_2)  z_i\ge -h_i^+(x) (1+m_2z_i) \;\;  \textrm{ in } \Omega,
\end{align*}
with $z_i=0$ on $ \partial \Omega$, then $z:= \max \{ z_1, z_2 \}$ satisfies the following problem
\begin{equation}\label{BLMPsystem}
\left\{
\begin{array}{rclcc}
\mathcal{M}^+ (D^2 z) + b \abs{D z} + \nu(x) z &\ge & -h^+(x) & \mbox{in}  & \Omega \\
z&=&0 & \mbox{on}& \partial \Omega,
\end{array}
\right.
\end{equation}
where
\[ \nu(x)= \frac{2\lambda}{m_2 z} c(x)(1+m_2 z) \ln (1+m_2 z) + m_2 h^+ (x),  \]
$c(x)=\max_{i,j} \{ c_{ij} (x) \} \chi_{\{z >0 \}}$, and $h^+=\max \{ h_1^+,h_2^+\}$.
Notice that
\[ z_i=\frac{1}{m_2} \{ (1+m_1 w_i)^{\frac{m_2}{m_1}} e^{-m_2(C_1+\delta)} - 1 \}. \]
Moreover, for any $s$ there exists $C_s$ such that
\[ \abs{\nu} \le C_s c(x) (1+ \abs{z}^s).  \]

Set $\varepsilon = \min \{ \varepsilon_1, \varepsilon_2 \}$. If we take $s=\varepsilon \frac{m_1}{m_2} \frac{p-N}{p(p+N)}$ and $p_1=\frac{p+N}{2}$, then, by H{\"o}lder, given $\frac{1}{p_1}= \frac{1}{p} + \frac{1}{p_2}$, we obtain
\begin{multline*}
\norm{c \abs{z}^s}_{p_1} \le \norm{c}_{p} \norm{ \abs{z}^{s}}_{p_2}
\le  \norm{c}_{p} \norm{ \abs{z_1}^{s}}_{p_2} +  \norm{c}_{p} \norm{ \abs{z_2}^{s}}_{p_2} \\
= \norm{c}_p \left( \int_{\Omega} \abs{z_1}^{\varepsilon \frac{m_1}{m_2}} \right)^{\frac{p-N}{p(p+N)}} + \norm{c}_p \left( \int_{\Omega} \abs{z_2}^{\varepsilon \frac{m_1}{m_2}} \right)^{\frac{p-N}{p(p+N)}}.
\end{multline*}
Recall that both $I_1$ and $I_2$ are bounded from above, and both \eqref{BWH_1} and \eqref{BWH_2} are satisfied. Then
\[ \norm{\nu}_{p_1} \le C \norm{c}_p + \norm{c \abs{z}^s}_{p_1} +m_2 \|h^+\|_{p} \le C. \]
Thus we have, by Theorem \ref{LMP} applied to \eqref{BLMPsystem},
\[ \sup_{\Omega} z^+ \le C \left \{ \left( \int_{{\Omega}} \abs{z_1}^{\varepsilon_1 \frac{m_1}{m_2}} \right)^{\frac{m_2}{\varepsilon_1 m_1}} + \left( \int_{{\Omega}} \abs{z_2}^{\varepsilon_2 \frac{m_1}{m_2}} \right)^{\frac{m_2}{\varepsilon_2 m_1}} + \norm{h^+}_p  \right \}  \le C. \]
Hence, $u_1^+$ and $u_2^+$ are uniformly bounded in $\Omega$.
This proves that Theorem \ref{estimatesabove} holds for any fully coupled system.

Next, take a system whose matrix is in the block triangular form \eqref{blocktriangularform}, with no $1 \times 1$ zero diagonal blocks. Consider the first $t_1$ equations. They are either a fully coupled system (if $t_1>1$), or a scalar equation with a nonvanishing zero order coefficient (if $t_1=~1$). Hence, by the above and \cite[Theorem 5.1]{jfa19} we conclude that $u_1, \dots, u_{t_1}$ are uniformly bounded. We can now consider these $t_1$ functions as being part of the $h$-terms in the next $t_2$ equations, which in turn become a fully coupled system (if $t_2>1$) or a scalar equation with a positive zero order coefficient (if $t_2=1$). The reasoning iterates, and one proves uniform bounds for $u_1, \dots, u_n$.

\section{Multiplicity results for systems}\label{section multiplicity}

In this section we extend to systems the arguments  in \cite{jfa19}.
Our goal is to point out the main differences that come from the nature of the system, and refer to \cite{jfa19} for further details and references.

Throughout this section, $\langle M(x)D u, D u \rangle$ will be the shorthand notation for the vector with entries $\langle M_i(x)D u_i, D u_i \rangle$, $i=1,\cdots , n$.
We set $E:=C^1(\overline{\Omega})^n$, the Banach space with the norm $\|u\|_E=\max_{1\leq i \leq n}\|u_i\|_{C^1 (\overline{\Omega})}$, where $u=(u_1,\cdots, u_n)$.

\smallskip

We start with some auxiliary results.

\begin{defi} \label{def2.1}
An $L^p$-viscosity subsolution $\xi\in E$ $\mathrm{(}$respectively, supersolution $\eta)$ of \eqref{Plambda} is said to be strict if every $L^p$-viscosity supersolution $($subsolution$)$ $u\in E$ of \eqref{Plambda} such that $\xi\leq u$ $(u \leq\eta)$ in $\Omega$, also satisfies $\xi\ll u$ $(u\ll \eta)$ in $\Omega$.
\end{defi}

Under hypothesis \eqref{ExistUnic M bem definido}, we define the operator $\mathcal{T}_\lambda :\, E\rightarrow E$ that takes $u=(u_1,\cdots,u_n)$ into $\mathcal{T}_\lambda u=U=(U_1, \cdots,U_n)$\,, the unique $L^p$-viscosity solution of the problem
\begin{align}\label{T_u} \tag{$\mathcal{T}_\lambda^u$}
-F[U]= \lambda \calc(x)u+\langle M(x)D u, D u \rangle +h(x) \mbox{ in }  \Omega, \;\;\;
U = 0 \mbox{ on }  \partial\Omega,
\end{align}
for any $\lambda\in \real$, where $h=(h_1,\cdots , h_n)$.

\begin{teo}\label{th2.1}
Suppose \eqref{SC},  and \eqref{ExistUnic M bem definido}.
Let $\xi =\max_{1\leq i \leq \kappa} \,\xi_i \,,\, \eta = \min_{1\leq j \leq \iota} \,\eta_j \,$, where $\xi_i \, , \,\eta_j \in W^{2,p} (\Omega)^n$ are strong sub and supersolutions of \eqref{Plambda} respectively, with $\xi \leq \eta$ in $\Omega$.
Then \eqref{Plambda} has an $L^p$-viscosity solution satisfying $\xi \leq u \leq \eta$ in $\Omega$.
Furthermore,

\begin{enumerate}[(i)]
\item If $\xi$ and $\eta$ are strict in the sense of definition \ref{def2.1}, then for large $R>0$ we have
$
deg(I-\mathcal{T}_\lambda, \mathcal{S},0)=1
$
where $\mathcal{S}=\mathcal{O}\cap \mathcal{B}_R$, for
$
\mathcal{O}= \{u\in C_0^1(\overline{\Omega}); \; \xi\ll u \ll \eta \;in \; \Omega\}.
$
\item If \eqref{Hstrong} holds and $\lambda\geq 0$, there exists a minimal and a maximal solution, $\underline{u}$ and $\overline{u}$,  of \eqref{Plambda} in the sense that every $($strong$)$ solution $u$ of \eqref{Plambda} in the order interval $[\xi , \eta]$ $($i.e. such that $\xi (x)\leq u(x)\leq \eta (x)$ for all $ x\in\Omega)$ satisfies
$\xi\leq\underline{u} \leq u \leq \overline{u} \leq \eta$ in $\Omega$.
\end{enumerate}

Moreover, the conclusion is true if we replace $\calc(x)$ by $\calc(x,u)$ defined by $(\calc(x,u))_{ij} u_j =c_{ij}(x)R_a (u_j)$ for $i, j=1, \dots, n$, where $R_a$ is defined as $R_a(u_j)=u_j$ for $u\geq a$, $R_a (u_j)=a$ for $u_j<a$.
\end{teo}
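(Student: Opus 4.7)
The plan is to extend the scalar sub- and supersolution argument of \cite[Thm.~6.2]{jfa19} to the system \eqref{Plambda}. The argument splits into three parts: (a) existence in $[\xi,\eta]$ via a truncated problem solved by Schauder's theorem; (b) the degree formula in (i), from strictness plus a homotopy to the modified operator; (c) the minimal and maximal solutions in (ii), constructed by monotone iteration exploiting cooperativity $c_{ij}\ge 0$.

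For existence I would define a componentwise truncation $Tu$ using $\xi^{(k)},\eta^{(k)}$, which are Lipschitz as pointwise max/min of $C^{1,\alpha}(\overline\Omega)$ functions (since $p>N$). Consider then the modified operator $\widetilde{\mathcal T}_\lambda u=U$ solving
\[
-F[U]=\lambda\,\mathcal C(x)(Tu)+\langle M(x)D(Tu),D(Tu)\rangle+h(x) \text{ in } \Omega,\quad U=0 \text{ on } \partial\Omega.
\]
The right-hand side is uniformly bounded in $L^\infty$ independently of $u\in E$, so by \eqref{ExistUnic M bem definido} and the $C^{1,\alpha}$ estimates from \eqref{SC}, $\widetilde{\mathcal T}_\lambda$ is compact and maps $E$ into a ball $B_R$. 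Schauder's theorem produces $U=\widetilde{\mathcal T}_\lambda U$. The core step is to show $\xi\le U\le \eta$. For each $j$, set $w_i=U_i-(\eta_j)_i$, apply \eqref{SC} and cooperativity, and linearize the quadratic gradient term by
\[
\langle M_i DU_i,DU_i\rangle-\langle M_i D(\eta_j)_i,D(\eta_j)_i\rangle=\langle M_i(DU_i+D(\eta_j)_i),Dw_i\rangle,
\]
where the coefficient is in $L^\infty$ by the global $C^{1,\alpha}$ estimates. On the positivity set of $w_i$ this yields a linear Pucci-type inequality with vanishing boundary trace and cooperative coupling, whence $w\le 0$ by the maximum principle. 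Thus $U\le \eta_j$ for each $j$, so $U\le \eta$; the bound $U\ge \xi$ is symmetric. Consequently $Tu=u$ and $U$ is a solution of \eqref{Plambda}.

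For (i), strictness prevents fixed points of $\mathcal T_\lambda$ on $\partial \mathcal O$, and the same comparison keeps the homotopy $H_t=t\mathcal T_\lambda+(1-t)\widetilde{\mathcal T}_\lambda$ fixed-point-free on $\partial(\mathcal O\cap B_R)$ for $R$ large. Therefore
\[
\deg(I-\mathcal T_\lambda,\mathcal O\cap B_R,0)=\deg(I-\widetilde{\mathcal T}_\lambda,\mathcal O\cap B_R,0)=\deg(I-\widetilde{\mathcal T}_\lambda,B_R,0)=1,
\]
the last equality since $\widetilde{\mathcal T}_\lambda(B_R)\subset B_R$. For (ii), under \eqref{Hstrong} all solutions are strong; iterating $u^{k+1}=\mathcal T_\lambda u^k$ from $u^0=\eta$, cooperativity together with $\lambda\ge 0$ and the same comparison give $\xi\le u^{k+1}\le u^k\le \eta$, and $C^{1,\alpha}$ compactness with the stability result in Proposition \ref{Lpquad} produce convergence in $E$ to the maximal solution $\overline u$; $\underline u$ is symmetric. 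The variant with $\mathcal C(x,u)$ requires no new idea, since $R_a$ is Lipschitz, preserves cooperativity, and keeps $\mathcal C(\cdot,u)u$ bounded in $L^p$.

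The main obstacle is the comparison step: since $\langle M Du,Du\rangle$ is not Lipschitz in $Du$, a naive subtraction does not give a linear Pucci inequality. I would handle this either by the exponential change of Lemma \ref{lemma2.3arma}, which absorbs the quadratic gradient into a bounded zero-order term so \eqref{SC} applies to $e^{mU_i}-e^{m(\eta_j)_i}$, or by exploiting $DU,D\eta_j\in L^\infty$ (from global $C^{1,\alpha}$) to obtain an $L^\infty$ first-order coefficient as above. Once this mechanism is in place, the cooperative structure of $\mathcal C$ and the componentwise nature of $T$ let the scalar scheme of \cite{jfa19} transfer with only notational bookkeeping.
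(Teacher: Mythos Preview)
Your scheme for the existence part and the degree computation in (i) is essentially the one the paper adopts (componentwise truncation as in \cite[p.~1820]{jfa19}, compactness from $C^{1,\alpha}$ estimates, then the same excision argument). There is, however, a genuine gap in your treatment of part (ii).

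The iteration $u^{k+1}=\mathcal T_\lambda u^k$ starting from $u^0=\eta$ is \emph{not} monotone, because $\mathcal T_\lambda$ is not order-preserving. Cooperativity and $\lambda\ge0$ give $\lambda\mathcal C u\le\lambda\mathcal C v$ whenever $u\le v$, but there is no reason whatsoever for $\langle M_iDu_i,Du_i\rangle\le\langle M_iDv_i,Dv_i\rangle$. Concretely, to pass from $u^k\le u^{k-1}$ to $u^{k+1}\le u^k$ you would need the sign of
\[
\langle M\,Du^k,Du^k\rangle-\langle M\,Du^{k-1},Du^{k-1}\rangle,
\]
which is uncontrolled by $u^k\le u^{k-1}$; your appeal to ``the same comparison'' does not help, since that comparison also relied on linearizing the gradient term against $Dw$, and here the two gradient terms are taken at \emph{different} functions. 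The paper circumvents this by a different mechanism: if $u,v$ are two strong solutions of \eqref{Plambda} with $\lambda\ge0$, then $\tilde\eta:=\min\{u,v\}$ is again an $L^p$-viscosity supersolution of \eqref{Plambda} (each of $u_i,v_i$ satisfies $-F_i[w]-\langle M_iDw,Dw\rangle\ge\lambda\sum_kc_{ik}\tilde\eta_k+h_i$, hence so does their minimum). With this in hand, the existence result applied on $[\xi,\tilde\eta]$ and the standard argument of \cite[Claim~4.5]{jfa19} produce the minimal solution; the maximal one is analogous. This ``min of solutions is a supersolution'' device is what replaces the monotonicity of $\mathcal T_\lambda$ that your scheme tacitly assumes.

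A smaller inconsistency in part (a): in your modified equation the gradient term is $\langle M_iD(TU)_i,D(TU)_i\rangle$, yet your subtraction uses $\langle M_iDU_i,DU_i\rangle$. On the set $\{U_i>(\eta_j)_i\}\subset\{U_i>\eta_i\}$ one has $(TU)_i=\eta_i$, so the gradient term equals $\langle M_iD\eta_i,D\eta_i\rangle$ and your identity $\langle M_i(DU_i+D(\eta_j)_i),Dw_i\rangle$ does not arise. The truncation in \cite[p.~1820]{jfa19}, which the paper carries over componentwise, is designed so that the maximum principle applies equation by equation; you should follow that construction rather than the one you wrote.
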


\begin{proof}
Analogously to \cite[Claim 4.1]{jfa19}, we see that $\mathcal{T}_\lambda$ is completely continuous in compact intervals of $\lambda$, by using $C^{1,\alpha}$ regularity estimates in each equation.

Fix some $\lambda\in[ \Lambda_1, \Lambda_2]$ and consider $R\geq \max \{C,\|\xi\|_{E},\|\eta\|_{E} \}+1$, where $C$ is such that $\|u_i\|_{C^{1,\alpha}(\overline{\Omega})}\leq C$, $i=1, \cdots , n$, for every solution $u$ of \eqref{Plambda} which is in the order interval $[\xi,\eta]$, and for all $\lambda\in [\Lambda_1,\Lambda_2]$.
The existence of a solution in $[\xi,\eta]$ follows by constructing a modified problem $(\widetilde{P}_\lambda)$, which corresponds to the truncation made in \cite[p.1820]{jfa19} componentwise.
Then:

(a) solutions of $(\tilde{P}_\lambda)$ are fixed points of a truncated operator $\widetilde{\mathcal{T}}_\lambda$;

(b) the problems $(\tilde{P}_\lambda)$ and $({P}_\lambda)$ coincide in the order interval $[\xi,\eta]$;

(c) $\|\widetilde{\mathcal{T}}_\lambda u\|_{E} <R_0$, for all $ u \in E$, for some $R_0>R$, and $\mathrm{deg} (I-\widetilde{\mathcal{T}}_{\lambda} ,\mathcal{B}_{R_0}\,, 0)=1$.

\smallskip

Indeed, (b) follows by applying the maximum principle for each $i$.
Moreover, if $\xi , \eta$ are strict, then the degree computation in $\mathcal{S}$ is exactly the same as in \cite[p.1823]{jfa19}.

For the existence of extremal solutions under \eqref{Hstrong} we just need to note that, if $u,v$ are solutions of \eqref{Plambda},
then $\widetilde{\eta}:=\min  \{u,v\}$ is an $L^p$-viscosity supersolution of \eqref{Plambda}.
Indeed, if $\lambda\geq 0$, then $u_i$ and $v_i$ satisfy the equation $-F_i\,[w]\geq \lambda c_{i1}\widetilde{\eta_1}+\cdots +\lambda c_{in} \widetilde{\eta}_n +\langle M_i(x)Du_i(x),Du_i(x)\rangle +h_i$ in the $L^p$-viscosity sense, and so does $\widetilde{\eta}_i=\min \{u_i,v_i\}$. Once we know this, the proof of Theorem \ref{th2.1}(ii) follows as in \cite[Claim 4.5]{jfa19}.
\end{proof}

Now we work with an auxiliary problem \eqref{Plambda,k} which has no solutions for large $k$, and such that $(P_{\lambda,0})$ reduces to \eqref{Plambda}.
Fix $\Lambda_2>0$.
Recall that constants are understood as vector constants when we are dealing with the system, as in Definition \ref{def1.2}.
Then, Proposition \ref{estimatesbelow} gives us an a priori lower uniform bound $C_0$, depending on $\Lambda_2$, such that
\begin{center}
$u\geq -C_0$ for every $L^p$-viscosity  supersolution of \eqref{Plambda}, for all $\lambda\in [0,\Lambda_2]$.
\end{center}

Consider, thus, the  system
\begin{align} \tag{$P_{\lambda ,k}$} \label{Plambda,k}
\left\{
\begin{array}{rclcc}
-F[u]&=&\lambda \mathcal{C}(x)u+h(x)+\langle M(x)D u,D u\rangle +k\,\widetilde{h}(x) &\mbox{in} & \Omega \\
u &=& 0 &\mbox{on} & \partial\Omega
\end{array}
\right.
\end{align}
for $k\geq 0$, $\lambda\in [0,\Lambda_2]$. Also, if
$h^-=(h_1^-,\cdots , h^-_n)$, then $\widetilde{h}=(\widetilde{h}_1, \cdots, \widetilde{h}_n)$ is such that
\begin{align} \label{ctilde e A for k=1}
\widetilde{h}(x)=\widetilde{h}_{\Lambda_2} (x):=  h^-(x)+(A+\Lambda_2\, C_0)\, \widetilde{c}(x);\quad   \widetilde{c}=\max_{1\leq i\leq n} \sum_{j=1}^{n} c_{ij} \in L^\infty_+ (\Omega),
\end{align}
with $A:={\lambda_1}/{m}\,$, $m={\mu_1}/{\Lambda_P}\,$. Here, $\lambda_1=\lambda_1^+\left( \mathcal{L}^-(\widetilde{c}),\Omega \right)>0$ is the first  eigenvalue with weight $\widetilde{c}$ associated to the positive scalar eigenfunction $\varphi_1 \in W^{2,p}(\Omega)$ given by Proposition~\ref{exist eig for F-c geq 0}, namely
\begin{align} \label{eq exist eigen L-c}
(\mathcal{L}^-+\lambda_1 \widetilde{c}\,)\, [\varphi_1]=0 \;\textrm{ and }\;\varphi_1>0\; \mbox{ in }  \Omega, \quad
\varphi_1 =0 \mbox{ on } \partial\Omega.
\end{align}

\smallskip

Note that every $L^p$-viscosity solution of $(P_{\lambda,k})$ is also supersolution of \eqref{Plambda}, since $k \widetilde{h}\geq 0$, and so satisfies $u\geq -C_0$.
From this and \eqref{ctilde e A for k=1} we have, for all $ k\geq 1$,
\begin{align} \label{ctilde e A for all k natural}
\lambda \mathcal{C}(x)u+ h(x)+k\,\widetilde{h}(x)\geq -\Lambda_2C_0\,\widetilde{c}(x)-h^-(x)+\widetilde{h}(x)= A \widetilde{c}(x) \gneqq 0 \;\textrm{ a.e. in } \Omega .
\end{align}

\begin{lem} \label{lema Plambda,k has no solutions}
For each fixed $\Lambda_2>0$, $(P_{\lambda ,k})$ has no solutions for all $k\geq 1$ and $\lambda\in [0,\Lambda_2]$.
\end{lem}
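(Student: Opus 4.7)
My plan is to argue by contradiction and reach a conflict with the Berestycki--Nirenberg--Varadhan characterization of the principal eigenvalue $\lambda_1=\lambda_1^+(\mathcal{L}^-(\widetilde{c}),\Omega)$. Suppose $u=(u_1,\dots,u_n)$ is an $L^p$-viscosity solution of \eqref{Plambda,k} for some $\lambda\in[0,\Lambda_2]$ and $k\geq 1$. First I would invoke Theorem \ref{estimatesbelow} to obtain the uniform lower bound $u_j\geq -C_0$ in $\Omega$. Combined with $c_{ij}\geq 0$, $\sum_j c_{ij}\leq \widetilde{c}$, the definition \eqref{ctilde e A for k=1} of $\widetilde{h}$, and the identity $h_i+kh_i^-=h_i^++(k-1)h_i^-\geq 0$ valid for $k\geq 1$, this yields the sharpened version of \eqref{ctilde e A for all k natural}
\[
\lambda(\mathcal{C}u)_i+h_i+k\widetilde{h}_i \;\geq\; kA\,\widetilde{c}\qquad\text{in }\Omega,\ \text{for every }k\geq 1.
\]
Using then $\langle M_i Du_i,Du_i\rangle\geq \mu_1|Du_i|^2$ in the equation of \eqref{Plambda,k}, I would obtain the key scalar supersolution inequality $-F_i[u_i]\geq \mu_1|Du_i|^2+kA\widetilde{c}$ in $\Omega$, for every $i$.

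Fix one index $i$ and set $v_i:=u_i+C_0\geq 0$ (so $F_i[v_i]=F_i[u_i]$, since $F_i$ has no explicit $u$-dependence). Assuming without loss of generality $F_i(x,0,0)\equiv 0$ (satisfied by the model operators \eqref{models}, and otherwise absorbable into $h_i$), the structural condition \eqref{SC} transforms the above into
\[
\mathcal{L}^-[v_i]+\mu_1|Dv_i|^2 \;\leq\; -kA\widetilde{c}\qquad\text{in the $L^p$-viscosity sense}.
\]
In particular $\mathcal{L}^-[v_i]\leq 0$ in $\Omega$ with $v_i|_{\partial\Omega}=C_0>0$, so the strong maximum principle (Theorem \ref{WHI} with $g=f=0$) forces $v_i>0$ in $\Omega$. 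I now apply the exponential change $w_i:=(e^{mv_i}-1)/m$ with $m=\mu_1/\Lambda_P$, so that $w_i>0$ in $\Omega$, $1+mw_i=e^{mv_i}$, and $|Dv_i|(1+mw_i)=|Dw_i|$. Lemma \ref{lemma2.3arma}, together with $m\Lambda_P=\mu_1$, gives $\mathcal{M}^-(D^2v_i)+\mu_1|Dv_i|^2\geq \mathcal{M}^-(D^2w_i)/(1+mw_i)$; multiplying the displayed inequality by $(1+mw_i)\geq 1$ produces, still in the $L^p$-viscosity sense,
\[
\mathcal{L}^-[w_i] \;\leq\; -kA\widetilde{c}\,(1+mw_i) \;=\; -kA\widetilde{c}-k\lambda_1\widetilde{c}\,w_i,
\]
where in the last equality I have used $Am=\lambda_1$.

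Rewriting as $\mathcal{L}^-[w_i]+k\lambda_1\widetilde{c}\,w_i\leq -kA\widetilde{c}$ and using $k\geq 1$, $w_i\geq 0$, $\widetilde{c}\geq 0$, we get $(k-1)\lambda_1\widetilde{c}w_i\geq 0$, hence
\[
\mathcal{L}^-[w_i]+\lambda_1\widetilde{c}\,w_i \;\leq\; -kA\widetilde{c}-(k-1)\lambda_1\widetilde{c}\,w_i \;\leq\; -A\widetilde{c}\;\lneqq\; 0 \quad\text{in }\Omega.
\]
Since $w_i\in L^\infty(\Omega)$, I can pick $\delta:=A/(2\|w_i\|_\infty)>0$, which yields
\[
\mathcal{L}^-[w_i]+(\lambda_1+\delta)\widetilde{c}\,w_i \;\leq\; \widetilde{c}\,(\delta w_i-A) \;\leq\; -\tfrac{A}{2}\widetilde{c} \;\leq\; 0 \quad\text{in }\Omega.
\]
Thus $w_i>0$ would be a positive $L^p$-viscosity supersolution of $\mathcal{L}^-[w]+(\lambda_1+\delta)\widetilde{c}\,w\leq 0$ in $\Omega$, with $\lambda_1+\delta>\lambda_1$, contradicting the BNV-type characterization
\[
\lambda_1=\sup\{\mu\in\mathbb{R}\,:\,\exists\,w>0\text{ in }\Omega,\ \mathcal{L}^-[w]+\mu\widetilde{c}\,w\leq 0\text{ in }\Omega\},
\]
which holds for Pucci-type operators in the $L^p$-viscosity framework (see Proposition \ref{exist eig for F-c geq 0}). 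The main subtlety I anticipate is to rigorously carry the two-step reduction --- first \eqref{SC}, then the quadratic-gradient exponential change of Lemma \ref{lemma2.3arma} --- through the $L^p$-viscosity setting, and to cleanly invoke the BNV characterization in this setting; the constants $\mu_1,\Lambda_P,A,\lambda_1$ have been tuned precisely so that the coupling $k\lambda_1=kAm$ produces an obstruction exactly at the threshold $k=1$.
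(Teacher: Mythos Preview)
Your proof is correct and follows essentially the same strategy as the paper's: use the lower bound and the specific choice of $\widetilde{h}$ to obtain $\mathcal{L}^-[\cdot]+\mu_1|D\cdot|^2\leq -A\widetilde c$, then apply the exponential change of Lemma~\ref{lemma2.3arma} with $m=\mu_1/\Lambda_P$ (so that $mA=\lambda_1$) to reach a positive function satisfying $(\mathcal{L}^-+\lambda_1\widetilde c)[\cdot]\lneqq 0$, contradicting the definition of $\lambda_1$. The only cosmetic differences are that the paper works directly with $u$ (showing $u>0$ from the Dirichlet condition rather than translating by $C_0$) and derives the final contradiction via the simplicity result Proposition~\ref{th4.1 QB} (forcing the supersolution to equal $t\varphi_1$), whereas you use the $\delta$-perturbation and the BNV supremum characterization; both conclude the same way.
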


\begin{proof}
First observe that, from \eqref{ctilde e A for all k natural}, every $L^p$-viscosity solution of $(P_{\lambda,k})$ is positive in $\Omega$ for $\lambda\in [0,\Lambda_2]$.
Let us assume by contradiction that \eqref{Plambda,k} has a solution $u$. Then it is also a solution of
\begin{align*}
\mathcal{L}^-[u] \leq  -\mu_1 |D u|^2 -A\widetilde{c}(x) \;\;\textrm{ and }\;\; u>0\;\;\mbox{ in } \Omega,
\end{align*}
and from Lemma \ref{lemma2.3arma},
$-\mathcal{L}^-[v] \geq \lambda_1 \widetilde{c}(x) v +A\widetilde{c}(x)$ and $ v>0$ in $\Omega$, using $mA=\lambda_1\,$, where $mv_i= e^{mu_i} -1 $, for $m$ and $A$ from \eqref{ctilde e A for k=1}, $i=1,\cdots , n$.
Now, since each $v_i$ is a supersolution of $-\mathcal{L}^-[v_i] \geq \lambda_1 \widetilde{c}(x) v_i +A\widetilde{c}(x)$,
thus $\underline{v}:=\min_{1\leq i \leq n} v_i$ satisfies
\begin{align}\label{eq v L-(c) to absurd}
(\mathcal{L}^-+\lambda_1 \widetilde{c}\,) [\,\underline{v}\,]\lneqq 0  \;\;\textrm{ and }\;\; v>0\;\;\mbox{ in } \Omega .
\end{align}
Then \eqref{eq exist eigen L-c}, \eqref{eq v L-(c) to absurd}, and Proposition \ref{th4.1 QB} yield $\underline{v} =t\varphi_1$ for some $t>0$. But this contradicts the first line in \eqref{eq v L-(c) to absurd}, since $(\mathcal{L}^-+\lambda_1\widetilde{c}) [\,t\varphi_1]=t(\mathcal{L}^-+\lambda_1\widetilde{c}) [\varphi_1]=0$\, in $\Omega$.
\end{proof}

When we are assuming hypothesis \eqref{Hstrong} we just say solutions to mean strong solutions of $({P}_\lambda)$. However, it is worth mentioning that sub and supersolutions, in general, are not strong, since we are considering the problem in the $L^p$-viscosity sense. In order to avoid possible confusion, we make explicit the notion of sub/supersolution we are referring to.

The next result is  important in degree arguments, bearing in mind the set $\mathcal{S}$ in Theorem~\ref{th2.1}(i). This will play the role of the strong subsolution $\xi$ in that theorem.

\begin{lem}\label{lemma 4.2}
Suppose \eqref{SC},  and \eqref{Hstrong}. Then, for every $\lambda\geq0$, there exists a strong strict subsolution  $\xi_\lambda$ of \eqref{Plambda} which is strong minimal, in the sense that every strong supersolution $\eta$ of \eqref{Plambda} satisfies $\xi_\lambda\leq\eta$ in $\Omega$.
\end{lem}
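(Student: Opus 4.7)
The plan is to construct $\xi_\lambda$ explicitly as an affine function with sufficiently large slope, using Theorem \ref{estimatesbelow} to handle minimality for free.

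By Theorem \ref{estimatesbelow} (applied with any $\Lambda_2>\lambda$, and using Proposition \ref{Lpiffstrong.quad} to view strong supersolutions as $L^p$-viscosity ones), there exists a constant $C_0=C_0(\lambda)$ such that every strong supersolution $\eta$ of \eqref{Plambda} satisfies $\eta_i\geq -C_0$ in $\overline\Omega$ for each $i$. Consequently, if we manage to produce a strong subsolution $\xi_\lambda$ with $\xi_{\lambda,i}\leq-(C_0+1)$ throughout $\overline\Omega$, then the strict pointwise gap $\xi_{\lambda,i}\leq-(C_0+1)<-C_0\leq\eta_i$ in $\overline\Omega$, together with $\xi_{\lambda,i}<0\leq\eta_i$ on $\partial\Omega$, automatically gives $\xi_\lambda\ll\eta$ in the sense of Definition \ref{def1.2}. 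Both minimality and strictness thus come for free, without needing a strong maximum principle argument.

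After a translation, we may assume $\overline\Omega\subset\{x_1\leq 0\}$. For a constant $C>0$ to be chosen, set
\[
\xi_{\lambda,i}(x):=C\,x_1-(C_0+1),\qquad i=1,\dots,n,\ x\in\overline\Omega.
\]
Then $\xi_{\lambda,i}(x)\in[-C\,\mathrm{diam}(\Omega)-(C_0+1),\,-(C_0+1)]$ and $\xi_\lambda\in C^\infty(\overline\Omega)^n\subset W^{2,p}(\Omega)^n$. Using $D\xi_{\lambda,i}=Ce_1$, $D^2\xi_{\lambda,i}=0$, hypothesis \eqref{SC} (with $\vec q=0$, $Y=0$) yields $F_i(x,Ce_1,0)\geq F_i(x,0,0)-bC$, while \eqref{M} gives $\langle M_i(x)Ce_1,Ce_1\rangle\geq\mu_1C^2$. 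Thus, for a.e.\ $x\in\Omega$ and every $i$,
\[
-F_i[\xi_{\lambda,i}]-\langle M_iD\xi_{\lambda,i},D\xi_{\lambda,i}\rangle\leq\|F_i(\cdot,0,0)\|_\infty+bC-\mu_1C^2,
\]
and since all components coincide,
\[
\lambda\sum_{j=1}^n c_{ij}(x)\xi_{\lambda,j}(x)+h_i(x)\geq-\|h\|_\infty-\lambda\,n\,\|c_{ij}\|_\infty\bigl(C\,\mathrm{diam}(\Omega)+C_0+1\bigr).
\]
Therefore, the subsolution inequality in \eqref{Plambda} holds provided
\[
\mu_1C^2\geq\bigl(b+\lambda\,n\,\|c_{ij}\|_\infty\,\mathrm{diam}(\Omega)\bigr)\,C+\|F_i(\cdot,0,0)\|_\infty+\|h\|_\infty+\lambda\,n\,\|c_{ij}\|_\infty(C_0+1),
\]
a quadratic inequality in $C$ with positive leading coefficient $\mu_1$; it holds for all $C$ sufficiently large, depending only on the data, on $\mathrm{diam}(\Omega)$ and on $\lambda$. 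This produces the required strong strict minimal subsolution $\xi_\lambda$.

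No serious obstacle is foreseen. The key observation is that the natural-growth term $\langle M_iDu_i,Du_i\rangle$, which causes the main difficulties elsewhere in the paper, here works entirely in our favour: by the nondegeneracy \eqref{M} it grows like $\mu_1C^2$ for the affine choice above, dominating all the other quantities in the subsolution inequality, which are at most linear in $C$. The apparent circularity (the right-hand side depends on $\xi_{\lambda,j}$ through $\lambda c_{ij}\xi_{\lambda,j}$) is harmless precisely because $|\xi_{\lambda,j}|$ grows only linearly in $C$, against the quadratic $\mu_1C^2$ on the left.
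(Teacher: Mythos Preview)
Your argument is correct and considerably more elementary than the paper's. The paper proceeds differently: it first solves the extremal linear Dirichlet problem $\mathcal{L}^-[\xi_0]=\lambda K\mathcal{C}(x)+h^-(x)+1$ in $\Omega$ with $\xi_0=0$ on $\partial\Omega$, obtains $\xi_0\ll 0$ via ABP, SMP and Hopf, and shows $\eta\ge\xi_0$ for every supersolution $\eta$ by comparison; it then truncates the zero-order matrix to $\overline{\mathcal{C}}(x,u)$, invokes Theorem~\ref{th2.1}(ii) to extract a \emph{minimal strong solution} of the modified problem in $[\xi_0,\eta_0]$, and finally obtains strictness through a strong-maximum-principle argument imported from \cite{jfa19}. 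By contrast, you exploit the one ingredient the paper does not use at this stage, namely the lower bound $\mu_1|Du|^2$ from \eqref{M}: for your affine candidate the quadratic gradient term dominates everything else and forces the subsolution inequality for large slope, while the uniform gap $\xi_\lambda\le -(C_0+1)<-C_0\le\eta$ coming from Theorem~\ref{estimatesbelow} gives minimality and strictness simultaneously, with no SMP or Hopf needed. Your route in fact dispenses with \eqref{Hstrong} altogether. What the paper's route buys is a $\xi_\lambda$ that vanishes on $\partial\Omega$ and is itself a solution of an auxiliary problem; however, for the applications of Lemma~\ref{lemma 4.2} in this paper (e.g.\ the degree computation in Theorem~\ref{th2.1}(i), where one only needs $\xi\ll u$ for $u\in C_0^1(\overline{\Omega})$), your strictly negative boundary values work just as well.
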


\begin{proof}
Let $K>0$ from Proposition \ref{estimatesbelow} be such that every $L^p$-viscosity supersolution $\eta$ of
\begin{align}\tag{$Q_\lambda$}
-F[\eta] \geq \lambda \mathcal{C}(x)\eta+\langle M(x)D \eta,D \eta\rangle -h^-(x)-1  \mbox{ in }\Omega , \;\;\;
\eta \geq  0 \mbox{ on } \partial\Omega
\end{align}
satisfies $\eta\geq - K$ in $\Omega$.
Let $\xi_0$ be the strong solution of the problem
\begin{align}\label{eq alpha0}
\mathcal{L}^-[\xi_0]= \lambda K\mathcal{C}(x)+h^-(x)+1  \mbox{ in }& \Omega , \;\;\;
\xi_0=0 \mbox{ on } \partial\Omega,
\end{align}
given, for example, by \cite{BuscaSirakov}.
Then, as the right hand side of \eqref{eq alpha0} is positive, by ABP, SMP and Hopf, we have $\xi_0\ll 0$ in $\Omega$.
As in \cite[Claim 6.3]{jfa19}, we see that
\begin{align}\label{claim 1 lemma 4.2}
\textrm{every $L^p$-viscosity supersolution $\eta$ of \eqref{Plambda} satisfies $\eta\geq \xi_0$ in $\Omega$.}
\end{align}
Indeed, notice that $\eta$ is an $L^p$-viscosity supersolution of $(Q_\lambda)$ and so satisfies $\eta\geq -K$.
Second, by \eqref{SC} and $M\geq 0$, $\eta$ is also an $L^p$-viscosity supersolution of
$$ -\mathcal{L}^-[\eta]\geq \lambda \mathcal{C}(x)\eta+h(x)\geq -\lambda K \mathcal{C}(x)-h^-(x) -1\quad\mathrm{in}\;\;\Omega. $$
Then $v:=\eta-\xi_0$ is an $L^p$-viscosity solution of $ \mathcal{L}^-[v] \leq 0 $, since $\xi_0$ is strong.
Further, $v\geq 0$ on $\partial\Omega$, then $v\geq 0$ in $\Omega$ by ABP, which proves \eqref{claim 1 lemma 4.2}.
Moreover, setting
$$
(\overline{\mathcal{C}}\,(x,t))_{ij}=c_{ij}(x) \;\; \mathrm{if}\; \;t_j\geq -K; \quad (\overline{\mathcal{C}}\,(x,t))_{ij}=-{K}\,c_{ij}(x)/t_j \;\; \mathrm{if} \;\; t_j<-K,
$$
we have
$0\leq (\overline{\mathcal{C}}\,(x,t))_{ij} \leq c_{ij}(x)\,$ a.e. in $\Omega$ and
$(\overline{\mathcal{C}}\,(x,t))_{ij}t_j\geq -Kc_{ij}(x)$ for all $t_j\in\real$. Then,
$$
-F[\xi_0]\leq -\mathcal{L}^-[\xi_0]\leq \lambda \,\overline{\mathcal{C}}\,(x,\xi_0)\xi_0+\langle M(x)D\xi_0,D\xi_0\rangle -h^-(x)-1,
$$
and so $\xi_0$ is a strong subsolution of $(\overline{Q}_\lambda)$, where $(\overline{Q}_\lambda)$ is the problem \eqref{Plambda} with $\mathcal{C},h$ replaced by $\overline{\mathcal{C}}=\overline{\mathcal{C}}(x,u)$, $\overline{h}=-h^--1$.
In addition, $(\overline{\mathcal{C}}(x,u))_{ij} u_j =c_{ij}(x)R_{-K} (u_j)$ for $i, j=1, \dots, n$, with $R_{-K}$ as in Theorem \ref{th2.1}.

Let $\eta_0$ be some fixed strong supersolution of \eqref{Plambda} (if it does not exist, the proof is finished). Then, by \eqref{claim 1 lemma 4.2}, we have $\xi_0\leq\eta_0$ in $\Omega$. Also, in that proof we observed that $\eta_0\geq - K$, so $\overline{\mathcal{C}}\,(x,\eta_0)\equiv \mathcal{C}(x)$ a.e. $x\in\Omega$, which implies that $\eta_0$ is a strong supersolution of $(\overline{Q}_\lambda)$. By Theorem \ref{th2.1}(iii), we obtain an $L^p$-viscosity solution $w$ of this problem, with $\xi_0\leq w \leq \eta_0$ in $\Omega$, which is strong and can be chosen as the minimal solution in the order interval $[\xi_0,\eta_0]$, by \eqref{Hstrong} and $\lambda\geq 0$.
As in \cite[Claim 6.5]{jfa19}, since $\lambda\geq 0$, we easily see that $\eta$ is a strict supersolution of \eqref{Plambda}, with  $\eta\geq w$ in $\Omega$ -- we only need to pay attention in performing the same argument in the end of the proof of Theorem 6.3 in order to have the minimum of supersolutions as a supersolution.
\end{proof}

Now we turn to the proof of theorems \ref{th1.1,1.2,1.3}, \ref{th1.5} and \ref{th1.4}.

\subsection{Proof of Theorem \ref{th1.1,1.2,1.3}}

We start with the coercive case.
Of course $\xi=u_0-\|u_0\|_{\infty}$ and $\eta=u_0+\|u_0\|_{\infty}$ are strong sub and supersolutions of the problem \eqref{Plambda}, for each $\lambda<0$, with $\xi\leq u_0 \leq \eta$ in $\Omega$.
Indeed, it is just a question of using \eqref{SC} to obtain $F[\xi]\geq  F[u_0]\geq F[\eta]$, together with $\lambda c(x)\xi \geq 0\geq \lambda c(x)\eta$.
Then Theorem \ref{th2.1} provides a solution $u_\lambda\in [\xi,\eta]$, for all $\lambda<0$.

To show $\|u_\lambda -u_0\|_E \rightarrow 0$ as $\lambda\rightarrow 0^+$,
we take an arbitrary sequence $\lambda_k \rightarrow 0^+$, and obtain -- via stability, $C^{1,\alpha}$ regularity and compact inclusion -- the existence of a limit function $u$ such that $u_k\rightarrow u$ in $E$, which is an $L^p$-viscosity solution of $(P_0)$. From the uniqueness of the solution at $\lambda=0$, $u=u_0$.

For the existence of a continuum from $u_0$, we fix $\varepsilon>0$ and look at the pair $\xi =u_0-\varepsilon$ and $\eta=u_0+\varepsilon$, which are
strong sub and supersolutions for $(P_0)$.
Since $u_0$ is the unique $L^p$-viscosity solution of the problem $(P_0)$, $\xi$ and $\eta$ are strict. Then, Theorem~\ref{th2.1}(i) and the uniqueness of the solution $u_0$ give us $\mathrm{ind}(I-\mathcal{T}_0\,,u_0)=1$.
Thus, by the well known degree theory results (see \cite[Theorem~3.3]{BR} for instance) there exists a continuum, whose components are unbounded in both directions $\real^+ \times E$ and $\real^-\times E$. This proves item~\textit{1} of Theorem \ref{th1.1,1.2,1.3}.
Item \textit{2}, in turn, is just a consequence of the a priori bounds obtained for every interval $ [\Lambda_1,\Lambda_2]$ not including the origin, and a priori estimates from below for every interval $[0,\Lambda_2]$.
\smallskip

For the multiplicity results in item \textit{3}, we notice that

(a) There exists a $\lambda_0>0$ such that $\,\mathrm{deg}(I-\mathcal{T}_\lambda\,,\mathcal{S},\,0)=1\,$, for all $ \lambda\in (0,\lambda_0)$;

(b) \eqref{Plambda} has two solutions when $\lambda\in (0,\lambda_0/2]$; \\
are both easy consequences of the topological methods used in \cite[Claim 6.7, Claim 6.9]{jfa19}, once we have a priori bounds and $C^{1,\alpha}$ estimates. Also, we exploit Lemma \ref{lema Plambda,k has no solutions} in place of \cite[Lemma 6.1]{jfa19}. This permits us to define the quantity
\begin{align*}
\bar{\lambda}:=\sup \{ \,\mu\, ; \;\forall\, \lambda\in (0,\mu), \; (P_\lambda )\;\, \mathrm{has\;at\;least\;two\;solutions}\} \in [\lambda_0/2,+\infty]
\end{align*}
and then infer that the two solutions obtained, for $\lambda\in (0,\bar{\lambda})$, satisfy the properties stated in Theorem \ref{th1.1,1.2,1.3}.

To finish the proof, we must show the statements in items \textit{3} and \textit{4} concerning ordering and uniqueness. Notice that \eqref{Hstrong} automatically implies that $u_{\lambda,1}$ and $u_{\lambda,2}$ are strong, as well as every $L^p$-viscosity solution of \eqref{Plambda}.
The uniqueness result in item~\textit{3} follows as in \cite[p.1839]{jfa19} under a convexity assumption on $F$, by exploiting Lemma~\ref{lemma 4.2} above.
The ordering is proved in the next claim.

Recall that the matrix $\mathcal{C}(x)$ is in the form \eqref{blocktriangularform}.
\begin{claim} \label{step 4 proof th1.3}
$u_{\lambda,1} \ll u_{\lambda,2}$ in at least one block, for all $\lambda\in (0,\bar{\lambda})$.
\end{claim}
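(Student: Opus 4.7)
The strategy is to first upgrade the weak inequality $u_{\lambda,1} \le u_{\lambda,2}$ (componentwise) via a minimality argument, then locate the first block in which the two solutions differ, and finally apply a cooperative strong maximum principle within that block.

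\textbf{Step 1 (Componentwise ordering).} I would begin by showing that the componentwise minimum $v = (\min\{u_{\lambda,1,i},u_{\lambda,2,i}\})_{i=1}^{n}$ is an $L^p$-viscosity supersolution of \eqref{Plambda}. The cooperativity $c_{ij}\ge 0$ together with the standard fact that the pointwise minimum of two viscosity supersolutions of a scalar equation is again a supersolution handles the $i$-th equation whenever one substitutes $v_j$ in place of $u_{\lambda,k,j}$ (which only decreases the right-hand side); the quadratic gradient term is handled via Proposition \ref{Lpiffstrong.quad} by passing to strong solutions. By Lemma \ref{lemma 4.2}, $\xi_\lambda\le v$, so one may apply Theorem \ref{th2.1}(ii) in the order interval $[\xi_\lambda,v]$ under hypothesis \eqref{Hstrong} to obtain a minimal strong solution $u^\star\le v$. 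Since $u_{\lambda,1}$ was constructed as the minimal solution in a larger order interval $[\xi_\lambda,u_0+\varepsilon]$ containing $u^\star$, minimality forces $u_{\lambda,1}\le u^\star\le v$, that is, $u_{\lambda,1}\le u_{\lambda,2}$.

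\textbf{Step 2 (Locating the block).} Set $z:=u_{\lambda,2}-u_{\lambda,1}\ge 0$ and note $z\not\equiv 0$, since the two solutions are distinct. Exploiting the block triangular form \eqref{blocktriangularform}, the equations in the first block depend only on $u_1,\ldots,u_{t_1}$, so if $\widetilde z_1\equiv 0$, the first block equations for $u_{\lambda,1}$ and $u_{\lambda,2}$ coincide and we move on. Iterating, let $k_0$ be the first index with $\widetilde z_{k_0}\not\equiv 0$; then within block $k_0$ the difference $\widetilde z_{k_0}$ solves a homogeneous cooperative system, as contributions from earlier blocks vanish.

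\textbf{Step 3 (Strong maximum principle in block $k_0$).} Using \eqref{SC} to bound $-F_i[u_{\lambda,2,i}]+F_i[u_{\lambda,1,i}]$ from above by $-\mathcal{M}^-(D^2 z_i)+b|Dz_i|$, and the identity
\[
\langle M_i Du_{\lambda,2,i},Du_{\lambda,2,i}\rangle-\langle M_i Du_{\lambda,1,i},Du_{\lambda,1,i}\rangle=\langle M_i(Du_{\lambda,1,i}+Du_{\lambda,2,i}),Dz_i\rangle,
\]
together with the global $C^{1,\alpha}$ bounds on $u_{\lambda,1},u_{\lambda,2}$, each $z_i$ ($i\in S_{k_0}$) satisfies a scalar inequality of the form $\mathcal{L}^-[z_i]-g_i(x)z_i\le \lambda\sum_{j\in S_{k_0},\,j\ne i}c_{ij}(x)z_j$ with $g_i\in L^\infty$ and nonnegative right-hand side. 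Picking an index $i_0\in S_{k_0}$ with $z_{i_0}\not\equiv 0$, the SMP (Theorem \ref{WHI}) and Hopf give $z_{i_0}>0$ in $\Omega$ with $\partial_\nu z_{i_0}<0$ on $\partial\Omega$. The irreducibility of $\mathcal{C}_{k_0k_0}$ then propagates strict positivity to the remaining components of the block: for any $j\in S_{k_0}$ not yet known to be strictly positive, there exists $i\in S_{k_0}$ with $z_i\gg 0$ and $c_{ji}\gneqq 0$, forcing a strictly positive forcing in the inequality for $z_j$ and hence $z_j\gg 0$ by SMP. This yields $\widetilde u_{\lambda,1,k_0}\ll \widetilde u_{\lambda,2,k_0}$.

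\textbf{Main obstacle.} The delicate point is Step 1: verifying rigorously that the componentwise minimum of two $L^p$-viscosity solutions with quadratic gradient terms is still a supersolution in the $L^p$-viscosity sense, and matching up the two possibly different order intervals so that the minimality of $u_{\lambda,1}$ forces the desired comparison $u_{\lambda,1}\le u_{\lambda,2}$. The propagation of strict positivity across a fully coupled block in Step 3 also requires care, since each individual scalar SMP only gives strict sign where the right-hand side provides a positive forcing.
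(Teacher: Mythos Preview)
Your approach mirrors the paper's proof: take $u_{\lambda,1}$ minimal above the strict subsolution $\xi_\lambda$, use the min-of-supersolutions argument to force $u_{\lambda,1}\le u_{\lambda,2}$ componentwise, then propagate strict positivity of the difference through one irreducible block via SMP and Hopf. Two corrections are needed, however.

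In Step~1, the relevant minimality is \emph{global} among strong solutions $\ge\xi_\lambda$ (Lemma~\ref{lemma 4.2} makes $\xi_\lambda$ a lower bound for \emph{every} strong supersolution of \eqref{Plambda}), not minimality in some interval $[\xi_\lambda,u_0+\varepsilon]$; with this choice the paper's argument is a direct contradiction: if $(u_{\lambda,1})_i(x_0)>(u_{\lambda,2})_i(x_0)$ for some $i,x_0$, then Theorem~\ref{th2.1} applied on $[\xi_\lambda,\min\{u_{\lambda,1},u_{\lambda,2}\}]$ produces a solution strictly below $u_{\lambda,1}$.

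In Step~3 your displayed inequality has the wrong sign: subtracting the two equations and using \eqref{SC} gives
\[
\mathcal{M}^-(D^2 z_i)-\tilde b\,|Dz_i|\;\le\; -\lambda\sum_{j} c_{ij}(x)\,z_j\;\le\;0,
\]
i.e.\ a \emph{nonpositive} right-hand side, which is precisely what SMP requires; as you wrote it (nonnegative right-hand side) SMP would not apply. The propagation step then reads: if $z_i>0$ and $c_{ki}\gneqq 0$, the $k$-th inequality has a strictly negative forcing, so $z_k\not\equiv 0$, hence $z_k>0$ by SMP. Finally, your Step~2 restriction to the \emph{first} such block is unnecessary: since all cross-block terms $c_{ij}z_j$ are nonnegative and enter with a minus sign, any block containing a nonzero component of $z$ works.
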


\begin{proof}
Fix $\lambda\in (0,\bar{\lambda})$ and consider the strict strong subsolution $\xi=\xi_\lambda$ given by Lemma~\ref{lemma 4.2}.
Since in particular $\xi\leq u$ for every (strong) solution of \eqref{Plambda}, we can choose $u_{\lambda,1}$ as the minimal strong solution such that $u_{\lambda,1}\geq\xi$ in $\Omega$. We first note that this choice yields
\begin{align} \label{ulambda,1 leq ulambda,2 th1.3}
(u_{\lambda,1})_i\leq  (u_{\lambda,2})_i\;\;\;\mathrm{in}\;\;\Omega \quad \textrm{ for all $i=1,\cdots , n$}.
\end{align}
Otherwise there exists $x_0\in\Omega$ and one index $i$ such that $(u_{\lambda,1})_i(x_0)>(u_{\lambda,2})_i(x_0)$. Consider $u_\lambda:=\min\{ u_{\lambda,1}, u_{\lambda,2} \}\geq \xi$ in $\Omega$. Then Theorem \ref{th2.1} gives us a solution $u$ of \eqref{Plambda} such that $\xi\leq u \leq u_\lambda \lneqq u_{\lambda,1}$, which contradicts the minimality of $u_{\lambda,1}$, and implies \eqref{ulambda,1 leq ulambda,2 th1.3}.

Next define $v=u_{\lambda,2}-u_{\lambda,1}$ in $\Omega$, which is a nonnegative vector by \eqref{ulambda,1 leq ulambda,2 th1.3}. Then, since $u_{\lambda,1}$ and $u_{\lambda,2}$ are strong, $v$ satisfies, almost everywhere in $\Omega$,
\begin{align}\label{order1}
-\mathcal{L}^-[v]\geq -F[u_{\lambda,2}]+F[u_{\lambda,1}] \geq \lambda \mathcal{C}(x) v -2\mu_2 |Du_{\lambda,1}|\, |Dv|.
\end{align}
Hence, $v$ is a nonnegative strong solution of
\begin{align}\label{order2}
\textrm{$\mathcal{M}^-(D^2v)-\widetilde{b}\,|Dv|\leq 0\,$ in $\Omega$, \;\,for \, $\widetilde{b}=b+2\mu_2 \|Du_{\lambda,1}\|_\infty$.}
\end{align}

Of course $u_{\lambda,1}\neq u_{\lambda,2}$, then there exists one index $j$ such that $v_j \gneqq 0$ in $\Omega$. Consider the block from where it belongs; say the first one, $j\in \{ 1,\dots, t_1 \} $.
So, by \eqref{order2} and SMP, $v_j>0$ in $\Omega$.
Now look at the $j$-th column of this block. By \eqref{blocktriangularform} we know that there exists an index $k \ne j$, $k \in \{ 1,\dots, t_1 \}$, such that $c_{kj} \ne 0$.

Finally, let us turn back to \eqref{order1}, and consider the $k$-th equation of it. Since $c_{kj}v_j \gneqq 0$, by \eqref{order2} and SMP we obtain that $v_k>0$ in $\Omega$.
Using the full coupling of $\mathcal{C}(x)$, we can iterate this process $t_1$ times, by visiting all the equations. Therefore $v_j > 0$ for all $j \in \{ 1,\dots, t_1 \}$. Applying Hopf, we conclude that $v\gg 0$ in this block.
\end{proof}


\subsection{Proof of Theorems \ref{th1.5} and \ref{th1.4}}

Both results are an easy extension of considerations made in \cite{jfa19}, as long as we exploit Lemma \ref{lemma 4.2} instead of \cite[Lemma 6.2]{jfa19}.
In particular, for Theorem \ref{th1.5} we just need to be careful when applying the SMP, as we make explicit in the next lemma -- which is the extension to a system  of \cite[Lemma 6.14]{jfa19}.

\begin{claim} \label{step 1 th1.5}
$u_0$ is a strict strong supersolution of \eqref{Plambda}, for all $\lambda>0$.
\end{claim}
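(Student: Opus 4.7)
The plan is to pass from the nonlinear system to linearized differential inequalities for $v := u_0 - u$, apply componentwise SMP, and then use the block-triangular structure together with $(H_4)$ to propagate positivity across all components.

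First, since $u_0$ is the strong solution of $(P_0)$, we have $-F_i[(u_0)_i] - \langle M_i Du_0, Du_0\rangle = h_i$, and because $\lambda>0$, $c_{ij}\geq 0$ and $u_0\leq 0$, each entry of $\lambda\mathcal{C}(x)u_0$ is nonpositive. Hence $u_0$ is a strong supersolution of \eqref{Plambda} with nonnegative slack $g_i := -\lambda(\mathcal{C}u_0)_i \geq 0$ in the $i$-th equation. For strictness, take any $L^p$-viscosity subsolution $u$ with $u\le u_0$ (strong by $(H_2)$), set $v_i=(u_0)_i - u_i\geq 0$, and subtract componentwise. Using $(SC)$ to bound $F_i[u_i]-F_i[(u_0)_i] \leq -\mathcal{M}^-(D^2 v_i)+b|Dv_i|$, expanding
\begin{equation*}
\langle M_i Du_0, Du_0\rangle - \langle M_i Du, Du\rangle = \langle M_i(Du_0+Du), Dv_i\rangle
\end{equation*}
by symmetry of $M_i$, and absorbing this first-order term into the drift via the $C^{1,\alpha}$-bounds on $u,u_0$, we arrive at the linearized cooperative system
\begin{equation*}
\mathcal{M}^-(D^2 v_i) - \widetilde{b}\,|Dv_i| + \lambda(\mathcal{C}v)_i \;\le\; \lambda(\mathcal{C}u_0)_i \;\le\; 0,\qquad i=1,\dots,n,
\end{equation*}
with $\widetilde{b}=b+\mu_2(\|Du_0\|_\infty+\|Du\|_\infty)$. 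Since $\lambda(\mathcal{C}v)_i\geq 0$, each $v_i\geq 0$ satisfies $\mathcal{L}^-[v_i]\leq 0$, so SMP (Theorem \ref{WHI}) gives $v_i\equiv 0$ or $v_i>0$ in $\Omega$.

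The key step is to rule out $v_i\equiv 0$ for any $i$. Let $I=\{i : v_i\equiv 0\}$ and $J=\{i : v_i>0\}$. For each $i\in I$, substituting $v_i\equiv 0$ into the inequality above yields $0\le\lambda(\mathcal{C}v)_i\le\lambda(\mathcal{C}u_0)_i\le 0$, hence $(\mathcal{C}v)_i\equiv 0$ and $(\mathcal{C}u_0)_i\equiv 0$; in particular $c_{ij}\equiv 0$ for every $i\in I$ and $j\in J$. By the irreducibility of each diagonal block $\calc_{kk}$ in \eqref{blocktriangularform}, no block $S_k$ can be split between $I$ and $J$, so each $S_k$ lies entirely in $I$ or entirely in $J$. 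If some $S_k\subset I$, then $(\mathcal{C}u_0)_i\equiv 0$ for every $i\in S_k$, contradicting $(H_4)$ (in the $1\times 1$ case, this reduces exactly to the $(H_3)$-type requirement on the block). Thus $I=\emptyset$, i.e. $v_i>0$ in $\Omega$ for all $i$, and Hopf's lemma (the boundary companion to SMP stated after Theorem \ref{WHI}) gives $\partial_\nu v_i>0$ at boundary points where $v_i$ vanishes, which is precisely $u\ll u_0$ in the sense of Definition \ref{def1.2}.

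The main obstacle is the block-structured bookkeeping: the inequality for $v_i$ only couples $v$ through $(\mathcal{C}v)_i$, so one must use irreducibility within each block to force the dichotomy $S_k\subset I$ or $S_k\subset J$, and then invoke $(H_4)$ to exclude the former. Everything else is a straightforward linearization plus a single application of SMP per component.
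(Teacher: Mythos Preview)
Your argument is essentially correct and in some ways cleaner than the paper's, but it contains one genuine slip and takes a different technical route worth noting.

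\textbf{The slip.} You write ``(strong by $(H_2)$)'' for the subsolution $u$. This is false: $(H_2)$ gives $W^{2,p}$-regularity only for \emph{solutions} of the Dirichlet problem, not for arbitrary $L^p$-viscosity subsolutions of $(P_\lambda)$. Your pointwise subtraction and the use of $(SC)$ as an a.e.\ inequality are therefore unjustified as written. The fix is standard and is exactly what the paper does: since $u_0$ is strong, you can use it as a test function in the viscosity subsolution inequality for $u$, and obtain your linearized inequality for $v=u_0-u$ in the $L^p$-viscosity sense. When $v_i\equiv 0$ this function is trivially in $W^{2,p}$, so by Proposition~\ref{Lpiffstrong.quad} the inequality holds a.e.\ and your conclusion $0\le\lambda(\mathcal{C}v)_i\le\lambda(\mathcal{C}u_0)_i\le 0$ is valid.

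\textbf{Comparison with the paper.} The paper handles the quadratic gradient term differently: it bounds $\langle M_iDu_0,Du_0\rangle-\langle M_iDu,Du\rangle$ by $-\mu_2|DU|^2-2\mu_2|Du_0||DU|$ (a quadratic expression in $|DU|$), and then removes the quadratic part via the exponential change of Lemma~\ref{lemma2.3arma}, obtaining a drift $\widehat b=b+2\mu_2\|Du_0\|_\infty$ that depends only on $u_0$. You instead use the symmetric identity $\langle M_ia,a\rangle-\langle M_ib,b\rangle=\langle M_i(a+b),a-b\rangle$ and the fact that $u\in E=C^1(\overline\Omega)^n$ to get a linear inequality directly, with drift $\widetilde b=b+\mu_2(\|Du_0\|_\infty+\|Du\|_\infty)$. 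This is simpler (no exponential change) at the cost of a $u$-dependent coefficient, which is harmless here since strictness is proved one $u$ at a time. Your index-set argument with $I,J$ is also a tidier repackaging of the paper's iterative block-by-block SMP step; both exploit irreducibility of the diagonal blocks to force each $S_k$ into $I$ or $J$, and both finish by invoking $(H_4)$ to rule out $S_k\subset I$.
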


\begin{proof}
Since $\lambda \mathcal{C}(x)u_0\lneqq 0$ in $\Omega$, $u_0$ is a strong supersolution of \eqref{Plambda}. To see that it is strict, we take $u\in E$ an $L^p$-viscosity subsolution of \eqref{Plambda} such that $u\leq u_0$ in $\Omega$, and set $U:=u_0-u$. Then, since $u_0$ is strong, $U$ is an $L^p$-viscosity supersolution of
\begin{align*}
-\mathcal{L}^-[U]&\geq \lambda \mathcal{C}(x) U -\langle M(x)D U,D U\rangle +\langle M(x)D u_0,D U\rangle+\langle M(x)D u_0,D U\rangle\\
&\geq - \mu_2\, |DU|^2-2\mu_2\, |Du_0|\,|DU|,
\end{align*}
so $\mathcal{\widehat{L}}^-[w]\leq 0$ in $\Omega$ in the $L^p$-viscosity sense, where
\begin{align}\label{def L-hat}
\mathcal{\widehat{L}}^- [w]:=\mathcal{M}^-(D^2w)-\widehat{b}\,|Dw|, \;\; \textrm{ for \; $\widehat{b}=b+2\mu_2 \,\|Du_0\|_\infty$,  }
\end{align}
and $mw_i=1-e^{-mU_i}$, $m=\mu_2/\lambda_P$, by Lemma \ref{lemma2.3arma}, for $i=1,\cdots , n$.

Assume that there exists an index $j$ in the first block $\{ 1,\dots, t_1 \}$ such that $w_j(x_0)=0$.
Then by SMP we have $w_j \equiv 0$, hence $U_j \equiv 0$. Let us turn back to \eqref{Plambda}, and consider the $j$-th equation. By \eqref{blocktriangularform} we know that there exists an index $k \ne j$, $k \in \{ 1,\dots, t_1 \}$, such that $c_{jk} \ne 0$. This, combined with $U_j \equiv 0$, implies $U_k(x_1)=0$ for some point $x_1$. We now apply again SMP, to get $U_k \equiv 0$. As each diagonal block in $\mathcal{C}(x)$ is fully coupled, we can iterate $t_1$ times, and visit all the equations, therefore $U_j \equiv 0$ for any $j \in \{ 1,\dots, t_1 \}$.
However, hypothesis \eqref{cu0 nonzero} provides a contradiction, and hence $U_j > 0$ for all $j \in \{ 1,\dots, t_1 \}$.
Taking into account each block separately, and applying Hopf, we conclude $U\gg 0$.
\end{proof}

As for Theorem  \ref{th1.4}, showing that every nonnegative supersolution in $E$ of \eqref{Plambda} for $\lambda>0$ satisfies $u \gg u_0$ follows by analogous considerations to those made in the proof of Claim \ref{step 1 th1.5} above.
Everything else works as in the scalar case, up to obvious modifications. The only point which requires some attention in our multiplicity analysis is the analog of Claim 6.20 in \cite{jfa19} which is our Claim~\ref{no nonnegative sol of Plambda for lambda large} ahead.
Recall that nonexistence type results were obtained in Lemma \ref{lema Plambda,k has no solutions} via \eqref{ctilde e A for all k natural}. There, the possibility of taking a large parameter $k$ overcame the difficulty.
Here we have a different situation because we need to conclude the existence of two distinct positive solutions without using Proposition \ref{lemma 4.2} -- note that in Theorem \ref{th1.5} it is simpler as soon as we have $u_0$ as supersolution.
Therefore we need to work with problem \eqref{Plambda} itself, in which nonexistence for the system does not seem to be a consequence of the scalar framework, at least not in the general case.

\begin{claim} \label{no nonnegative sol of Plambda for lambda large}
\eqref{Plambda} has no nonnegative $L^p$-viscosity supersolutions for $\lambda$ large.
\end{claim}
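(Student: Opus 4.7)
The plan is to mirror the proof of Lemma \ref{lema Plambda,k has no solutions}, where the size of the parameter $k$ led to nonexistence; here the largeness of $\lambda$ must play the analogous role, and the block triangular form \eqref{blocktriangularform} together with \eqref{1x1 blocks nonzero} enables a clean reduction.

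Since $\calc_{kl}\equiv 0$ for $k<l$, the first $t_1$ equations of $(P_\lambda)$ involve only $u_1,\dots,u_{t_1}$, so any nonnegative supersolution of $(P_\lambda)$ restricts to a nonnegative supersolution of the block-1 sub-system, and it suffices to rule out nonnegative supersolutions of that sub-system when $\lambda$ is large. If $t_1=1$ then $c_{11}\gneqq 0$ by \eqref{1x1 blocks nonzero} and nonexistence for large $\lambda$ is the scalar statement in \cite{jfa19} (the analogue of Claim 6.20 there). So the genuinely new case is $t_1\geq 2$ with $\calc_{11}$ irreducible.

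In that case, set $m=\mu_1/\Lambda_P$ and $v_i=(e^{mu_i}-1)/m\geq 0$. Applying Lemma \ref{lemma2.3arma} to the block-1 supersolution inequality $\mathcal{L}^-[u_i]+\mu_1|Du_i|^2\leq-\lambda\sum_j c_{ij}u_j+G_i$ (with $G_i$ bounded in terms of $h_i$ and $F_i(\cdot,0,0)$) produces, in the $L^p$-viscosity sense,
\[ -\mathcal{L}^-[v_i]\;\geq\;\tfrac{\lambda}{m}(1+mv_i)\sum_{j=1}^{t_1}c_{ij}(x)\log(1+mv_j)\,-\,(1+mv_i)\,\|G\|_\infty,\qquad i=1,\dots,t_1. \]
By the strong maximum principle and Hopf lemma for cooperative fully coupled systems, either $v\equiv 0$ in $\Omega$, which contradicts the inequality once $\lambda$ is large, or every $v_i>0$ in $\Omega$. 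In the latter case I would invoke the principal eigenpair $(\lambda_1^*,\Phi)$ of the cooperative fully coupled system $-\mathcal{L}^-[\varphi_i]=\lambda_1^*\sum_j c_{ij}\varphi_j$ in $\Omega$, $\Phi|_{\partial\Omega}=0$, with $\Phi=(\varphi_1,\dots,\varphi_{t_1})>0$ in $\Omega$, whose existence is classical for such systems, see \cite{BuscaSirakov}. Taking the maximal $t^*>0$ with $t^*\Phi\leq v$ in $\Omega$ and using the elementary bound $(1+x)\log(1+x)\geq x$ to reduce the nonlinear right-hand side to the linear cooperative form $\lambda\sum_j c_{ij}v_j$, one produces a strict inequality at the system level of the type contemplated by Proposition 4.1 of QB, contradicting the characterization of $\lambda_1^*$ as soon as $\lambda/m$ exceeds $\lambda_1^*$ by a fixed amount depending on $\|G\|_\infty$.

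The main obstacle is this final comparison. The nonlinear $\log$ combined with the multiplicative factor $(1+mv_i)$ prevents a direct reduction to the scalar nonexistence argument: the naive simplification via $\underline v=\min_iv_i$ gives a clean scalar inequality only if $\min_i\sum_jc_{ij}(x)\gneqq 0$, whereas irreducibility of $\calc_{11}$ only guarantees that each row sum $\sum_jc_{ij}(x)\gneqq 0$, and their positivity sets need not intersect. This is exactly the point where the system structure cannot be bypassed; the resolution requires using the vector eigenfunction $\Phi$ rather than a scalar one, together with the viscosity-type principal eigenvalue theory for cooperative fully coupled fully nonlinear systems from \cite{BuscaSirakov}.
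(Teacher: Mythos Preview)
Your exponential-change route has a real gap at the linearization step. The bound $(1+x)\log(1+x)\ge x$ applies only to same-index terms, whereas what you must bound below is $(1+mv_i)\sum_j c_{ij}(x)\log(1+mv_j)$; for $j\ne i$ you only get $(1+mv_i)\log(1+mv_j)\ge\log(1+mv_j)$, and since $\log(1+mv_j)/v_j\to 0$ as $v_j\to\infty$ there is no uniform linear lower bound by a constant times $v_j$. The sliding argument with a system eigenfunction $\Phi$ that you sketch still needs exactly such a pointwise linear lower bound to manufacture an inequality of the form $-\mathcal L^-[v_i]\ge\lambda\sum_j c_{ij}v_j$ at the touching point, so passing from a scalar to a vector eigenfunction does not remove the obstacle.

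The paper avoids the logarithm altogether by a different substitution. Recall this claim lives inside the proof of Theorem~\ref{th1.4}, so $u_0\ge 0$ and hypothesis~\eqref{cu0 nonzero} are available; you are not using either. Setting $v=u-u_0$ and using that $u_0\in C^1(\overline\Omega)$ is a strong solution of $(P_0)$, one expands the quadratic form to obtain
\[
-\mathcal{L}^-[v_i]\ \ge\ \lambda\textstyle\sum_jc_{ij}(x)v_j+\lambda(\calc u_0)_i-2\mu_2\,|Du_0|\,|Dv_i|,
\]
so the gradient term is absorbed into a first-order coefficient bounded by $b+2\mu_2\|Du_0\|_\infty$, and the inequality for $v$ is genuinely linear; the required strictness comes from $\lambda\,\calc u_0\gneqq 0$. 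After showing $v\gg 0$ via the SMP iteration of Claim~\ref{step 1 th1.5}, the paper takes $\underline v=\min_{1\le i\le t_1}v_i$, replaces each $v_j$ by $\underline v$, and reduces to a single scalar inequality with weight $\widehat c(x)=\min_{1\le i\le t_1}\sum_j c_{ij}(x)$, concluding via Proposition~\ref{th4.1 QB} and the eigenvalue $\widehat\lambda_1=\lambda_1^+(\widehat{\mathcal L}^-(\widehat c\,),\Omega)$. The missing idea in your proposal is therefore the subtraction of $u_0$, not a vector eigenvalue theory. (Incidentally, the paper uses precisely the scalar weight $\widehat c$ that you flagged; your observation that irreducibility of $\calc_{11}$ alone does not force $\widehat c\gneqq 0$ is a fair point which the paper does not address.)
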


\begin{proof}
Consider the matrix $\mathcal{C}(x)$ in the form \eqref{blocktriangularform}.

\smallskip

Let $\lambda\geq \widehat{\lambda}_1$, where $\widehat{\lambda}_1={\lambda}_1^+\, ( \widehat{\mathcal{L}}^-(\widehat{c}\,),\Omega )>0$ is the principal eigenvalue of the operator $\mathcal{\widehat{L}}^-$ defined in \eqref{def L-hat}, but now with weight $\widehat{c}(x)\gneqq 0$, where
$$
 \widehat{c} (x)=\min_{1\leq i\leq t_1 } \sum_{j=1}^n c_{ij}(x)  \;\; \textrm{a.e. in } \Omega, \;\;\textrm{ with $t_1$ from \eqref{blocktriangularform}},
$$
which is associated to the positive eigenfunction $\widehat{\varphi}_1={\varphi}_1^+\,( \widehat{\mathcal{L}}^-(\widehat{c}\,),\Omega )\in W^{2,p}(\Omega)$, that is,
\begin{align} \label{def phi1tilde+ of L-(c) ch6}
(\widehat{\mathcal{L}}^-+\widehat{\lambda}_1\,\widehat{c}\,) \,[\,\widehat{\varphi}_1] = 0 \textrm{ and }  \;\;\widehat{\varphi}_1 >0  \mbox{ in } \Omega
, \;\;\widehat{\varphi}_1=0  \mbox{ on } \partial\Omega.
\end{align}

Notice that if $t_1=1$, then $ \widehat{c} (x)=c_{11}(x)$ which is nontrivial by hypothesis \eqref{1x1 blocks nonzero}.

Suppose, then, in order to obtain a contradiction, that there exists a nonnegative $L^p$-viscosity supersolution $u$ of \eqref{Plambda} and set $v=u-u_0$ in $\Omega$.
One proves $v\gg 0$ in $\Omega$ by performing the same SMP argument done in Claim \ref{step 1 th1.5}.
Now, since $u_0$ is strong, we can use it as a test function into the definition of $L^p$-viscosity supersolution of $u$, to obtain
\begin{align*}
-\mathcal{L}^-[v]&\geq \lambda \mathcal{C}(x)v+\lambda \mathcal{C}(x) u_0 +\langle M(x)D v, D v\rangle +\langle M(x)D v, D u_0\rangle +\langle M(x)D u_0, D v\rangle \\
&\gneqq \widehat{\lambda}_1 \mathcal{C}(x) v-2\mu_2 |D u_0|\, |D v|,
\end{align*}
using $\mathcal{C}(x)u_0\gneqq0$. Then each $v_i$ satisfies
$-\widehat{\mathcal{L}}^-[v_i]\gneqq \widehat{\lambda}_1\, \widehat{c}(x) \,\underline{v}$\; in $\Omega$, for $i=1,\cdots, t_1$, in the $L^p$-viscosity sense, where
$\underline{v}:=\min_{1 \leq i \leq t_1 } v_i$, since $\lambda , c_{ij}, v_i\geq 0$. Hence,
\begin{align}\label{eq v L-(c) to absurd th1.4}
(\widehat{\mathcal{L}}^-+\widehat{\lambda}_1 \,\widehat{c}\,)\, [\,\underline{v}\,] \lneqq  0 \;\;\textrm{ and }\;\; \underline{v}> 0 \;\mbox{ in }  \Omega
\end{align}
Thus we apply Proposition \ref{th4.1 QB} to \eqref{def phi1tilde+ of L-(c) ch6} and \eqref{eq v L-(c) to absurd th1.4}, from where $\underline{v}=t\widehat{\varphi}_1$ for some $t>0$. But this contradicts  \eqref{eq v L-(c) to absurd th1.4}, since $(\widehat{\mathcal{L}}^-+\widehat{\lambda}_1\,\widehat{c}\,) \,[\,t\widehat{\varphi}_1]=0$ in $\Omega$.
\end{proof}

In the next section we prove the second part of Theorem \ref{th1.4} only in the scalar case $n=1$, since the extension to systems can be established as above.

\section{Complementary multiplicity for scalar equations}\label{section scalar}

Here and in the next section, $E=C^1(\overline{\Omega})$. Now we consider the scalar problem
\begin{align}\label{Plambda,gamma} \tag{$P_{\lambda,\gamma}$}
\left\{
\begin{array}{rclcc}
-F[u] &=&\lambda c(x)u+\langle M(x)D u, D u \rangle +\gamma  h(x) &\mbox{in} & \Omega  \\
u&=& 0 &\mbox{on} & \partial\Omega
\end{array}
\right.
\end{align}
where $\Omega$ is a bounded $C^{1,1}$ domain in $\rN$, $\lambda\in \mathbb{R}$, $\gamma> 0$, $N\geq 1$, $c,h\in L^p(\Omega)$, $c\gneqq 0$, $M$ is a bounded matrix, and $F$ is a fully nonlinear uniformly elliptic operator which satisfies \eqref{SC}, \eqref{ExistUnic M bem definido}, and \eqref{Hstrong}.
The results in this section  are  related to \cite{siam2010} and in particular extend to nondivergence form equations \cite[Corollary 1.9]{CJ}, where variational problems were considered.

\smallskip

By Theorem 1(ii) of \cite{arma2010}, there exists $\Gamma_0>0$ such that the problem $(P_{0,\gamma})$ has an $L^p$-viscosity solution, namely $u_{0,\gamma}$, for each $\gamma\in [0,\Gamma_0]$. Note that $u_{0,\gamma}$ is strong by regularity, and so unique by Theorem 1(iii) of \cite{arma2010}.

\smallskip

Say that $h\gneqq 0$, then $u_{0,\gamma}\geq 0$, with $c(x)u_{0,\gamma}\gneqq 0$, for all $\gamma\in (0,\Gamma_0]$ (see Remark 6.25 in \cite{jfa19}).
Thus, there exists $\overline{\lambda}_1 >0$ such that \eqref{Plambda,gamma} has at least two positive solutions for $\lambda\in (0,\bar{\lambda}_1)$, it has at least one nonnegative strong solution at $\lambda=\bar{\lambda}_1$, and no nonnegative $L^p$-viscosity solutions for $\lambda> \bar{\lambda}_1$.

\medskip

Let $\lambda\geq {\lambda}_1^-$, where
${\lambda}_1^-:={\lambda}_1^-\, ( {\mathcal{L}}^+(c),\Omega )>0$
is the principal positive weighted eigenvalue of $\mathcal{L}^+$ associated to the negative eigenfunction
${\varphi}_1^-:={\varphi}_1^-\,( {\mathcal{L}}^+(c),\Omega )\in W^{2,p}(\Omega)$
from Proposition \ref{exist eig for F-c geq 0}, that is,
\begin{align} \label{def phi1tilde+ of L-(c)}
({\mathcal{L}}^+ +{\lambda}_1^- c) [\varphi_1^-] = 0  \;\textrm{ and }\;{\varphi}_1^- <0\;  \mbox{ in } \Omega , \quad
\varphi_1^- =0  \mbox{ on } \partial\Omega.
\end{align}
Notice that, since $\mathcal{L}^+$ is convex, then
\begin{align}\label{lambda1+ leq lambda1-}
\lambda_1^+:=\lambda_1^+\, ( {\mathcal{L}}^+(c),\Omega )\leq {\lambda}_1^-\, ( {\mathcal{L}}^+(c),\Omega )=\lambda_1^-.
\end{align}

\begin{claim}\label{lambdabar1 cota}
$\bar{\lambda}_1<\lambda_1^-$.
\end{claim}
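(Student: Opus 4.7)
The plan is to argue by contradiction, showing that for every $\lambda \geq \lambda_1^-$ the problem $(P_{\lambda,\gamma})$ admits no nonnegative $L^p$-viscosity solution. Combined with the fact (from the preceding theorem) that $(P_{\bar\lambda_1,\gamma})$ does admit a nonnegative solution, this forces $\bar\lambda_1 < \lambda_1^-$ strictly. So I suppose $u \geq 0$ is a solution for some $\lambda \geq \lambda_1^-$, and I aim to construct a positive function $v$ satisfying $(\mathcal{L}^- + \lambda_1^- c)[v] \lneqq 0$ in $\Omega$ with $v = 0$ on $\partial\Omega$, then invoke the simplicity-type Proposition \ref{th4.1 QB} to get a contradiction.

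The key step is an exponential change tailored to absorb the quadratic gradient term. From the structure condition \eqref{SC} and $\langle M(x)Du,Du\rangle\geq \mu_1|Du|^2$ I first get
\[
\mathcal{M}^-(D^2u) + \mu_1|Du|^2 - b|Du| + \lambda c u + \gamma h \leq 0
\]
in the $L^p$-viscosity sense (after absorbing $F(x,0,0)$ into the data, as in the preliminary section). Setting $mv = e^{mu}-1$ with $m=\mu_1/\Lambda_P$, Lemma \ref{lemma2.3arma} converts the $\mu_1|Du|^2$ term exactly into $\mathcal{M}^-(D^2v)/(1+mv)$, and since $|Du|(1+mv) = |Dv|$, multiplying through by $1+mv$ produces
\[
\mathcal{L}^-[v] \leq -(1+mv)\lambda c u - (1+mv)\gamma h \quad\text{in } \Omega.
\]
Now I use the elementary inequality $(1+t)\ln(1+t)\geq t$ for $t\geq 0$, equivalently $(1+mv)u \geq v$, so $(1+mv)\lambda c u \geq \lambda c v$; together with $(1+mv)\geq 1$ and $\gamma h\geq 0$ this yields
\[
(\mathcal{L}^- + \lambda c)[v] \leq -\gamma h \lneqq 0 \quad\text{in } \Omega.
\]
Since $u\geq 0$, $u\not\equiv 0$ (otherwise $\gamma h\equiv 0$), and $(\mathcal{L}^- + \lambda c)[u] \leq 0$, the SMP and Hopf lemma give $u>0$, hence $v>0$ in $\Omega$.

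To conclude, I use the identity $\mathcal{L}^-[w] = -\mathcal{L}^+[-w]$ to identify $\lambda_1^-(\mathcal{L}^+,c) = \lambda_1^+(\mathcal{L}^-,c)$, so $\varphi_1^+ := -\varphi_1^-$ is the positive principal eigenfunction of $\mathcal{L}^-$ with eigenvalue $\lambda_1^-$. Because $\lambda \geq \lambda_1^-$ and $cv \geq 0$,
\[
(\mathcal{L}^- + \lambda_1^- c)[v] = (\mathcal{L}^- + \lambda c)[v] + (\lambda_1^- - \lambda)cv \leq -\gamma h \lneqq 0.
\]
Proposition \ref{th4.1 QB} then forces $v = t\varphi_1^+$ for some $t>0$, whence $(\mathcal{L}^- + \lambda_1^- c)[v] = 0$, contradicting the strict inequality. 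The main obstacle I anticipate is the bookkeeping in the exponential change: the factor $(1+mv)$ must be handled carefully so that, after trading $(1+mv)u$ for $v$, we recover a genuine inequality of the form $(\mathcal{L}^- + \lambda c)[v]\leq -\gamma h$ without picking up spurious multiplicative factors on the weight $c$; the elementary identity $(1+mv)u\geq v$ is exactly what makes this work.
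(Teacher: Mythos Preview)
Your argument is correct, but it is more elaborate than needed. The paper's proof is a one-liner: since $\langle M(x)Du,Du\rangle\ge 0$, you can simply drop the quadratic gradient term. From $-F[u]=\lambda c u+\langle M Du,Du\rangle+\gamma h$ and \eqref{SC} one gets directly
\[
-\mathcal{L}^-[u]\ge \lambda c(x)u+\gamma h(x)\gneqq \lambda_1^-\,c(x)u,
\]
so SMP gives $u>0$, and Proposition~\ref{th4.1 QB} applied to $u$ itself (together with the positive eigenfunction $-\varphi_1^-$ of $\mathcal{L}^-$ at level $\lambda_1^-$) yields the contradiction. The exponential change, the inequality $(1+mv)u\ge v$, and the whole passage through $v$ are unnecessary detours; they are the tools one uses when the quadratic term has the \emph{wrong} sign and must be absorbed, whereas here it helps you and can just be discarded. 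Your version does buy you nothing extra, since after the change you still end up invoking Proposition~\ref{th4.1 QB} in exactly the same way.
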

In other words, Claim \ref{lambdabar1 cota} says that  \eqref{Plambda,gamma} does not admit nonnegative solutions for $\lambda\geq \lambda^-_ 1$.
To see this, we observe that if a such solution $u$ existed, since $\gamma h\gneqq0$, then $u$ would satisfy
$
-\mathcal{L}^-[u]\gneqq {\lambda}_1^- c(x) u,
$
so $u>0$ in $\Omega$ by SMP.
But then this strict inequality combined with Proposition \ref{th4.1 QB} and \eqref{def phi1tilde+ of L-(c)} produces $u=t{\varphi}_1^-$ for some $t>0$, a contradiction.

\begin{teo}\label{th cite introd}
There exists a positive $\Gamma\leq\Gamma_0$ such that, for each $\gamma\in (0,\Gamma)$, we have the existence of $\bar{\lambda}_2>0$ for the problem \eqref{Plambda}=\eqref{Plambda,gamma} satisfying
\begin{enumerate}[(i)]
\item for $\lambda> \bar{\lambda}_2$, \eqref{Plambda} has at least two solutions with $u_{\lambda,1}\ll 0$ in $\Omega$ and $\min_{\overline{\Omega}} u_{\lambda,2}<0$;

\item for $\lambda= \bar{\lambda}_2$, \eqref{Plambda} has at least one nonpositive solution, which is unique if $F$ is convex;

\item for $\lambda < \bar{\lambda}_2$, the problem \eqref{Plambda} has no nonpositive solution.
\end{enumerate}
\end{teo}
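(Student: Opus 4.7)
The plan is to mirror the argument given for the first part of Theorem \ref{th1.4} (the nonnegative branch), adapting each step to the nonpositive side. Since $u_{0,\gamma}\ge 0$ cannot play the role of a strict nonpositive supersolution the way it did in Claim \ref{step 1 th1.5}, the first task is to manufacture a strict strong nonpositive supersolution directly from the negative principal eigenfunction $\varphi_1^-$ of $\mathcal{L}^+$.

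More precisely, for small $t>0$ put $\eta := t\,\varphi_1^-\ll 0$. Using \eqref{SC}, the positive $1$-homogeneity of $\mathcal{L}^+$, and \eqref{def phi1tilde+ of L-(c)}, one gets $-F[\eta]\ge -\mathcal{L}^+[\eta]=t\lambda_1^-\,c(x)\varphi_1^-$, so $\eta$ is a supersolution of \eqref{Plambda,gamma} provided
\begin{equation*}
(\lambda-\lambda_1^-)\,c(x)\,t\,|\varphi_1^-| \;\ge\; t^2\langle M(x)D\varphi_1^-,D\varphi_1^-\rangle + \gamma\, h.
\end{equation*}
For $\lambda>\lambda_1^-$ the left-hand side is linear in $t$ while the right-hand side grows as $t^2$ plus the fixed quantity $\gamma h$. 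Hence one fixes $\Gamma>0$ and, for each $\gamma\in(0,\Gamma)$, picks $t=t(\lambda,\gamma)$ making the inequality true pointwise, comparing the boundary decay of $c(x)|\varphi_1^-|$ with that of $h$ via Hopf applied to $\varphi_1^-$. Strictness in the sense of Definition \ref{def2.1} follows as in Claim \ref{step 1 th1.5}. A matching strict subsolution $\xi\ll\eta$ is produced by $\xi = T\,\varphi_1^-$ with $T$ large: after the exponential change of Lemma \ref{lemma2.3arma}, the quadratic gradient term yields an additional $T^2$-contribution which overrides the linear-in-$T$ eigenvalue term, forcing the subsolution inequality for $T$ large enough (the subsolution analog of the computation in Lemma \ref{lema Plambda,k has no solutions}).

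With the ordered pair $\xi\ll\eta$ in hand, Theorem \ref{th2.1} produces an $L^p$-viscosity solution $u_{\lambda,1}\in[\xi,\eta]$, which is strong by \eqref{Hstrong} and satisfies $u_{\lambda,1}\ll 0$. Set
\begin{equation*}
\bar\lambda_2 \;:=\; \inf\,\{\,\lambda>0\,:\,(P_{\lambda,\gamma})\ \text{admits a nonpositive}\ L^p\text{-viscosity solution}\,\},
\end{equation*}
which is finite by the above construction. A priori bounds for nonpositive solutions from Theorem \ref{apriori} (simpler than in the general case since $\lambda c u\le 0$) together with $C^{1,\alpha}$ compactness and stability give $\bar\lambda_2>0$, furnish a nonpositive solution at $\lambda=\bar\lambda_2$, and yield the nonexistence (iii) by the very definition of $\bar\lambda_2$. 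The ordering $u_{\lambda_2,1}\ll u_{\lambda_1,1}$ for $\bar\lambda_2<\lambda_1<\lambda_2$ is obtained by minimality of $u_{\lambda,1}$ and iteration of SMP/Hopf as in Claim \ref{step 4 proof th1.3}; uniqueness at $\bar\lambda_2$ under convexity of $F$ is identical to the argument on \cite[p.~1839]{jfa19}.

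For the second solution $u_{\lambda,2}$ when $\lambda>\bar\lambda_2$ one imitates the degree argument from Theorem \ref{th1.4}, with the homotopy now modifying $\gamma h$ into $\gamma h-k\widetilde h$, so that for $k$ large the modified problem admits no nonpositive solutions by the scalar counterpart of Lemma \ref{lema Plambda,k has no solutions}; this gives degree $0$ on a large ball. The index on the open set $\mathcal{O}=\{\xi\ll u\ll\eta\}$ equals $1$ by Theorem \ref{th2.1}(i), and excision produces $u_{\lambda,2}$ outside $[\xi,\eta]$, so $\min u_{\lambda,2}<0$ but $u_{\lambda,2}$ is not entirely below $\eta$. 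The \emph{main obstacle} is the construction in Step~1: one must simultaneously beat the anti-maximum-effect threshold $\lambda_1^-$ with the parameter $\lambda$, beat the quadratic gradient term by taking the amplitude $t$ small, and beat the perturbation $\gamma h$ by taking $\Gamma$ small, all of this uniformly up to $\partial\Omega$, where the quantitative Hopf decay of $\varphi_1^-$ has to match the profile of $h$. This quantitative interplay is precisely the anti-maximum principle developed in Section \ref{section eigenvalue} and is what forces the smallness assumption on $\gamma$.
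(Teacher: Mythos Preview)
Your overall strategy is the right one, but your Step~1---the construction of the nonpositive supersolution $\eta=t\varphi_1^-$---has a genuine gap. You need the pointwise inequality
\[
(\lambda-\lambda_1^-)\,c(x)\,t\,|\varphi_1^-(x)| \;\ge\; t^2\langle M(x)D\varphi_1^-,D\varphi_1^-\rangle(x) + \gamma h(x)
\]
to hold a.e.\ in $\Omega$, but the hypothesis on $c$ is only $c\gneqq 0$ in $L^p$: the set $\{c=0\}$ may have positive measure and may well intersect $\{h>0\}$ or the set where $D\varphi_1^-\ne 0$. On $\{c=0\}$ the left-hand side vanishes identically while the right-hand side stays strictly positive, so no choice of $t$ and $\gamma$ can rescue the inequality. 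Your appeal to ``the boundary decay of $c(x)|\varphi_1^-|$ versus $h$ via Hopf'' addresses the wrong difficulty: the obstruction is in the interior, not at $\partial\Omega$. The same issue contaminates the subsolution $\xi=T\varphi_1^-$, and in any case the paper already furnishes a strict strong subsolution $\xi_\lambda$ via Lemma~\ref{lemma 4.2}, so there is no need to build one by hand.

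The paper circumvents this by introducing a constant buffer. One takes $w$ a strong solution of
\[
-\mathcal{L}^+[w]=\lambda_0\,c(x)w+1+h(x)\quad\text{in }\Omega,\qquad w=0\quad\text{on }\partial\Omega,
\]
for some $\lambda_0\in(\lambda_1^-,\lambda_1^-+\varepsilon_0)$; existence comes from Theorem~\ref{th Exist>lambda1}, and the anti-maximum principle (Claim~\ref{AMP}) forces $w\ll 0$ for $\varepsilon_0$ small. Setting $\eta=\gamma w$ and using $-F[\eta]\ge -\mathcal{L}^+[\eta]=\lambda_0 c(x)\eta+\gamma+\gamma h$, the supersolution inequality for $(P_{\lambda_0,\gamma})$ reduces to $\gamma\ge\gamma^2\mu_2\|Dw\|_\infty^2$, i.e.\ $\gamma\le(\mu_2\|Dw\|_\infty^2)^{-1}=:\Gamma$. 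The ``$+1$'' absorbs both $\gamma h$ and the quadratic gradient term uniformly in $x$, which is exactly what your eigenfunction ansatz cannot do. After that, $\bar\lambda_2$ is defined as the infimum of $\lambda$ admitting a nonpositive strong supersolution $\eta_\lambda$ with $c\eta_\lambda\lneqq 0$; the strictness for $\lambda>\bar\lambda_2$, the degree computation, and the second solution proceed along the lines you sketched. Finally, $\bar\lambda_2>0$ is not a consequence of a priori bounds as you suggest, but of the fact (\cite[Remark~6.22]{jfa19}) that for small $\lambda$ every solution is nonnegative.
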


\begin{proof}
Firstly we are going to prove that there exists $\Gamma>0$ such that the problem $(P_{\lambda_0,\gamma})$ has a nonpositive supersolution $\eta_\gamma$, for all $\gamma\in (0, \Gamma)$, where $\lambda_0 $ is some positive number independent of $\lambda$ and $\gamma$.

\smallskip

Let $w $ be some (fixed) strong solution of
\begin{align}\label{Q}
\left\{
\begin{array}{rclcc}
-\mathcal{L}^+[w] &=&\lambda_0 \, c(x)w+1+ h(x) &\mbox{in} & \Omega  \\
w&=& 0 &\mbox{on} & \partial\Omega
\end{array}
\right.
\end{align}
for some $\lambda_0\in (\lambda_1^-,\lambda_1^-+\varepsilon_0)$, $\varepsilon_0>0$.
The existence of $w$ is ensured by Theorem \ref{th Exist>lambda1}, since the operator $\mathcal{L}^+$ satisfies the $W^{2,p}$ regularity hypothesis \eqref{Hstrong}.
\smallskip

Then, let $C_0 >0$ be such that $\|Dw\|_\infty^2 \leq C_0$, and set
$\Gamma:=\min \{ \Gamma_0, (\mu_2\, C_0)^{-1}\}$.

\begin{claim}\label{AMP}
Up to taking a smaller $\varepsilon_0$, we have $w\ll 0$ in $\Omega$.
\end{claim}

Assuming Claim \ref{AMP}, we define $\eta=\eta_\gamma : = \gamma w $, for $0<\gamma\leq \Gamma $, which is a negative function.
Then we have, in the $L^p$-viscosity sense,
\begin{align*}
-F[\eta] & \geq -\mathcal{L}^+[\eta] =\lambda_0\, c(x)\eta +\gamma+\gamma h(x)
\geq \lambda_0\, c(x)\eta +\gamma^2 \mu_2 \, C_0+\gamma h(x) \\
&\geq \lambda_0\, c(x)\eta +\langle M(x)D\eta , D\eta\rangle +\gamma h(x).
\end{align*}
That is, $\eta$ is a supersolution of $(P_{\lambda_0,\gamma})$, for all $\gamma\in (0,\Gamma)$, with $\eta \ll 0$ in $\Omega$.

\begin{proof}[Proof of Claim \ref{AMP}]
We are going to prove a stronger result, i.e.\ that there exists  a small $\varepsilon_0 >0$ such that every solution $w\in E $ of \eqref{Q} satisfies $w< 0$ in $\Omega$ -- which in turn yields $w\ll 0$ in $\Omega$, by Hopf.

Assume the contrary, then there exists a sequence $\lambda_k \rightarrow \lambda_1^-$ and $w_k$ satisfying
\begin{align}
\left\{
\begin{array}{rclcc}
-\mathcal{L}^+[w_k] & = & \lambda_k\,c(x) w_k +f(x) &\mbox{in} & \Omega  \\
w_k&=& 0 &\mbox{on} & \partial\Omega ,
\end{array}
\right.
\end{align}
but each $w_k$ is such that
\begin{align}\label{contrad x0}
\textrm{$\max_{\overline{\Omega}} w_k =w_k(x_k)\geq 0$, where $x_k\in\Omega$, and $Du(x_k)=0$, for all $k$.}
\end{align}
By taking a subsequence, $x_k\rightarrow x_0\in \overline{\Omega}$. Since $f\not\equiv 0$, of course $w_k\not\equiv 0$, for all $k$.

We claim that there is a subsequence such that
\begin{align}\label{unif norm unbounded}
\|w_k\|_\infty \rightarrow \infty .
\end{align}
Indeed, if this was not the case, $\|w_k\|_\infty \leq C$, for some positive constant $C$ independent of $k$.
By $C^{1,\alpha}$ regularity, compact inclusion and stability, this would give us some $w\in E$, which is a viscosity solution of
\begin{align*}
\left\{
\begin{array}{rclcc}
-\mathcal{L}^+[w] & = & \lambda_1^-\,c(x) w +f(x) &\mbox{in} & \Omega  \\
w&=& 0 &\mbox{on} & \partial\Omega .
\end{array}
\right.
\end{align*}
Now, if $w$ was nonnegative in $\Omega$, it should be positive by SMP; then $\lambda_1^-\leq \lambda_1^+$ by the definition of $\lambda_1^+$. Hence $\lambda_1^-= \lambda_1^+$ by \eqref{lambda1+ leq lambda1-}. Proposition \ref{th4.1 QB} would imply so $w=t\varphi_1^+$, for some $t>0$, which contradicts $f\neq 0$.
Thus, we must have $w(x_1)<0$ for some $x_1\in \Omega$. This yields $w=t\varphi^-_1$, $t>0$ by Proposition \ref{th4.1 QB}, contradiction. Thus, \eqref{unif norm unbounded} holds.

\smallskip

Then, for the sequence in \eqref{unif norm unbounded}, we define $v_k:={w_k}/{\|w_k\|_\infty}$, which satisfies
\begin{align*}
\left\{
\begin{array}{rclcc}
-\mathcal{L}^+[v_k] & = & \lambda_k\,c(x) v_k +f/{\|w_k\|_\infty} &\mbox{in} & \Omega  \\
v_k&=& 0 &\mbox{on} & \partial\Omega .
\end{array}
\right.
\end{align*}
Since $\|v_k\|_{C^{1,\alpha}(\overline{\Omega})}\leq C$, then passing to a subsequence, $v_k$ converges in $E$ to some function $v$, which is a solution of
$-\mathcal{L}^+[v]  = \lambda_1^-c(x) v  $ in $\Omega$,  $v=0$ on $\partial\Omega$, by stability.
Note that $\|v\|_\infty = \lim_{k}| v_k (y_k)| =1$, for some sequence of points $y_k\in \overline{\Omega}$.

If we had $v(x_1)<0$ for some $x_1\in \Omega$, by Proposition \ref{th4.1 QB} we would obtain  $v=\varphi^-_1 <0$. Thus, by \eqref{contrad x0}, $v(x_0)=0$ and $x_0\in \partial\Omega$. So the application of Hopf at $x_0$ contradicts \eqref{contrad x0}.

Therefore, we must have $v\geq 0$ in $\Omega$, i.e.  $v>0$ in $\Omega$ by SMP. Then  $\lambda_1^-=\lambda_1^+$, by the definition of $\lambda_1^+$ and \eqref{lambda1+ leq lambda1-}. Hence, Proposition \ref{th4.1 QB} yields $v=\varphi_1^+>0$ in $\Omega$.
Now Hopf gives us $\partial_\nu v >0$ on $\partial\Omega$. This fact and the convergence of $v_k$ to $v$ in $E$ imply that $v_k>0$ in $\Omega$ for large $k$.
Therefore, for large $k$, $v_k$ is a solution of
\begin{align*}
-\mathcal{L}^+[v_k]  \gneqq  \lambda_1^+c(x) v_k  \;\textrm{ and } \;v_k> 0 \mbox{ in }  \Omega , \quad v_k= 0 \mbox{ on }  \partial\Omega .
\end{align*}
Thus $v_k=t\varphi_1^+$, for some $t>0$, by Proposition \ref{th4.1 QB} again. The above strict inequality finally provides the last contradiction, and proves Claim \ref{AMP}.
\end{proof}

Next let us fix some $\gamma\in (0,\Gamma]$ and look at the problem \eqref{Plambda} = \eqref{Plambda,gamma}.

Recall that \eqref{Plambda} has a strong strict subsolution $\xi_\lambda$ for all $\lambda\geq 0$.
However, notice that our $\eta$ constructed above, besides being a supersolution for only a fixed $\lambda_0$,  has no reason to be strict.
Nevertheless, we can check that a slight variation of the argument in the proof of Theorem 1.7 in \cite{CJ} ensures the strictness for an arbitrary $\lambda$ and enables us to use Theorem \ref{th2.1}.
For the sake of completeness, we give the details at the points in which the general context of $L^p$-viscosity solutions requests an extra care.

Note that $c(x)\eta \lneqq 0$ in $\Omega$.
Otherwise the problem $(P_{0})$ would have a solution $v$ such that $\xi_0\leq v\leq \eta< 0$, due to Lemma \ref{lemma 4.2} and the first part of Theorem \ref{th2.1}.
Then we define
\begin{align*}
\bar{\lambda}_2:=\inf \{\lambda \geq 0; \; \eqref{Plambda} \textrm{ has a strong supersolution } \eta_{\lambda}\leq 0 \textrm{ with } c(x)\eta_\lambda \lneqq 0 \;\} \leq \lambda_0.
\end{align*}

\smallskip

Let $\lambda>\bar{\lambda}_2$, then there exists $\tilde{\lambda}\in (\bar{\lambda}_2,\lambda)$ such that \eqref{Plambda} has a strong supersolution
$\eta_{\tilde{\lambda}}\leq 0$ with $c(x)\eta_{\tilde{\lambda}} \lneqq 0$.
But now $\eta_{\tilde{\lambda}}$ is a strong supersolution of $(P_\lambda)$, which is not a solution. So, proceeding as in Theorem 2.3 in \cite{jfa19} we see that $\eta$ is strict.
Then we use Theorem \ref{th2.1}(i) to obtain that $\mathrm{deg} (I-\mathcal{T}_\lambda, \mathcal{S}_\lambda,0)=1$, where
$$
\mathcal{S}_\lambda=\{ \xi_\eta \ll u \ll \eta_{\tilde{\lambda}} \}\cap \mathcal{B}_R,
$$
for some $R>0$. This gives us the first solution $u_{\lambda,1}\ll 0$. Thus, for $\lambda$ small, a second solution $u_{\lambda,2}$ satisfying $u_{\lambda,2}\gg u_{\lambda,1}$ is also established as in the scalar case, as well as the monotonicity of $u_{\lambda,1}$ with respect to $\lambda$, see \cite[Claim 6.9, Claim 6.12]{jfa19}.

On the other hand, if $\lambda > \bar{\lambda}_2$, we can only have a nonpositive solution $u$ satisfying $c(x)u\equiv 0$. In such a case, $\gamma h \gneqq 0$ and an exponential change from Lemma \ref{lemma2.3arma} generates a nonpositive solution of
$
\mathcal{L}^+[v] \lneqq 0
$
in $\Omega$, and $v<0$ in $\Omega$ by SMP. Since $\lambda^-_1 c(x)v\equiv 0$, these inequalities and \eqref{def phi1tilde+ of L-(c)}, in the application of Proposition \ref{th4.1 QB}, yield a contradiction.

Observe that $\bar{\lambda}_2$ cannot be zero by Remark 6.22 in \cite{jfa19}. Indeed, via eigenvalue arguments it was shown there that, for small values of $\lambda$, every solution must be nonnegative.

To finish, we notice that a sequence $\lambda_k \rightarrow \bar{\lambda}_2$ produces a sequence $u_{\lambda_k,1}$ of negative solutions of $(P_{\lambda_k})$. Then, a priori bounds on $[\bar{\lambda}_2, \bar{\lambda}_2+1]$, $C^{1,\alpha}$ estimates, compact inclusion and stability ensure the existence of an $L^p$-viscosity solution $u$ of $(P_{\bar{\lambda}_2})$, which is nonpositive by convergence, and strong by \eqref{Hstrong}.
This completes the proof.
\end{proof}

\begin{remark}
If $F$ is convex, 1-homogeneous and possesses eigenvalues, for instance if $F=\mathcal{L}^+$ or a HJB operator, then the estimate can be improved. In fact, in this case in Claim \ref{lambdabar1 cota} we use $\lambda_1^+(F(c))$ instead of $\lambda_1^+(\mathcal{L}^-(c))$, which gives us $$\bar{\lambda}_1<\lambda_1^+(F(c))\leq \lambda_1^-(F(c))<\bar{\lambda}_2.$$
\end{remark}


\section{A short miscellaneous on weighted eigenvalues}\label{section eigenvalue}

We consider the more general structure
\begin{align}\mathcal{M}_{\lambda, \Lambda}^- (X-Y)-b(x)|\vec{p}-\vec{q}|-d(x)\,\omega ((r-s)^+) \leq F(x,r,\vec{p},X) - F(x,s,\vec{q},Y) \label{SCG}\tag{$SCG$}  \\ \leq \mathcal{M}_{\lambda, \Lambda}^+ (X-Y)+b(x)|\vec{p}-\vec{q}|+d(x)\,\omega ((s-r)^+)\;\; \textrm{ a.e. } x\in \Omega \nonumber\end{align}
with $F(\cdot,0,0,0)\equiv 0$, where $0<\lambda \leq \Lambda$,\, $ b\in L^p_+ (\Omega)$, $ p>N$, $d\in L^\infty_+(\Omega)$, $\omega$ a Lipschitz modulus.
Here, the condition over the zero order term in \eqref{SCG} means that $F$ is proper/coercive, i.e.\ nonincreasing in $r$.
On $F$ we also impose  \eqref{ExistUnic M bem definido}, and 1-homogeneity such as
\begin{align}\label{homogeneity}
\textrm{$F(x,tr,t\vec{p},tX)=tF(x,r,\vec{p},X)$ for all $t\geq 0$.}
\end{align}

Notice that solvability in $L^N$-viscosity sense was used in \cite{regularidade}, but this notion is equivalent to solvability in $L^p$-sense from \eqref{ExistUnic M bem definido}, once we have the data $f$ in $L^p (\Omega)$, see \cite{tese}.

\smallskip

For any $c\in L^p(\Omega)$, with $c\gneqq 0$ and $p> N$, and $F$ satisfying the above assumptions, we can define, as in \cite{BNV, regularidade, QB},
\begin{align*}
\lambda_1^\pm=\lambda_1^\pm\,(F(c),\Omega)&=\sup\left\{ \lambda>0; \; \Psi^\pm(F(c),\Omega,\lambda)\neq \emptyset\right\}
\end{align*}
where
$$\Psi^\pm (F(c),\Omega,\lambda):=\left\{ \psi\in C(\overline{\Omega}); \; \pm\psi>0 \textrm{ in }\Omega,\; \pm (F[\psi]+\lambda c(x)\psi )\leq 0 \textrm{ in }\Omega \right\};$$
with inequalities holding in the $L^p$-viscosity sense (equivalent to $L^N$). Notice that, by definition,
$\lambda_1^\pm (G(c),\Omega)= \lambda_1^\mp (F(c),\Omega),$
where $G(x,r,p,X):=-F(x,-r,-p,-X)$.

We recall the following result on existence of eigenvalues with nonnegative unbounded weight, from  \cite{regularidade}.

\begin{teo}\label{exist eig for F-c geq 0}
Let $\Omega\subset\rN$ be a bounded $C^{1,1}$ domain, $c\in L^p(\Omega)$, $c\gneqq 0$ for $p>n$, $F$ as above, for $b,\, d\in L^\infty_+ (\Omega)$. Then $F$ has two positive weighted eigenvalues $\alpha_1^\pm>0$ corresponding to normalized and signed eigenfunctions $\varphi_1^\pm\in C^{1,\alpha}(\overline{\Omega})$ that satisfy
\begin{align} \label{eq exist eigen F c lambda1+}
\left\{
\begin{array}{rclcc}
F[\varphi_1^\pm]+\alpha_1^\pm c(x) \varphi_1^\pm &=& 0 &\mbox{in} & \Omega \\
\pm \varphi_1^\pm &>& 0 &\mbox{in} &\Omega \\
\varphi_1^\pm &=& 0 &\mbox{on} &\partial\Omega
\end{array}
\right.
\end{align}
in the $L^p$-viscosity sense, with $\max_{\overline{\Omega}}\,(\pm \varphi_1^\pm) =1$.
If, moreover, the operator $F$ satisfies \eqref{Hstrong}, then $\alpha_1^\pm =\lambda^\pm_1$ and the conclusion is valid also for $b\in L^p_+ (\Omega)$.
\end{teo}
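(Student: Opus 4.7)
My plan is to follow the classical Berestycki-Nirenberg-Varadhan scheme in its nonlinear form (Quaas-Sirakov, Birindelli-Demengel), adapting every step to the $L^p$-viscosity framework of \cite{regularidade} since the weight $c$ is merely in $L^p(\Omega)$ and not bounded. I would first verify well-definedness of $\lambda_1^+$. Nonemptyness of $\Psi^+(F(c),\Omega,\mu)$ for small $\mu>0$ follows by using \eqref{ExistUnic M bem definido} to solve $F[\psi]+\mu c\,\psi=-1$ (a small compact perturbation of $F$, invertible by fixed-point plus ABP) and then deducing $\psi>0$ via SMP (Theorem \ref{WHI}). Finiteness of $\lambda_1^+$ is obtained by comparison with $\mathcal{M}^-_{\lambda_P,\Lambda_P}$: for $\mu$ large enough, any $\psi\in\Psi^+$ would contradict the known principal-eigenvalue theory for the Pucci operator with $L^p$ weight, via a standard scaling and ABP argument.

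The core of the proof is an approximation-and-blow-up argument. For each $\mu\in(0,\lambda_1^+)$ the operator $F[\cdot]+\mu c(x)(\cdot)$ satisfies a maximum principle by the very definition of $\lambda_1^+$, so \eqref{ExistUnic M bem definido} together with a Schaefer-type fixed-point argument yields a positive $L^p$-viscosity solution $u_\mu$ of
\[ F[u_\mu]+\mu c(x)u_\mu = -1 \;\text{ in }\; \Omega, \qquad u_\mu = 0 \;\text{ on }\; \partial\Omega. \]
Taking $\mu_n\uparrow\lambda_1^+$, I would show $M_n:=\|u_{\mu_n}\|_\infty\to\infty$ by contradiction: otherwise the global $C^{1,\alpha}(\overline\Omega)$ estimates of \cite{regularidade} combined with stability (Proposition \ref{Lpquad}) produce a strictly positive limit $u$ solving $F[u]+\lambda_1^+ c\,u=-1$, and a perturbation argument exploiting both the strictness of this inequality and the boundary behavior $u/d\geq c_0>0$ given by Hopf (Theorem \ref{WHI}) produces a test function in $\Psi^+(F(c),\Omega,\mu')$ for some $\mu'>\lambda_1^+$, contradicting the supremum defining $\lambda_1^+$. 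Normalizing $v_n:=u_{\mu_n}/M_n$ gives $\|v_n\|_\infty=1$, $v_n\geq 0$, and $F[v_n]+\mu_n c\,v_n=-1/M_n\to 0$; the same regularity plus Proposition \ref{Lpquad} extract a $C^1(\overline\Omega)$-limit $\varphi_1^+$ that is an $L^p$-viscosity solution of $F[\varphi_1^+]+\lambda_1^+ c\,\varphi_1^+=0$ with $\|\varphi_1^+\|_\infty=1$; SMP (Theorem \ref{WHI}, tailored for $L^p$ zero-order terms) upgrades $\varphi_1^+$ to be strictly positive, whence $\alpha_1^+:=\lambda_1^+$ and $\varphi_1^+$ form the desired pair. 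The negative pair $(\alpha_1^-,\varphi_1^-)$ is produced by running the same argument for $G(x,r,\vec p,X):=-F(x,-r,-\vec p,-X)$, exploiting $\lambda_1^-(F)=\lambda_1^+(G)$.

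The main obstacle is precisely the unboundedness of $c$: the classical arguments assume $c\in L^\infty$, and moving to $c\in L^p$ forces every invocation of existence, regularity, stability, and strong maximum principle to be drawn from the $L^p$-viscosity toolkit of \cite{regularidade} and the global results of \cite{B2016} (Theorems \ref{LMP}--\ref{WHI}), which are built exactly to accommodate $L^p$ weights. Once the pair is constructed, the identification $\alpha_1^\pm=\lambda_1^\pm$ is immediate: $\varphi_1^+$ itself witnesses $\alpha_1^+\leq\lambda_1^+$ (take $\psi:=\varphi_1^+$ in the definition of $\Psi^+$), while the blow-up step above supplies the reverse inequality by ruling out eigenfunctions for any $\mu>\lambda_1^+$. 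Finally, under \eqref{Hstrong} the $W^{2,p}$ regularity of \cite{regularidade} accommodates $b\in L^p_+(\Omega)$ rather than only $L^\infty_+$, so every step of the construction transfers verbatim and completes the proof.
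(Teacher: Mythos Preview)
The paper does not prove this theorem; it is \emph{recalled} from \cite{regularidade} (see the sentence immediately preceding the statement). The only hint the paper gives about the proof is that Proposition~\ref{th4.1 QB} ``is an auxiliary result for the proof of Theorem~\ref{exist eig for F-c geq 0}''. So there is no ``paper's own proof'' to compare against; what follows is an assessment of your sketch on its own merits and against what the structure of the statement suggests about the argument in \cite{regularidade}.

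Your outline is the standard Berestycki--Nirenberg--Varadhan/Quaas--Sirakov scheme and is broadly right, but there is one concrete gap and one point you gloss over.

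\textbf{The gap.} In the blow-up step you solve $F[u_\mu]+\mu c\,u_\mu=-1$ and, assuming $\|u_{\mu_n}\|_\infty$ stays bounded, pass to a limit $u>0$ with $F[u]+\lambda_1^+ c\,u=-1$; you then want a test function in $\Psi^+(F(c),\Omega,\mu')$ for some $\mu'>\lambda_1^+$. With $c\in L^\infty$ this is immediate: $F[u]+\mu' c\,u=-1+(\mu'-\lambda_1^+)c\,u\le 0$ once $\mu'-\lambda_1^+\le (\|c\|_\infty\|u\|_\infty)^{-1}$. With $c\in L^p\setminus L^\infty$ this fails pointwise, and nothing you mention (Hopf, the boundary ratio $u/d\ge c_0$) salvages it. The standard fix is to change the right-hand side to something proportional to $c$: solve $F[u_\mu]+\mu c\,u_\mu=-c$ (or $-(1+c)$). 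Then in the bounded-limit scenario $F[u]+(\lambda_1^++\varepsilon)c\,u=c(\varepsilon u-1)\le 0$ once $\varepsilon\le \|u\|_\infty^{-1}$, and the contradiction goes through. As written, your perturbation step does not work.

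\textbf{The point you gloss over.} Your construction, if it works, yields an eigenfunction \emph{exactly at} $\lambda_1^+$, hence $\alpha_1^+=\lambda_1^+$ automatically, without \eqref{Hstrong}. Yet the theorem carefully separates the two conclusions: existence of \emph{some} $\alpha_1^\pm$ with $b,d\in L^\infty_+$, and the identification $\alpha_1^\pm=\lambda_1^\pm$ only under \eqref{Hstrong}. This, together with the paper's remark that Proposition~\ref{th4.1 QB} (which needs one of the two functions to be \emph{strong}) is an auxiliary tool, strongly suggests the argument in \cite{regularidade} does not run your scheme directly on $c$. More likely it truncates $c$ to $c_k=\min(c,k)\in L^\infty$, applies the bounded-weight theory to produce $(\alpha_{1,k}^\pm,\varphi_{1,k}^\pm)$, and passes to the limit to get $(\alpha_1^\pm,\varphi_1^\pm)$; the identification $\alpha_1^\pm=\lambda_1^\pm$ then uses the simplicity-type statement of Proposition~\ref{th4.1 QB}, which in turn needs $\varphi_1^\pm\in W^{2,p}$, i.e.\ \eqref{Hstrong}. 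Your last paragraph treats \eqref{Hstrong} only as the device allowing $b\in L^p_+$, and does not engage with why the identification itself requires it.
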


Of course, Pucci's extremal operators
$\mathcal{L}^\pm $, with $b\in L^p_+(\Omega)$,
are examples of $F$ which satisfy \eqref{Hstrong}. Such existence results for $\mathcal{L}^\pm$ are used several times in the text.

The following proposition for unbounded $c$  is both an auxiliary result for the proof of Theorem \ref{exist eig for F-c geq 0} and  an important tool for proving nonexistence results for equations in  nondivergence form.

\begin{prop}\label{th4.1 QB} Let $u,v\in C(\overline{\Omega})$ be $L^p$-viscosity solutions of
\begin{align}\label{eq th1.4 2case ineq}
\left\{
\begin{array}{rclcc}
F[u]+c(x)u &\geq & 0 & \mbox{in} &\Omega \\
u &<& 0& \mbox{in} & \Omega
\end{array}
\right. ,\quad
\left\{
\begin{array}{rcll}
F[v]+c(x)v &\leq & 0 &\mbox{in} \;\;\; \Omega \\
v &\geq & 0 & \mbox{on} \;\; \partial\Omega\\
v(x_0) &< & 0  &x_0 \in\Omega
\end{array}
\right.
\end{align}
with $F$ as above, $c\in L^p(\Omega)$, $p>n$. Suppose one, $u$ or $v$, is a strong solution. Then, $u=tv$ for some $t>0$.
The conclusion is the same if $F[u]+c(x)u\leq 0$, $F[v]+c(x)v\geq 0 $ in $\Omega$, with $u>0$ in $\Omega$, $v\leq 0$ on $\partial\Omega$ and $v(x_0)>0$ for some $x_0 \in\Omega$.
\end{prop}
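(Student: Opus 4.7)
I focus on the first case; the second is analogous upon reversing signs. The plan is to identify an optimal scaling $t^* > 0$ and then to use the strong maximum principle on the nonnegative difference $W := t^*v - u$ to force proportionality of $u$ and $v$.

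First, the open set $\Omega^- := \{x \in \Omega : v(x) < 0\}$ is nonempty (it contains $x_0$), and on $\Omega^-$ both $u$ and $v$ are strictly negative, so the ratio $u/v$ is positive and continuous there. Set
\[
t^* := \inf_{x \in \Omega^-} \frac{u(x)}{v(x)}.
\]
Near the interior portion $\partial\Omega^- \cap \Omega$, $v \to 0^-$ while $u$ stays bounded away from $0$ (since $u < 0$ strictly in $\Omega$), so $u/v \to +\infty$. This shows $t^* > 0$ and forces any minimizing sequence to accumulate either at an interior point $x^* \in \Omega^-$ or at $\partial\Omega \cap \overline{\Omega^-}$ where both functions vanish. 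In the latter case, the global $C^{1,\alpha}$ regularity coming from the hypothesis that one of $u, v$ is strong, together with Hopf, lets me realize $t^*$ as a ratio of interior normal derivatives at a boundary point. In either case I obtain a ``touching point'' $x^*$ at which $u = t^* v$.

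Next I combine $F[u] + cu \ge 0$ with $F[t^*v] + c\,t^*v \le 0$ (using the $1$-homogeneity \eqref{homogeneity} so that $F[t^*v] = t^*F[v]$) to obtain $F[u] - F[t^*v] \ge cW$ in the $L^p$-viscosity sense. By \eqref{SCG} with $r = u \le t^*v = s$ (so $(r-s)^+ = 0$ and $(s-r)^+ = W$),
\[
F[u] - F[t^*v] \le \mathcal{M}^+(-D^2 W) + b\,|DW| + d\,\omega(W) = -\mathcal{M}^-(D^2 W) + b\,|DW| + d\,\omega(W).
\]
Chaining these two estimates, and using that $\omega$ is Lipschitz with $\omega(0)=0$, so $\omega(W) \le L W$, yields
\[
\mathcal{M}^-(D^2 W) - b\,|DW| + (c - Ld)\,W \le 0 \quad \text{in } \Omega^-,
\]
i.e.\ $W \ge 0$ is an $L^p$-viscosity supersolution of an operator of the form $\mathcal{L}^-[W] - \tilde c\, W \le 0$ with bounded coefficient $\tilde c$. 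Applying the SMP (Theorem~\ref{WHI}), and Hopf at $x^*$ if $x^* \in \partial\Omega$, on the connected component $U$ of $\Omega^-$ containing $x^*$, I conclude $W \equiv 0$ on $U$, i.e.\ $u \equiv t^* v$ on $U$.

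Finally, any boundary point $y \in \partial U \cap \Omega$ would satisfy $v(y) = 0$ and, by continuity of the equality $u = t^*v$ on $U$, also $u(y) = 0$, contradicting $u < 0$ in $\Omega$. Thus $U$ is clopen in $\Omega$ and, by connectedness, $U = \Omega$; in particular $\Omega^- = \Omega$ and $u = t^*v$ throughout $\Omega$, with $t^* > 0$. The main obstacle is the first step: making sure the infimum $t^*$ is actually attained (at an interior point or, via Hopf, at a suitable boundary point of $\Omega$). This is precisely where the hypothesis that one of $u, v$ be a strong solution enters crucially, as it upgrades the relevant $C^{1,\alpha}$ regularity up to $\partial\Omega$ and legitimizes the subtraction in the computation of the elliptic inequality for $W$.
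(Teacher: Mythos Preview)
The paper does not prove this proposition; it is quoted from \cite{QB} (with the extension to unbounded $c\in L^p$ covered in \cite{regularidade}, \cite{tese}), as the internal label ``th4.1 QB'' indicates. Your strategy---extremal ratio $t^*$, elliptic inequality for $W=t^*v-u$, then SMP---is precisely the classical one used in those references.

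There is, however, a real gap in your boundary step. You claim that ``the global $C^{1,\alpha}$ regularity coming from the hypothesis that one of $u,v$ is strong'' lets you compare normal derivatives at $\partial\Omega$ and thereby realize and attain $t^*$. But the strong assumption yields $W^{2,p}\hookrightarrow C^{1,\alpha}$ for \emph{that one} function only; the other is merely an $L^p$-viscosity sub- or supersolution, which in general is not $C^1$ up to the boundary. Concretely, if $u$ is the strong one and $v$ is not, you have no bound of the type $v^-\le Cd$ near $\partial\Omega$, so neither $t^*>0$ nor the boundary-touching alternative follows from your argument. The way this is actually handled in \cite{QB} (following \cite{BNV}) is via the maximum principle in domains of small measure: fix a thin collar $\mathcal N_\delta=\{d<\delta\}$ in which MP holds for $\mathcal L^- +c$; on the compact set $\{d\ge\delta\}$ one gets $W_t\ge0$ for $t$ slightly beyond $t^*$ by uniform positivity; then MP on $\mathcal N_\delta$ (using $W_t\ge0$ on $\partial\mathcal N_\delta$ and the inequality $\mathcal L^-[W_t]+cW_t\le0$, which is valid on $\{W_t<0\}$ without any sign assumption on $W_t$) forces $W_t\ge0$ everywhere, contradicting the extremality of $t^*$ unless there is an \emph{interior} touching point. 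The same device, run at $t=0$, gives $t^*>0$. No $C^1$ information on the non-strong function is needed. The strong hypothesis is used only to justify the subtraction that produces the viscosity inequality for $W$---a role you did identify correctly.
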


A consequence of the proof of our Claim \ref{AMP} is an improved version of the \textit{anti-maximum principle} \cite{Arms2009}. We state it for the sake of completeness. Consider the problem
\begin{align}\label{Flambda}
\textrm{$F[u]+\lambda c(x) u =f(x)$ in $\Omega$, \;\;\;$u=0$ on $\partial\Omega$.}
\end{align}
Recall that solutions of this problem are at least $C^{1,\alpha}$ up to the boundary for $\Omega\in C^{1,1}$.

\begin{cor}
Let $f\in L^p(\Omega)$, with $p>N$ and $f\gneqq 0$. Then then there exists $\varepsilon_0 >0$ such that any solution $u$ of \eqref{Flambda}, with $\lambda\in (\lambda_1^-(F(c),\Omega),\lambda_1^-(F(c))+\varepsilon_0)$, satisfies $u<0$ in $\Omega$.
An analogous result holds if $f\lneqq 0$, related to $\lambda_1^+(F(c),\Omega)$ and positive solutions.
\end{cor}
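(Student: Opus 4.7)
The plan is to mimic exactly the argument used in Claim \ref{AMP}, only replacing $\mathcal{L}^+$ by the more general operator $F$, and the forcing term $1+h(x)$ by a generic $f\gneqq 0$. I will treat only the statement involving $\lambda_1^-$; the companion statement follows by applying it to $G(x,r,\vec p,X)=-F(x,-r,-\vec p,-X)$, whose weighted eigenvalues are swapped with those of $F$.

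Assume for contradiction that there exists a sequence $\lambda_k\downarrow \lambda_1^-$ and $L^p$-viscosity solutions $u_k$ of \eqref{Flambda} with data $\lambda_k$ and $f$, such that $\max_{\overline{\Omega}} u_k\ge 0$. Since $f\gneqq 0$, $u_k\not\equiv 0$, and because $u_k=0$ on $\partial\Omega$ the maximum is attained at some interior point $x_k\in\Omega$ with $Du_k(x_k)=0$; pass to a subsequence so that $x_k\to x_0\in\overline{\Omega}$. The proof now splits according to whether $\|u_k\|_\infty$ is bounded or not.

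First, suppose $\|u_k\|_\infty\le C$ uniformly. By the global $C^{1,\alpha}$ regularity provided by \eqref{SC} (or \eqref{SCG}) and compact embedding, a subsequence converges in $E$ to some $u\in E$, and stability (Proposition \ref{Lpquad}, applied with $F_k=F$ and $f_k=f-(\lambda_k-\lambda_1^-)c(x)u_k$) yields that $u$ is an $L^p$-viscosity solution of $F[u]+\lambda_1^- c(x) u=f$ in $\Omega$, $u=0$ on $\partial\Omega$. If $u\ge 0$ in $\Omega$, SMP forces $u>0$, hence $\lambda_1^-\le \lambda_1^+$ by the very definition of $\lambda_1^+$; combined with \eqref{lambda1+ leq lambda1-} we would obtain $\lambda_1^+=\lambda_1^-$, and Proposition \ref{th4.1 QB} applied to $u$ and $\varphi_1^+$ would give $u=t\varphi_1^+$ for some $t>0$, contradicting $F[u]+\lambda_1^-c(x)u=f\not\equiv 0$. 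Hence $u$ must be negative at some interior point; Proposition \ref{th4.1 QB} applied to $u$ and $\varphi_1^-$ then forces $u=t\varphi_1^-$ for some $t>0$, again contradicting $f\not\equiv 0$.

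Second, suppose along a subsequence $\|u_k\|_\infty\to\infty$. Set $v_k:=u_k/\|u_k\|_\infty$; by 1-homogeneity \eqref{homogeneity}, $v_k$ solves $F[v_k]+\lambda_k c(x)v_k=f/\|u_k\|_\infty$ in $\Omega$, $v_k=0$ on $\partial\Omega$, with $\|v_k\|_\infty=1$. By $C^{1,\alpha}$ bounds and stability, up to a subsequence $v_k\to v$ in $E$ with $\|v\|_\infty=1$ and $F[v]+\lambda_1^- c(x)v=0$ in $\Omega$, $v=0$ on $\partial\Omega$. If $v(x_1)<0$ for some $x_1\in\Omega$, Proposition \ref{th4.1 QB} gives $v=\varphi_1^-$; but passing to the limit in $v_k(x_k)\ge 0$ and $x_k\to x_0$, together with $v=0$ on $\partial\Omega$, forces $v(x_0)=0$ with $x_0\in\partial\Omega$, and Hopf applied to $\varphi_1^-$ contradicts $Dv_k(x_k)=0$. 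Hence $v\ge 0$, and SMP plus the definition of $\lambda_1^+$ together with \eqref{lambda1+ leq lambda1-} again force $\lambda_1^-=\lambda_1^+$ and $v=\varphi_1^+$. Hopf at the boundary gives $\partial_\nu v>0$ on $\partial\Omega$, so that $v_k>0$ in $\Omega$ for all large $k$. But then $v_k$ is a positive solution of $F[v_k]+\lambda_1^+ c(x)v_k\lneqq 0$, and Proposition \ref{th4.1 QB} compared with $\varphi_1^+$ forces $v_k=t_k\varphi_1^+$, contradicting the strict inequality in the right hand side.

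The main obstacle I expect is the bookkeeping when $F$ is merely $L^p$-viscosity (not necessarily convex or strong), to ensure that stability applies cleanly on both sides of every inequality and that Proposition \ref{th4.1 QB} is legitimately invoked (its hypothesis requires one of the two compared functions to be strong, which is guaranteed by \eqref{Hstrong} for the eigenfunctions $\varphi_1^\pm$ in Theorem \ref{exist eig for F-c geq 0}). Once this is checked, both cases close by the same contradiction mechanism as in Claim \ref{AMP}, giving the desired $\varepsilon_0>0$.
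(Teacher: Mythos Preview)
Your proposal reproduces the argument of Claim~\ref{AMP} with $\mathcal{L}^+$ replaced by $F$ and $1+h$ by $f$, which is exactly what the paper means when it says the corollary ``is a consequence of the proof of our Claim~\ref{AMP}''; the case split into bounded and unbounded norms, the passage to the limit via $C^{1,\alpha}$ estimates and stability, and the successive applications of Proposition~\ref{th4.1 QB} against $\varphi_1^\pm$ mirror the paper line by line.

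One step needs to be tightened. In both the bounded sub-case $u\ge 0$ and the unbounded sub-case $v\ge 0$ you invoke \eqref{lambda1+ leq lambda1-} to upgrade the inequality $\lambda_1^-\le\lambda_1^+$ (obtained from the definition of $\lambda_1^+$) to the equality $\lambda_1^-=\lambda_1^+$. But \eqref{lambda1+ leq lambda1-} is stated in the paper only for $\mathcal{L}^+$, and its justification there rests on the convexity of $\mathcal{L}^+$. The corollary, however, is formulated in Section~\ref{section eigenvalue} for a general operator satisfying \eqref{SCG}, \eqref{ExistUnic M bem definido}, \eqref{Hstrong}, \eqref{homogeneity}, with no convexity assumption, so the ordering $\lambda_1^+\le\lambda_1^-$ is not available a priori. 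You correctly anticipate in your closing paragraph that nonconvexity may cause trouble; this is precisely where it bites. The paper's Claim~\ref{AMP} relies on the same step, so this loose end is inherited rather than introduced by you---but rather than citing \eqref{lambda1+ leq lambda1-} as if it applied, you should either add convexity (equivalently $\lambda_1^+\le\lambda_1^-$) as an explicit hypothesis of the corollary, or supply an alternative treatment of those two sub-cases that avoids comparing the two principal eigenvalues.
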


\smallskip

We finally turn to the main result of this section, concerning existence for the Dirichlet problem. This result is needed, for instance, to ensure existence of solutions of \eqref{Q}.
We give a proof of it in the sequel, following the ideas of \cite{Arms2009, siam2010}, in the context of $L^p$-viscosity solutions, for fully nonlinear equations with unbounded coefficients.

For ease of notation, we will be omitting the information $(F(c),\Omega)$ each time in what follows.
Consider $\lambda_1:=\max\{ \lambda_1^+, \lambda_1^-\}$.
Then define, as in \cite{Arms2009}, the following quantity
\begin{align*}
\lambda_2 (F(c),\Omega):=\inf \{ \rho > \lambda_1 \textrm{ such that $\rho$ is an eigenvalue of $F$ in $\Omega$, with weight $c$}\}.
\end{align*}
Notice that $\lambda_2(F(c),\Omega)= +\infty$ is possible.

\begin{teo}\label{th Exist>lambda1}
Assume \eqref{SCG},  \eqref{ExistUnic M bem definido}, \eqref{Hstrong}, and \eqref{homogeneity}.
Let $f\in L^p (\Omega)$, with $p> N$, and let $\lambda_1 <\lambda < \lambda_2$.
Then there exists a strong solution of the Dirichlet problem \eqref{Flambda}.
\end{teo}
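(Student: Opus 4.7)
My plan is to combine a Leray--Schauder degree argument with a priori bounds obtained by a blow-up/contradiction scheme, in the spirit of \cite{Arms2009, siam2010} adapted to the $L^p$-viscosity framework.

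I would first recast \eqref{Flambda} as a fixed-point problem on $E = C^1(\overline{\Omega})$. By \eqref{ExistUnic M bem definido} and \eqref{Hstrong}, for every $g \in L^p(\Omega)$ there is a unique strong solution of $F[v] = g$ in $\Omega$, $v = 0$ on $\partial\Omega$; the global $C^{1,\alpha}$ regularity of \cite{regularidade} and compact embedding then make the operator $T_\lambda : E \to E$, given by $T_\lambda u := v$ where $v$ solves $F[v] = -\lambda c(x) u + f(x)$ in $\Omega$ with zero Dirichlet data, completely continuous. Its fixed points are precisely the strong solutions of \eqref{Flambda}.

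The decisive step is a uniform a priori bound: there is $R > 0$ such that every strong solution of $F[u]+\lambda c(x) u = t f(x)$, $u|_{\partial\Omega}=0$, with $t\in [0,1]$, satisfies $\|u\|_\infty \le R$. I argue by contradiction. If a sequence $u_k$ with parameters $t_k \in [0,1]$ violates this with $M_k := \|u_k\|_\infty \to \infty$, set $w_k := u_k/M_k$. The $1$-homogeneity \eqref{homogeneity} gives
\begin{equation*}
F[w_k]+\lambda c(x)w_k = \frac{t_k\, f}{M_k} \;\; \text{in } \Omega, \qquad w_k = 0 \;\; \text{on } \partial\Omega, \qquad \|w_k\|_\infty = 1,
\end{equation*}
whose right-hand side tends to zero in $L^p(\Omega)$. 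Global $C^{1,\alpha}$ estimates and Arzel\`a--Ascoli extract a subsequence $w_k \to w$ in $E$ with $\|w\|_\infty = 1$; stability of $L^p$-viscosity solutions (cf.\ Proposition \ref{Lpquad}) then yields $F[w] + \lambda c(x) w = 0$ in $\Omega$, $w = 0$ on $\partial\Omega$. Hence $\lambda$ would be an eigenvalue of $F$ with weight $c$, contradicting $\lambda \in (\lambda_1, \lambda_2)$ by the very definition of $\lambda_2$.

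With the uniform bound in hand, I deploy the Leray--Schauder degree along the homotopy $F[v] = -\lambda c(x) u + t f(x)$, $t \in [0,1]$, on a ball $B_R$ containing all solutions; admissibility is guaranteed by the previous step, so $\deg(I - T_\lambda, B_R, 0) = \deg(I - T_\lambda^{(0)}, B_R, 0)$, where $T_\lambda^{(0)}$ is the positively $1$-homogeneous, compact map $u \mapsto v$ solving $F[v] = -\lambda c(x) u$, $v|_{\partial\Omega}=0$. Since $\lambda$ is not an eigenvalue, the only fixed point of $T_\lambda^{(0)}$ is $0$, and the degree reduces to the local index at this isolated zero. The main obstacle will be showing this index is nonzero: I plan to follow the Krasnoselski-type computation of \cite{Arms2009}. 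Eigenvalues $t > 1$ of $T_\lambda^{(0)}$ correspond, via the relation $t \leftrightarrow \lambda/t$ forced by homogeneity, to eigenvalues of $F$ with weight $c$ in $(0,\lambda)$, which by the choice $\lambda \in (\lambda_1, \lambda_2)$ are exactly the signed principal ones $\lambda_1^\pm$. Their simplicity (Proposition \ref{th4.1 QB} and Theorem \ref{exist eig for F-c geq 0}) together with the index formula for positively $1$-homogeneous compact operators yields a multiplicative contribution $(-1)^2 = 1 \neq 0$. The resulting nonzero Leray--Schauder degree provides a fixed point of $T_\lambda$, i.e.\ a strong solution of \eqref{Flambda}.
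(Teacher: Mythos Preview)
Your overall architecture --- recast \eqref{Flambda} as a fixed point in $E$, obtain a priori bounds by blow-up and stability, then compute a Leray--Schauder degree --- is correct and is also the skeleton of the paper's proof. The difference lies in \emph{where} the homotopy is placed and, consequently, in how the degree is actually computed.

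The paper homotopes the operator itself, setting $F_\tau[u] = \tau F[u] + (1-\tau)\Delta u$, and proves that $\tau \mapsto \lambda_1^\pm(F_\tau(c),\Omega)$ is continuous while $\tau \mapsto \lambda_2(F_\tau(c),\Omega)$ is lower semicontinuous. This permits a continuous path $\mu_\tau$ staying strictly between $\lambda_1(F_\tau)$ and $\lambda_2(F_\tau)$ and hitting $\lambda$ at the endpoint $\tau=1$; the degree of $I-\mathcal{A}_\tau$ is then read off at the Laplacian end from classical linear Fredholm theory. Your route instead keeps $F$ fixed and homotopes the forcing $tf$, which correctly reduces the question to the index of the positively $1$-homogeneous compact map $T_\lambda^{(0)}$ at $0$. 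Your blow-up a priori bound is clean and uses the definition of $\lambda_2$ in a transparent way.

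The soft spot is your last paragraph. The formula ``index $=(-1)^2$'' is the \emph{linear} Leray--Schauder count; for a merely positively $1$-homogeneous nonlinear operator there is no such black box, and simplicity of $\lambda_1^\pm$ in the sense of Proposition~\ref{th4.1 QB} does not by itself produce an algebraic multiplicity to feed into a Krasnoselskii formula. In \cite{Arms2009} the index of $I-T_\lambda^{(0)}$ at $0$ is obtained precisely by deforming $F$ to the Laplacian and tracking the eigenvalue gap along the deformation (this is the content of Lemma~5.8 and Propositions~5.5--5.9 there). So when you ``follow the Krasnoselski-type computation of \cite{Arms2009}'' you will in fact be executing the paper's argument --- the homotopy $F_\tau$, the continuity/lsc of eigenvalues, and the reduction to the Laplacian --- just deferred to the final sentence rather than made explicit from the start. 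Your proposal is therefore correct, but its apparent economy over the paper's proof is illusory: the work you postpone is exactly the work the paper writes out.
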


\begin{proof}
We define
$
F_\tau [u] = \tau F [u] + ( 1 - \tau ) \Delta u
$
for $u\in E$, which satisfies \eqref{SCG},  \eqref{ExistUnic M bem definido}, \eqref{Hstrong}, and \eqref{homogeneity}. Then, from Theorem \ref{exist eig for F-c geq 0}, we write
$\lambda^-_\tau = \lambda_1^- (F_\tau(c),\Omega)$, associated to
$\varphi_\tau =\varphi_1^-(F_\tau (c), \Omega)$, which is such that $\varphi_\tau\le 0$ and $\|\varphi_\tau \|_\infty =1$,  for all $\tau\in [0, 1]$.

We first claim that the function $\tau \mapsto \lambda^-_\tau $ is continuous in the interval $[0, 1]$. Indeed, let $\tau_k \in [0,1]$, $\tau_k\rightarrow \tau_0$.  Hence it follows that the sequence $\lambda_{\tau_k}^-$ is bounded, by the same procedure done in the proof of Theorem 5.2 in \cite{regularidade}.
So, passing to a subsequence, we can say that $\lambda_{\tau_k}^- \rightarrow \lambda_0$ for some $\lambda_0$.
Then, by $C^{1,\alpha}$ estimates, compactness argument and stability, we obtain a solution $\varphi_0\in E$ of \eqref{Flambda} with  $\lambda=\lambda_0$. Notice that $\varphi_0\le 0$ and $\|\varphi_0\|_{\infty}=1$.
By the simplicity of the eigenvalues (which is true under hypothesis \eqref{Hstrong}, see \cite{regularidade}), we have $\lambda_0=\lambda^-_{\tau_0}$, and so the continuity follows.
Analogously, $\tau \mapsto \lambda^+_\tau $ is continuous, where
$\lambda^+_\tau = \lambda_1^+ (F_\tau (c),\Omega)$.

On the other hand, we infer that the map $\tau \mapsto \bar{\lambda}_\tau $, given by  $\bar{\lambda}_\tau = \lambda_2 (F_\tau (c), \Omega)$, is lower semicontinuous; and therefore, for each $\lambda\in (\lambda_1,\lambda_2)$, we guarantee the existence a continuous function $\mu_\tau$ in $[0,1]$ satisfying
$\mu_0 = \lambda$, and
$\lambda_\tau \leq \mu_\tau \leq \bar{\lambda}_\tau$, for all $\tau\in [0,1]$,
Here, $\lambda_\tau = \max \, ( \lambda^-_\tau , \lambda^+_\tau )$.
In fact, this is accomplished by using arguments similar those in Propositions 5.5 and 5.6 of \cite{Arms2009} -- the slight differences have already appeared in the proof of Claim \ref{AMP}.

Next we define the operator $\mathcal{A}_\tau \,: E\rightarrow E$ which takes a function $u$ into $\mathcal{A}_\tau u =U$, where $U$ is the unique $L^p$-viscosity solution of the problem
\begin{align*}
F_\tau\, [U] = \mu_\tau c(x)u +f(x) \textrm{ in } \Omega, \quad U=0 \textrm{ on } \partial\Omega.
\end{align*}
Of course $\mathcal{A}_\tau$ is completely continuous, for all $\tau \in [0,1]$.
In particular, by $C^{1,\alpha}$ estimates in \cite{regularidade}, it follows that
$
\|\mathcal{A}_\tau\|_{E} \leq C \{ \,\|\mu_\tau\|_{L^{\infty}[0,1]} \, \|c\|_{L^p} \|u\|_\infty +\|f\|_{L^p}+1\, \} \leq  C_0 (1+\|u\|_{\infty}).
$
Now the conclusion is just a combination of topological arguments and Fredholm theory for the Laplacian operator, cf.\ Lemma 5.8, Proposition 5.9 and Theorem 2.4 in \cite{Arms2009}, over the space $E$.
\end{proof}

\section*{Acknowledgments}
Part of this work was done during the visit of the second author to the Pontifícia Universidade Católica do Rio de Janeiro. She would like to thank all the members of the Department of Mathematics for their warm hospitality.

\end{document}